 \newcommand {\Star} {{\mathrm Star}}
 \newcommand {\conv} {{\mathrm conv}}
 \newcommand {\Z} {{\mathbb Z}}
 \newtheorem{lemma}[subsection]{Lemma}
\newcommand{\entrylabel}[1]{\mbox{\textsf{{\rm c}1}}\hfil}
{\end{list}}
{
   \newtheorem{theorem}{Theorem}[subsection]
   \newtheorem{proposition}[theorem]{Proposition}
   \newtheorem{corollary}[theorem]{Corollary}

}
{\theoremstyle{definition}
   
   \newtheorem{example}[theorem]{Example}
   \newtheorem{definition}[theorem]{Definition}
}
{\theoremstyle{remark}
   \newtheorem*{remark}{Remark}
   
}
\newcommand{\QQ}{{\mathbb{Q}}}
\newcommand{\CC}{{\mathbb{C}}}
\newcommand{\bP}{{\mathbb{P}}}
\newcommand{\bA}{{\mathbf{A}}}
\newcommand{\cI}{{\mathcal I}}
\newcommand{\cO}{{\mathcal O}}
\newcommand{\res}{{\operatorname{res}}}
\newcommand{\inte}{{\operatorname{int}}}
\newcommand{\Link}{\operatorname{Link}}
\newcommand{\Spec}{\operatorname{Spec}}
\newcommand{\dar}{\downarrow}
\newcommand{\uar}{\uparrow}
\newcommand{\Reg}{{\operatorname{Reg}}}
\begin{document}


\title[Equisingular resolution with SNC fibers and combinatorial type of varieties ]{Equisingular resolution with SNC fibers and \\ combinatorial type of varieties}

\author{ {Jaros{\l}aw W{\l}odarczyk}}

\thanks{W\l odarczyk was supported in part by NSF grant DMS-0100598 and  and BSF grant 2014365}


\address{J. Wlodarczyk, Department of Mathematics\\Purdue University\\West
Lafayette, IN-47907\\USA}
\email{wlodarcz@purdue.edu, wlodar@math.purdue.edu}

\date{\today}
\begin{abstract} 
We introduce the notion of combinatorial type of  varieties $X$ which generalizes the concept of the dual complex of SNC divisors. It is a unique, up to homotopy, finite simplicial complex $\Sigma(X)$ which is  functorial with respect to  morphisms of varieties.
Its cohomology $H^i(\Sigma(X),Q)$ for complex projective varieties coincide with  weight zero part of the Deligne filtration $W_0(H^i(X,Q))$.
The notion can be understood as a topological measure of the singularities of algebaric schemes of finite type.

We  also prove  that any variety in characteristic zero admits the Hironaka desingularization with all fibers having SNC. Moreover the dual complexes of the fibers are isomorphic on strata.
Also for any morphism $f:X\to Y$ there exists a similar  desingularization $\tilde{X}\to X$ for which the induce morphism $\tilde{X}\to Y$ has SNC fibers.

 One of the consequence is that for any projective morphism $f:X\to Y$ the combinatorial type  of the fiber is a constructible function. In particular $\dim(W_0H^i(f^{-1}(y))$ is constructible.

\end{abstract}
\maketitle

\bigskip

\tableofcontents
\addtocounter{section}{-1}

\section{Introduction} 

One of the purpose of the paper is to extend the Hironaka desingularization theorems in order to control the fibers of resolutions and more generally morphisms between varieties. In particular if $D$ is a SNC  divisor
associated to a resolution of an  isolated singularity then  it was obseved independently by
Kontsevich, Soibelman \cite[A.4]{ks}, 
Stepanov \cite{step2}, and others  that the homotopy type of the
 dual complex associated with $D$ is an invariant
for the singularity (\cite{ks}). More generally   Thuillier \cite{th} and 
Payne \cite{payne} proved  homotopy  invariance results for  boundary divisors.  
In characteristic zero, all these results can be derived from   the weak factorization theorem of W\l
odarczyk [Wlo], and Abramovich-Karu-Matsuki-W\l odarczyk
[AKMW]. A refinement of factorization theorem gives a more general version of the homotopy invariance for varieties with SNC, and allows to study  lower dimensional fibers of  morphisms. 
In particular, in the paper \cite{abw} we show that the invariants of the dual complexes of the fibers of the resolution can be defined for arbitrary varieties. Moreover they can be computed directly when using fibers with SNC crossings of smaller dimension. This raises a question of existence of  resolution with SNC fibers. In this paper we show that such a resolution can be constructed. Moreover one can associate with it   a stratification, with points in the strata defining isomorphic dual complexes.
In this sense it can be considered as a version of equisingular resolution. On the other hand if one considers an arbitrary morphism $f:X\to Y$ the method allows to desingularize the variety $X$ and the fibers of the morphism transforming them into SNC varieties. Since, in general, we cannot eliminate SNC singularities of the fibers  the theorem can be considered as the desingularization of the fibers of the morphism. The proofs  rely on Hironaka desingularization thorems and, introduced here, notion of SNC morphism  generalizing smooth morphisms.

The considerations of the dual complexes associated with SNC fibers of desingularizations naturally lead to the idea of extension of the concept of the dual complex and placing it in a more general context. We introduce here the notion of {\it combinatorial type} which allows to approach  the theory of singularities from the topological perspective. The combinatorial type $\Sigma(X)$ is a roughly a homotopy type of topological space (simplicial complex) which is assigned to an algebraic scheme $X$  and which reflects the singularities of the scheme. Moreover the notion, in fact defines, a functor from the category of (quasiprojective) algebraic schemes of finite type to the homotopy category of topological spaces. In the particular case of  SNC divisors the combinatorial type is the homotopy type of its dual complex. On the other hand, in the case of complex projective varieties, the rational cohomology $H^i(\Sigma(X),\QQ)$ of the combinatorial type coincides with weight zero part of the Deligne filtration $W_0H^i(X,\QQ)$.

The paper is organized as follows. In the first  section we briefly formulate the Hironaka desingularization theorems in their stronger form which  are used frequently in the remaining part  of the paper
. In Section 2 we briefly discuss  the language of varieties  with SNC and and associated simplicial complexes. One of the main tool used in the considerations is , introduced here, operation of cone extension. It generalizes the operation of star subdivision and can be conveniently used for natural modification of dual complexes  associated with (embedded) blow-ups of SNC varieties at compatible SNC centers. (One should mention that the case of varieties with SNC differs quite significantly from the case of SNC divisors.)
 In
Section 3 we introduce the notion of combinatorial type of varieties and associated morphisms between them, and study their basic properties.
In Section 4 we intoduce the notion of SNC morphism and prove the analog of generic smoothness theorem for morphisms of SNC varieties. In section 5 we study \'etale trivialization  of SNC morphisms. In section 6 we prove the desingularization theorems with SNC fibers, and some of their consequences. In particular we show that the combinatorial type of the fibers of a projective morphism is constant on  strata.
We note that in this paper the term ``scheme'' refers to a  scheme of  finite type over a ground field $K$ of characteristic zero. A
 variety is a reduced scheme.

\section{Formulation of the Hironaka  resolution  theorems}

In the paper we are going to use and generalize the following Hironaka desingularization theorems in the strongest form. 
(see \cite{Hir},\cite{BM2},\cite{Vi},\cite{Wlo3}). Observe that in the actual Hironaka algorithm we have no control on the fibers of the morphisms and it seems rather difficult to alter the steps of the algorithm to ensure the SNC condition on the fibers.
Throughout the paper we shall work over a ground field $K$ of characteristic zero. \begin{enumerate}

\item{\bf Strong Hironaka Resolution of Singularities}
\begin{theorem} \label{th: 3} Let $Y$ be an algebraic variety over a field of characteristic zero.

There exists a canonical desingularization of $Y$ that is
a smooth variety $\widetilde{Y}$ together with a projective birational morphism $\res_Y: \widetilde{Y}\to Y$ such that
\begin{enumerate}
\item  $\res_Y$ is a composition  of blow ups $Y=Y_0\leftarrow Y_1\leftarrow\ldots\leftarrow \widetilde{Y}$ with smooth centers disjoint from the set of regular points  $Reg(Y)$ of $Y$.

\item $\res_Y$ is functorial with respect to smooth morphisms.
For any smooth morphism $\phi: Y'\to Y$ there is a natural lifting $\widetilde{\phi}: \widetilde{Y'}\to\widetilde{Y}$ which is a smooth morphism. 

Moreover the centers of blow ups are defined by the liftings of the centers of blow-ups

\item $\res_Y: \widetilde{Y}\to Y$ is  an isomorphism over the nonsingular part of $Y$. Moreover $\res_Y$ is equivariant with respect to any group action not necessarily preserving the ground field.
\end{enumerate}
\end{theorem}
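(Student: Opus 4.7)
The plan is to follow the standard algorithmic approach to resolution in characteristic zero, reducing the global statement to a canonical embedded desingularization of marked ideals on a smooth ambient variety. First I would, locally, embed $Y$ as a closed subscheme of a smooth $W$ and consider the marked pair $(\mathcal{I}_Y,1)$; resolving $Y$ is equivalent to producing a sequence of blow-ups along smooth centers (transverse to the accumulating exceptional divisor) after which the controlled transform of $\mathcal{I}_Y$ has maximal order strictly less than $1$, i.e.\ the ideal becomes the unit ideal along the strict transform of $Y$. The global construction is obtained by gluing, which is made legitimate by the canonicity (functoriality under open immersions) of the local algorithm.

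The engine is an upper semicontinuous local invariant $\inv_x$ with values in a well-ordered set, built lexicographically: at each stage one takes the maximal order of the current ideal, chooses a hypersurface of maximal contact (whose existence relies crucially on characteristic zero, via the theory of derivatives), passes to the coefficient ideal on it, and iterates, interleaving contributions from the old exceptional components. The key claims are that $\inv$ is upper semicontinuous, takes finitely many values on $W$, that its maximum locus is smooth and meets the exceptional divisor transversally, and that $\inv_x$ is trivial at $x \in \Reg(Y)$, so the maximum locus is automatically disjoint from $\Reg(Y)$.

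The inductive step is to blow up the maximum locus of $\inv$, form the controlled transform of the marked ideal, and update the exceptional divisor. The main obstacle (the technical heart of the argument) is the compatibility statement: after the blow-up the maximum value of $\inv$ strictly decreases in the lexicographic well-order. This requires that maximal contact, coefficient ideals, and derivatives all transform correctly under blowing up the chosen center, handled by Hironaka's Glueing Lemma and the behaviour of derivatives under admissible blow-ups in characteristic zero. Given the strict drop, termination follows by Noetherian induction on $W$ together with well-foundedness of the value set of $\inv$, yielding the projective birational morphism $\res_Y \colon \widetilde{Y} \to Y$ of item (1).

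Items (2) and (3) are then consequences of the intrinsic nature of $\inv_x$. Orders of ideals, derivatives, maximal contact, and coefficient ideals are all preserved by smooth base change (since a smooth morphism is flat and has smooth fibers, hence preserves orders and transversality with SNC divisors); therefore both the invariant and the resulting maximum locus pull back correctly under any smooth $\phi\colon Y' \to Y$. The lifting $\widetilde{\phi}\colon \widetilde{Y'} \to \widetilde{Y}$ is constructed blow-up by blow-up from this compatibility, and remains smooth because smooth morphisms are stable under blow-ups along pulled-back centers. Equivariance with respect to an arbitrary (possibly non-$K$-linear) group action is then automatic: every automorphism of $Y$ is in particular a smooth morphism to which the functoriality applies, so each canonical center is preserved and $\res_Y$ descends equivariantly; disjointness from $\Reg(Y)$ was already built into the invariant.
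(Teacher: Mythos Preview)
Your sketch is a reasonable outline of the Bierstone--Milman/Villamayor/W{\l}odarczyk algorithmic approach to canonical resolution, but there is nothing in the paper to compare it against: Theorem~\ref{th: 3} is not proved in this paper. It is stated in Section~1 under the heading ``Formulation of the Hironaka resolution theorems'' with the explicit remark that these are known results being quoted for later use, with references to \cite{Hir}, \cite{BM2}, \cite{Vi}, \cite{Wlo3}. The paper then \emph{applies} Theorems~\ref{th: 3}, \ref{th: 2}, \ref{th: 1} as black boxes to build the new results (combinatorial type, SNC morphisms, desingularization with SNC fibers).

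So your proposal is not wrong as a summary of how the cited works establish the theorem, but it is misplaced: you are reproving background material rather than anything the paper itself argues. If the assignment was to supply the paper's own proof of this statement, the correct answer is simply that the paper gives none and defers to the literature.
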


\item {\bf  Strong Hironaka Embedded  Desingularization} 

\begin{theorem} \label{th: emde} \label{th: 2} Let $Y$ be a subvariety of a smooth variety
$X$ over a field of characteristic zero.
There exists a  sequence $$ X_0=X \buildrel \sigma_1 \over\longleftarrow X_1
\buildrel \sigma_2 \over\longleftarrow X_2\longleftarrow\ldots
\longleftarrow X_i \longleftarrow\ldots \longleftarrow X_r=\widetilde{X}$$ of
blow-ups  $\sigma_i:X_{i-1}\longleftarrow X_{i}$ of smooth centers $C_{i-1}\subset
 X_{i-1}$ such that

\begin{enumerate}

\item The exceptional divisor $E_i$ of the induced morphism $\sigma^i=\sigma_1\circ \ldots\circ\sigma_i:X_i\to X$ has only  simple normal
crossings and $C_i$ has simple normal crossings with $E_i$.

\item Let $Y_i\subset X_i$ be the strict transform of $Y$. All centers $C_i$  are contained in $Y_i$, and are disjoint from the regular point set $\Reg(Y)\subset Y_i$ of points where   $Y_i$   is smooth.

\item  The strict transform $\widetilde{Y}:=Y_r$ of  $Y$  is smooth and
has only simple normal crossings with the exceptional divisor $E_r$.

\item The morphism $(X,{Y})\leftarrow (\widetilde{X},\widetilde{Y})$ defined by the embedded desingularization commutes with smooth morphisms and  embeddings of ambient varieties. It is equivariant with respect to any group action not necessarily preserving the ground $K$.

\end{enumerate}
\end{theorem}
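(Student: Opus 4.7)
The plan is to follow the canonical resolution framework of Hironaka, refined by Bierstone--Milman and Villamayor, which reduces the problem to principalization of the ideal sheaf $\cI_Y \subset \cO_X$ in a controlled way. The central notion is a marked ideal $(X, \cI, E, d)$ consisting of the smooth ambient $X$, a nonzero ideal $\cI$, an ordered tuple $E$ of smooth exceptional divisors with SNC, and a target order $d$; the algorithm is run on the marked ideal associated to $Y$, and once $\cI_Y$ becomes a monomial in the exceptional components a separate cleanup step peels off the smooth strict transform $\widetilde Y$ and makes it transverse to $E_r$.

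The core is a local resolution invariant $\inv$ taking values in a well-ordered set of lexicographic tuples of rationals, defined recursively on the ambient dimension. At a point $x \in X_i$ one takes the order of $\cI$ at $x$, chooses a smooth maximal contact hypersurface $H$ containing the maximal order locus (whose existence in characteristic zero rests on Hironaka's derivative trick), forms the coefficient ideal on $H$, and iterates in one fewer dimension; additional entries track components already produced by previous blow-ups so that new centers meet $E$ transversely. By construction the maximum locus of $\inv$ is smooth and has SNC with $E$, so it is admissible as the next center $C_i$. Blowing it up strictly decreases $\max\inv$ in the well-ordering, which one proves by a double induction on ambient dimension and on the leading entry of $\inv$; hence the procedure terminates after finitely many steps and delivers (1)--(3). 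Functoriality under smooth morphisms is then almost tautological, because orders and coefficient ideals pull back correctly under smooth pullback, so a smooth $\phi:X'\to X$ with $Y' := \phi^{-1}(Y)$ lifts to a smooth $\widetilde\phi$ by pulling back each center $C_i$.

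The main obstacle is the canonicity of $\inv$: different choices of maximal contact hypersurface give different coefficient ideals sitting on different hypersurfaces, and one must show that these produce the same sequence of blow-ups in $X$. This is the technical heart of canonical resolution and is handled by the theory of homogenized or companion ideals, which shows that two maximal contacts at $x$ differ by an \'etale-local change of coordinates preserving all the recursive data. Once this independence is established, the equivariance in (4) with respect to group actions not necessarily preserving the ground field follows because a canonical construction automatically commutes with every scheme-automorphism; and functoriality under embeddings of ambient varieties reduces, via the coefficient ideal, to the standard verification that $\inv$ for $\cI_Y \subset \cO_X$ coincides with $\inv$ for the pullback to $X\times\bbA^n$ up to a trivial extra coordinate.
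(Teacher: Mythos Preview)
The paper does not give its own proof of this theorem: it is stated in Section~1 as one of the ``Hironaka desingularization theorems in their strongest form'' and simply cited to \cite{Hir}, \cite{BM2}, \cite{Vi}, \cite{Wlo3}. So there is no argument in the paper to compare against.

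Your outline is a correct high-level sketch of the canonical embedded resolution algorithm as developed in those references (marked ideals, maximal contact, coefficient ideals, the lexicographic invariant $\inv$, and the homogenization/tuning trick that makes the construction independent of the choice of maximal contact and hence functorial). That is exactly the content the paper is importing by citation, so in effect you have supplied what the paper outsources. If anything, be aware that the level of detail you give is still only a roadmap: the well-definedness of $\inv$ and the termination argument each require substantial work that you are gesturing at rather than carrying out, but as a summary of the cited proofs it is accurate.
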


\item {\bf Canonical Principalization }
\begin{theorem} \label{th: 1} Let ${\cI}$ be a sheaf of ideals on a smooth algebraic variety $X$, and  $Y\subset X$ be any subvariety of $X$.
There exists a principalization of ${\cI}$  that is, a sequence

$$ X=X_0 \buildrel \sigma_1 \over\longleftarrow X_1
\buildrel \sigma_2 \over\longleftarrow X_2\longleftarrow\ldots
\longleftarrow X_i \longleftarrow\ldots \longleftarrow X_r =\widetilde{X}$$

of blow-ups $\sigma_i:X_{i-1}\leftarrow X_{i}$ of smooth centers $C_{i-1}\subset
 X_{i-1}$
such that

\begin{enumerate}

\item The exceptional divisor $E_i$ of the induced morphism $\sigma^i=\sigma_1\circ \ldots\circ\sigma_i:X_i\to X$ has only  simple normal
crossings and $C_i$ has simple normal crossings with $E_i$.

\item The
total transform $\sigma^{r*}({\cI})$ is the ideal of a simple normal
crossing divisor
$\widetilde{E}$ which is  a natural  combination of the irreducible components of the divisor ${E_r}$.

\end{enumerate}
The morphism $(\widetilde{X},\widetilde{\cI})\rightarrow(X,{\cI}) $ defined by the above principalization  commutes with smooth morphisms and  embeddings of ambient varieties. It is equivariant with respect to any group action not necessarily preserving the ground field $K$.

\end{theorem}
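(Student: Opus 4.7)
The plan is to follow the classical Hironaka--Bierstone--Milman--Villamayor strategy: attach to $\cI$ (together with the accumulated exceptional divisor $E_i$) an upper semicontinuous local invariant, take the center $C_i$ to be its maximum locus, and verify that the invariant drops strictly in the lexicographic order under the induced blow-up. Termination of a well-ordered sequence then forces $\sigma^{r*}(\cI)$ to become the ideal of an SNC combination of components of $E_r$, giving condition (b); canonicity and compatibility with smooth morphisms are built into the construction of the invariant itself.

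The construction proceeds by a double induction on $(d, \nu)$, where $d = \dim X$ and $\nu$ is the maximum of $\ord_x(\cI)$ on $X$. The equimultiple locus $S_\nu = \{x \in X : \ord_x(\cI) \ge \nu\}$ is Zariski closed. Locally near a point of $S_\nu$, choose a hypersurface of \emph{maximal contact}: a smooth hypersurface $H$ containing $S_\nu$ that is preserved under permissible blow-ups of centers inside $S_\nu$. On $H$, form the \emph{coefficient ideal}, a weighted combination of $\cI$ with its iterated derivatives, and invoke the theorem recursively in dimension $d - 1$. The resulting center pulled back to $X$ serves as $C_i$; blowing it up lowers the order of the controlled transform of $\cI$ along the new exceptional component.

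Two bookkeeping devices make this scheme canonical and compatible with $E_i$. First, the full Hironaka invariant is the sequence of successive orders $(\nu_1, \nu_2, \ldots)$ on iterated maximal contact hypersurfaces, interleaved with counts recording how many components of $E_i$ are met, which forces $C_i$ to have SNC with $E_i$ and secures condition (a). Second, the \emph{companion ideal} device multiplies $\cI$ by suitable powers of the old exceptional components; this reinterprets a failure of transversality with $E_i$ as an increase of order, so that every required condition is packaged into a single invariant whose strict decrease follows from standard computations in local coordinates.

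The main obstacle is showing that the maximal contact hypersurface $H$, which is non-unique, nevertheless leads to a coefficient ideal whose integral closure, or rather whose resolution data, is independent of $H$. This is the delicate point addressed by Villamayor's presentation via the universal differential operators and by Bierstone--Milman by passing to the $\cD$-module generated by $\cI$; once this canonicity is in hand, functoriality with smooth morphisms in (c) and equivariance under arbitrary automorphisms are immediate, because $\ord_x$, partial differentiation, and the coefficient and companion ideal constructions all commute with smooth base change. The inductive reduction to dimension zero then terminates since the lexicographic invariant is well-ordered.
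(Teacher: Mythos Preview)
The paper does not actually prove this statement. Theorem~\ref{th: 1} appears in Section~1, ``Formulation of the Hironaka resolution theorems,'' where it is quoted without proof as a known result, with references to \cite{Hir}, \cite{BM2}, \cite{Vi}, and \cite{Wlo3}. So there is no ``paper's own proof'' to compare against; the theorem functions here purely as background input for the later constructions (combinatorial type, desingularization with SNC fibers, etc.).

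Your sketch is a reasonable high-level summary of the approach taken in the cited references, particularly the Bierstone--Milman and Villamayor versions: upper-semicontinuous invariant, maximal contact, coefficient/companion ideals, induction on dimension, and canonicity via derivative ideals. As a roadmap it is accurate, though of course each of the steps you name (existence of maximal contact in characteristic zero, independence of the coefficient ideal from the choice of $H$, well-ordering of the invariant, compatibility of the exceptional bookkeeping with SNC) is itself a substantial argument in those papers rather than a one-line remark. If your intent was to supply the proof the paper omits, what you have written is an outline rather than a proof; if your intent was to indicate where the proof lives, simply citing \cite{Hir}, \cite{BM2}, \cite{Vi}, \cite{Wlo3} as the paper does would suffice.
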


\item {\bf  Extended Hironaka Embedded  Desingularization}

In the constructions used in this paper it will be  convenient to use the following modification of the strong Hironaka Embedded desingularization.
\begin{definition} Let $Y$ be a subvariety of a smooth variety
$X$ over a field of characteristic zero.

By the {\it extended Hironaka desingularization} we shall mean a 
sequence of blow-ups as in Hironaka embedded desingularization followed by the blow-up $\overline{X}\to \widetilde{X}$ of the smooth center of the strict transform $C=\widetilde{Y}$ on $\widetilde{X}$.
\end{definition}

Observe that extended  Hironaka  desingularization creates SNC divisor $D$ on $\overline{X}$.
It is functorial, commutes with embeddings of ambient varieties.
Moreover all the centers of the blow-us are contained in the strict transforms of $Y$, and in particular are of dimension $\leq \dim(Y)$. 

\end{enumerate}

\section{Dual complexes of SNC varieties and cone extensions}

\begin{definition} \label{de: fan} By an {\it abstract polyhedral complex}
$\Sigma $  we mean a finite partially ordered $\leq$ collection of polytopes $\sigma$  together with inclusion maps $i_{\tau\sigma}:\tau\to \sigma$ for any $\tau\leq \sigma$ 
that 
\begin{enumerate}
\item For any $\tau\leq \sigma$, the subset $i_{\tau\sigma}(\tau)$ is a face of $\sigma$ 
\item For any face $\sigma'$ of a polytope $\sigma\in\Sigma$, there exists a $\tau\in \Sigma$ such that $i_{\tau\sigma}(\tau)=\sigma'$.

\item For any $\tau,\tau'\leq \sigma$ such that 
$i_{\tau\sigma}(\tau)=i_{\tau'\sigma}(\tau')$ we have that
$\tau=\tau'$.

\end{enumerate} 

If all faces are simplices then the complex will be called {\it simplicial}. 
A subset $\Sigma'$ of a polyhdral complex which is a polyhedral complex  itself will be called a subcomplex 
If $\tau\leq \sigma$ then we shall also call $\tau$ a {\it face of $\sigma$ } slightly abusing terminology.

By the {\it geometric realization} $|\Sigma|$ of $\Sigma$ we mean the
topological space $$\coprod_{\sigma\in \Sigma} \,\, \sigma \,\,\slash\sim\,\,,$$
when $\sim$ is the equivalence relation generated by the inclusion maps $i_{\tau\sigma}$

\end{definition}

Note that we do not require that polytopes intersect along faces but rather subcoplexes.

The faces of polytopes $\sigma$ define unique elements of $\Sigma$, and thus $|\Sigma|$ is obtained by gluing maximal polytopes along their faces.

\begin{definition} The {\it map of polyhedral complexes} $\phi: \Sigma\to \Sigma'$ is a continuous map of the geometic realizations $|\phi|: |\Sigma|\to |\Sigma'|$, and the compatible affine maps of the faces
$\phi_\sigma: \sigma\to \sigma'$, where $\sigma'$ is the smallest face  $\sigma'\in \Sigma'$ such that $|\phi|(\sigma)\subset \sigma'$, and the following diagram commutes:$$\begin{array}{cccl}

\sigma &\buildrel{\phi_\sigma}\over\to  & \sigma'&  \\
\cap  && \cap &\\
 |\Sigma|&\buildrel{|\phi|}\over\to &|\Sigma'| &\end{array}$$

 If $|\phi|$ is a homeomorphism then we shall call $\Sigma$ {\it a subdivision} of $\Sigma'$. If, moreover $\Sigma$ is simplicial then it will be called {\it triangulation} of $\Sigma'$.

\end{definition}

In the paper we shall consider mostly simplicial complexes referring to them as {\it complexes}. The polyhedral complexes  occur only when introducing cartesian products.

\begin{definition} A {\it variety with simple normal crossings (SNC)} is a reduced sheme $X$ of finite type with all maximal components $X_i$ smooth over $K$, and having SNC crossings. That is for any $p\in X$ there is a neighborhood $U$ of $p$ in $X$ , and 
an \'etale morphism of $U\to Z$, where $Z\subset \bA^n$ is a union of coordinate subspaces of possibly different dimension.
If $X$ is an SNC  variety on  a smooth ambient variety $Z$, and $C$ is a smooth subvariety then we say that $C$ has SNC with $X$ if  for any $p\in X$ there is a neighborhood $U$ of $p$ in $Z$, and 
an \'etale morphism of $\phi: U\to \bA^n$, where $C$ is the preimage of a coordinate subspace and $X$ it the preimage    of a union of coordinate subspaces of possibly different dimension.

\end{definition}

Recall that given an SNC divisor or more generally a variety with SNC  $D=\bigcup D_i$ with maximal components $D_i$  one can associates with it the dual complex.   
 More precisely  we define the dual complex $\Delta(D)$ to be the simplicial complex whose vertices correspond to the maximal components $D_i$, and 
  simplices $\Delta_{j_0\ldots,j_p}^s$ or shortly $\Delta_\alpha$, where $\alpha:=(j_0\ldots,j_p:s)$ are in correspondence  to the irreducible $D_\alpha$ components of $(p+1)$-fold nonempty intersections $D_{j_0}\cap\ldots\cap D_{j_p}$.  
Note that the components $D_\alpha$ need not to be distinct for different $\alpha$.
  
 The dual complexes are often constructed from SNC divisors which are obtained by succesive blow-ups of smooth centers having SNC with the exceptional divisors.
It is well known fact that when blowing up a center which is an intersection component of the divisor we form a new complex which is the star subdivision at the simplex corresponding to the component. In general one needs to extend the notion of star subdivision to the, introduced here more universal transformation of {\it cone extension} (Definition \ref{cone}, Porposition \ref{cone2}).

\bigskip

For any two faces $\sigma$ and $\tau$ in a complex $\Sigma$ we write 
$\sigma<\tau$ if $\sigma$ is a proper face of $\tau$.

For any complex $\Sigma$ and its subset $\Delta\subset$ denote by  {\it closure} $\overline{\Delta}=\{\tau\mid \tau \leq \sigma, \sigma\in \Delta\}$.

\begin{definition}\label{de: star} Let $\Sigma$ be a complex 
and $\tau \in \Sigma$ be its face. The {\it star} of the
cone $\tau$, the {\it closed star}, and the {\it link} of $\tau$ are
defined as follows:
 $${\rm Star}(\tau ,\Sigma):=\{\sigma \in \Sigma\mid 
\tau\leq \sigma\},$$ 
$$\overline{{\rm Star}}(\tau ,\Sigma):=\{\sigma \in
\Sigma\mid  \sigma'\leq \sigma  \mbox{ for some }  \sigma'\in
{\rm Star}(\tau ,\Sigma)\}.$$ 
$$\Link(\tau ,\Sigma):=\overline{{\rm Star}}(\tau ,\Sigma)\setminus {\rm Star}(\tau ,\Sigma)$$
\end{definition} 

We say that a  subset $\Delta$ of a complex  $\Sigma$ is {\it star-closed} iff
whenever $\tau\leq\sigma$  and $\tau\in \Delta$, and $\sigma\in \Sigma$ we have that $\sigma\in \Delta$.

 By the {\it support} of a subset $\Delta$ of a complex  $\Sigma$ we mean the union of the relative interior of all
its faces, 
$|\Delta|=\bigcup_{\tau\in \Delta}\inte(\tau)\subset|\Sigma|$. Observe that the support of the complex $\Sigma$ coincides with its geometric realization $|\Sigma|$. That is why we use the same notation for both notions. In general 
$|\Delta|$ is a locally closed subset of $|\Sigma|$.

The definition of the support gives a  convenient description of topology on $|\Sigma|$.

 \begin{lemma} Let $\Delta$ be a subset of $\Sigma$. Then
 \begin{enumerate}
  \item The subset $\Delta$ is star closed in $\Sigma$ iff its complement $\Sigma\setminus \Delta$ is a subcomplex of $\Sigma$.

 \item The subset $\Delta$ is star closed in $\Sigma$ iff $|\Delta|$ is open in $|\Sigma|$
 \item $\Delta$ is a subcompex of $\Sigma$ iff $|\Delta|$ is closed in $|\Sigma|$.
\item $|\overline{\Delta}|$ is the closure of $|\Delta|$. 
 \end{enumerate}
 \end{lemma}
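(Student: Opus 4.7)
The plan is to prove the four parts in an order that exposes their logical dependencies, so that the content reduces to essentially one geometric observation: that $|\Sigma|$ decomposes as a disjoint union of embedded relative interiors of its faces.

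First I would dispatch (1), which is purely combinatorial. The defining implication of star-closedness, ``$\tau\in\Delta$ and $\tau\leq\sigma\Rightarrow\sigma\in\Delta$'', is the contrapositive of ``$\sigma\in\Sigma\setminus\Delta$ and $\tau\leq\sigma\Rightarrow\tau\in\Sigma\setminus\Delta$'', which is exactly the face-closedness required for $\Sigma\setminus\Delta$ to be a subcomplex. This follows immediately from the definitions.

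Next I would establish (3), the geometric core. The key fact is that $|\Sigma|=\bigsqcup_{\sigma\in\Sigma}\inte(\sigma)$ set-theoretically, while each closed face satisfies $\sigma=\bigcup_{\tau\leq\sigma}\inte(\tau)$ and sits as a closed subset of $|\Sigma|$. For the forward direction, if $\Delta$ is a subcomplex then $|\Delta|=\bigcup_{\sigma\in\Delta}\sigma$ is a finite union of closed subsets, hence closed. Conversely, if $|\Delta|$ is closed, take $\sigma\in\Delta$ and a face $\tau<\sigma$; then $\inte(\tau)\subset\sigma=\overline{\inte(\sigma)}\subset|\Delta|$, and since $|\Delta|$ is the disjoint union of $\inte(\rho)$ for $\rho\in\Delta$, the inclusion $\inte(\tau)\subset|\Delta|$ forces $\tau\in\Delta$. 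Assertion (2) is then obtained by taking complements: $|\Delta|$ is open iff $|\Sigma|\setminus|\Delta|=|\Sigma\setminus\Delta|$ is closed, iff by (3) $\Sigma\setminus\Delta$ is a subcomplex, iff by (1) $\Delta$ is star-closed.

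For (4), the set $\overline{\Delta}=\{\tau\mid\tau\leq\sigma\text{ for some }\sigma\in\Delta\}$ is by construction closed under taking faces, hence a subcomplex, so (3) shows $|\overline{\Delta}|$ is closed; it plainly contains $|\Delta|$. Minimality: any closed set $C\supset|\Delta|$ contains each $\sigma\in\Delta$ (because $\inte(\sigma)\subset C$ forces $\sigma=\overline{\inte(\sigma)}\subset C$), hence all faces $\tau\leq\sigma$ and their interiors, hence $|\overline{\Delta}|$. The one point needing care, and the only real obstacle, is the geometric premise used throughout (3) and (4): one must verify that on the quotient $\coprod_\sigma\sigma/{\sim}$ defining $|\Sigma|$, the relative interiors $\inte(\sigma)$ remain pairwise disjoint and each closed face embeds as a closed subset. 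This follows from condition (3) of Definition \ref{de: fan} (injectivity of the identifications $i_{\tau\sigma}$), after which the four assertions follow formally from the arguments above.
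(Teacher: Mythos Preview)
The paper states this lemma without proof, treating it as an elementary fact about simplicial complexes; your argument is correct and supplies exactly the details one would expect. Your ordering $(1)\Rightarrow(3)\Rightarrow(2)\Rightarrow(4)$ is natural, and you rightly isolate the only genuinely geometric point, namely that the quotient $|\Sigma|$ decomposes disjointly into the embedded relative interiors $\inte(\sigma)$ with each closed simplex mapping to a closed subset; once that is granted, everything else is formal.
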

\begin{remark} Note that the components $D_\alpha$ may correspond to different simplices. For any such a component $C=D_\alpha$ there exists a unique maximal face $\sigma_C$ corresponding to all the divisors $D_i$ containing  $C$. Then all other simplices $\tau$ with the property $D_\tau=C$ correspond to certain faces of $\sigma_C$.
\end{remark}

\begin{remark} If $\tau \leq \tau'$ then $D_{\tau'}\subset D_{\tau}$. On the other hand if $D_{\tau'}\subset D_{\tau}$ then
$D_{\tau'}\cap D_{\tau}\subset D_{\tau}$ which means that there exists a maximal  simplex $\tau'_{max}$ such that $D_{\tau'}=D_{\tau'_{max}}$ and for which $\tau'\leq {\tau'_{max}}$, and $\tau\leq {\tau'_{max}}$. Thus for $\tau'=\tau'_{max}$, we get that 
$D_\tau \supseteq D_{\tau'_{max}}$ iff $\tau\leq \tau'_{max}$.
\end{remark}

\begin{definition} By the {\it interior} of a subset $\Delta$ in the complex $\Sigma$ we mean the maximal star closed subset $\inte(\Delta)$ of $\Delta$
\end{definition}

If $v$ is independent of  cones $\sigma\in \Delta$ then by $v*\Delta(D)$ we shall denote the complex consisting of the cones $\conv(v,\sigma)$  
over $\sigma\in \Delta$ with vertex $v$. We also put  $v*\emptyset=\{v\}$. 

\begin{definition}\label{de: star subdivision} Let $\Sigma$ be a complex and
$v$ be a point in the
relative interior of $\tau\in\Sigma$. Then the {\it star
subdivision}  of $\Sigma$ with respect to
$\tau$ is defined to be
$$v\cdot\Sigma=(\Sigma\setminus {\rm Star}(\tau ,\Sigma) )\, \cup\,
v*(\overline{\rm Star}(\tau
,\Sigma)\setminus {\rm Star}(\tau
,\Sigma)).$$ \end{definition}

Note that in the definition $v$ can be chosen as any vertex independent of the faces in the $\Link(\tau
,\Sigma)$. The assumption that $v$ is in the
relative interior of $\tau$ is to visualize the operation and can be dropped.


\begin{definition}\label{cone} Given a complex $\Sigma$, and  its subset $\Delta^0$ which is star closed and contained in the subcomplex $\Delta$ of $\Sigma$ we define a {\it cone extension} of $\Sigma$ at $(\Delta,\Delta^0)$ to be  the complex $$(\Delta,\Delta^0)\cdot \Sigma:=\Sigma\setminus \Delta^0 \cup v*(\Delta\setminus \Delta^0),$$
where $v$ is any new vertex which is independent from faces in $\Delta\setminus \Delta^0$. By the {\it pure cone extension} of  $\Sigma$ we mean the complex $(\Delta,\emptyset)\cdot \Sigma$, where $\Delta$ is a subcomplex of $\Sigma$
\end{definition}

Note that $\Delta^1:=\Delta\setminus \Delta^0$ is necessarily a subcomplex of $\Sigma$ and $\Delta$.  On the other hand $\Delta^0\subset \inte(\Delta)$.
\begin{example} Let $\tau\in \Sigma$ be arbitrary face. Consider the subsets $$\Delta^0={\Star(\sigma,\Sigma)},\,\,\,\,\Delta=\overline{\Star(\sigma,\Sigma)}.$$ In that case $$\Delta^1=\overline{\Star(\sigma,\Sigma)}\setminus {\Star(\sigma,\Sigma)}=\Link(\sigma,\Sigma),$$ and 
the cone extension of $\Sigma$ at $(\Delta,\Delta^0)=(\overline{\Star(\sigma,\Sigma)},{\Star(\sigma,\Sigma)})$ is the star subdivision of $\Sigma$ at $\sigma$.
\end{example}

\begin{example} The cone extension of  $\Sigma$ at $(\emptyset,\emptyset)$ is the disjoint union $\Sigma\cup \{v\}$, and thus the operation changes the homotopy type of $|\Sigma|$.

\end{example}

 For any variety $D$ with SNC denote by $D_\tau$, where $\tau\in \Delta(D)$, the  stratum which is the intersection of divisors corresponding to vertices of $\tau$, and by $v_C$

 \begin{proposition}\label{cone2} Let $C$ be a center having SNC with a variety with SNC $D$ on  a smooth variety $X$. Set $$\Delta^0_C:=\{\tau\in \Delta(D) \mid D_\tau\subset  C \} $$
 $$\Delta_C:=\{\tau\in \Delta(D) \mid D_\tau\cap C\neq \emptyset\} $$

Then $\Delta^0_C$ is star closed, and
the new complex $\Delta(D')$ arising from $D$ after blow-up at $C$ is given by the cone extension of $\Delta(D)$ at $(\Delta_C,\Delta^0_C)$.
$$\Delta(D')=(\Delta_C,\Delta^0_C)\cdot \Delta(D)$$
Moreover  \begin{enumerate}
\item The  new vertex $v=v_C$ corresponds to the exceptional divisor $E$.
 \item The components which are eliminated after blow-up are those corresponding to $\Delta^0_C$.
\item The components created in the blow-up coorespond to faces  in \,\, $(v_C*\Delta^1_C)\,\, \setminus \,\,  \Delta^1_C$.
 
 \item  If the center  $C$ is the intersection  component and corresponds to the face $\sigma_C$ then $$\Star(\sigma_C,\Delta(D))\subseteq\Delta^0_C\,\,\subseteq \Delta_C=\overline{\Star(\sigma_C,\Delta(D))}.$$
 In this  case   $|\Delta(D)|$ and $|\Delta(D')|$ are homotopy equivalent.

\item If the center  $C$ is properly contained in the smallest intersection  component containing it then $$\Delta_C\subseteq\overline{\Star(\sigma_C,\Delta(D))}$$ with the maximal simplices of $\Delta_{C}$  in $\Star(\sigma_{C},\Delta(D)$.
In this case  $\Delta_C$ is contractible and
 $|\Delta(D')|$  retracts homotopically to  $|\Delta(D)|$.  
 
 \end{enumerate}
 \end{proposition}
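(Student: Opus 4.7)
The proof is essentially a combinatorial bookkeeping computation, reduced to a standard local model by the SNC hypothesis on the pair $(C,D)$. The plan is: localize, verify the set-theoretic claims about $\Delta^0_C$ and $\Delta_C$, read off items (i)--(iii) from the blow-up in local coordinates to obtain the cone extension formula, and finally derive the homotopy statements (iv) and (v).

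First I would pass to an \'etale chart in which $X = \bA^n$, each maximal component of $D$ is a coordinate hyperplane $D_i = \{x_i = 0\}$ for $i$ in an index set $I$, and $C = \{x_j = 0 : j \in J\}$ for some $J \subseteq \{1,\dots,n\}$. The strata take the form $D_\tau = \{x_i = 0 : i \in \tau\}$, so $D_\tau \subseteq C$ is equivalent to $J \subseteq \tau$, and $D_\tau \cap C$ is the stratum indexed by $\tau \cup J$, nonempty exactly when $\tau \cup J$ defines a simplex of $\Delta(D)$. Star-closedness of $\Delta^0_C$ is then immediate, since $\tau \leq \sigma$ forces $D_\sigma \subseteq D_\tau$. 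For case (iv), where $C = D_{\sigma_C}$, we have $J = \sigma_C \subseteq I$, yielding $\Star(\sigma_C,\Delta(D)) \subseteq \Delta^0_C$ (any $\tau \geq \sigma_C$ gives $D_\tau \subseteq D_{\sigma_C}=C$) and $\Delta_C = \overline{\Star(\sigma_C,\Delta(D))}$. For case (v), the strict containment $C \subsetneq D_{\sigma_C}$ forces $J$ to contain a coordinate not in $I$, so $\Delta^0_C = \emptyset$, and $\Delta_C \subseteq \overline{\Star(\sigma_C,\Delta(D))}$ has all maximal simplices containing $\sigma_C$.

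Next I would analyze the blow-up $\pi : X' \to X$ at $C$. The strict transform of a maximal component $D_\alpha$ is empty precisely when $D_\alpha \subseteq C$; equivalently, the vertices in $\Delta^0_C$ index the components eliminated, proving (ii). The exceptional divisor $E = \pi^{-1}(C)$ contributes one new maximal component, represented by the vertex $v_C$, establishing (i). For $\tau \in \Delta^1_C = \Delta_C \setminus \Delta^0_C$, the intersection of the strict transform of $D_\tau$ with $E$ is a new irreducible stratum of the blown-up SNC variety corresponding to the simplex $v_C * \tau$; conversely every new stratum arises in this way, which gives (iii). Assembling the three pieces yields
$$\Delta(D') \;=\; (\Delta(D) \setminus \Delta^0_C) \,\cup\, v_C * (\Delta_C \setminus \Delta^0_C) \;=\; (\Delta_C, \Delta^0_C) \cdot \Delta(D).$$

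For the homotopy statements, if $\Delta^0_C = \Star(\sigma_C,\Delta(D))$ then the cone extension is literally the star subdivision at $\sigma_C$ and produces a homeomorphism $|\Delta(D')| \cong |\Delta(D)|$, proving (iv) in this case. The general case of (iv) with $\Star(\sigma_C) \subsetneq \Delta^0_C$ is the main subtlety: the extra simplices lie in $\Link(\sigma_C)$, and the cone extension can be reduced to the star subdivision by an inductive elementary collapse through the apex $v_C$, preserving homotopy type. For (v), $|\Delta_C|$ is contractible since every maximal simplex of $\Delta_C$ contains $\sigma_C$ (one contracts radially to a chosen point of $\sigma_C$), so $|\Delta(D')|$ is obtained from $|\Delta(D)|$ by attaching the cone $|v_C * \Delta_C|$ along the contractible subcomplex $|\Delta_C|$, yielding a deformation retract onto $|\Delta(D)|$. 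The delicate step is the inductive collapse in the generalized case of (iv), which I expect to be the main obstacle.
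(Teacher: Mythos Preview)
Your argument has a genuine gap in the local model. The proposition is stated for a \emph{variety with SNC} $D$, not merely an SNC divisor; by the paper's definition, the maximal components of $D$ are \'etale-locally coordinate subspaces of \emph{possibly different dimensions}, not hyperplanes. Your chart $D_i=\{x_i=0\}$ with $I\subseteq\{1,\dots,n\}$ is the SNC-divisor model and does not cover the general case. This matters most in (v): your deduction that $\Delta^0_C=\emptyset$ there is false for SNC varieties. For instance, take $D_1=\{x_1=0\}$ and $D_2=\{x_2=x_3=0\}$ in $\bA^3$ and $C=\{x_1=x_2=0\}$; then $\sigma_C=\{1\}$, $C\subsetneq D_{\sigma_C}=D_1$, yet $D_{\{1,2\}}=\{0\}\subset C$, so $\Delta^0_C\neq\emptyset$. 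Your cone-attachment argument for (v) therefore breaks: $\Delta(D')$ is not obtained by attaching $v_C*\Delta_C$ along $\Delta_C$, because some faces of $\Delta(D)$ are genuinely removed.

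On the homotopy side, the paper takes a different route which sidesteps the ``delicate collapse'' you flagged. Rather than comparing $\Delta(D')$ directly to the star subdivision, it introduces the intermediate \emph{pure} cone extension
\[
\Delta(D)'' \;=\; \Delta(D)\,\cup\, v_C*\Delta_C,
\]
and shows two retractions: (a) $|\Delta(D)''|$ retracts to $|\Delta(D)|$ by sliding $v_C$ to the barycenter of $\sigma_C$ (using contractibility of $\Delta_C=\overline{\Star(\sigma_C)}$); and (b) $|\Delta(D)''|$ retracts to $|\Delta(D')|$ by successively eliminating the maximal faces $\tau\in\Delta^0_C$, each time moving the barycenter $v_\tau$ to $v_C$. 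Step (b) works precisely because every such $\tau$ is a maximal face of the current complex (it lies in $\Delta^0_C$, which is star closed), so the move is an elementary collapse onto $v_C*\partial\tau$. This two-sided comparison through $\Delta(D)''$ handles both (iv) and (v) uniformly and avoids having to analyze the gap between $\Delta^0_C$ and $\Star(\sigma_C)$ inside a star subdivision.
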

\begin{proof}
 (1), (2), and (3) follow from the definition and properties of blow-ups.

 (4) Assume  that $C=D_{\sigma_C}$ be the minimal component describing $C$.
Then $\tau\in \Delta_C$ then  $D_\tau\cap C \neq \emptyset$,
 and $D_\tau \cap D_{\sigma_C} \neq \emptyset$. In other words $\Star(\tau,\Delta(D))\cap \Star(\sigma_C,\Delta(D))\neq \emptyset$, and  
 the maximal simplices of $\Delta_{C}$ are in $\Star(\sigma_{C},\Sigma_D)$. Also  if $\tau\in\Star(\sigma_C,\Delta(D))$
 then $D_\tau\subset {D_{\sigma_C}}=C$, which means that $\tau \in \Delta_C$. Thus $\Star(\sigma_C,\Delta(D))\subset 
 \Delta_C$.
 
  Consider the pure cone extension $(\Delta(D))''=\Delta(D) \cup v*(\Delta_C)$ of $\Delta(D)$ at $\Delta_C=\overline{\Star(\sigma_C,\Delta(D))}.$ Let $v_C$ be the barycenter of $\sigma_C$. Any maximal face $\tau$ in $\Delta^0_C$ is star closed  in $\Delta(D)$.
Consider its barycenter $v_\tau$. Then moving $v_\tau$ to $v$ defines the retraction eliminating face $\tau$. Then we eliminate the next maximal face $\tau_2$ in $\Delta^0_C\setminus \setminus \{\tau\}$. The process will continue upon eliminating all the faces in $\Delta^0_C$ and transforming homotopically 
 $|(\Delta(D))''|$ into $|(\Delta(D))'|=|\Delta(D)\setminus \Delta^0_C \cup v*(\Delta_C\setminus \Delta^0_C)|$.
 
  Note also that both $\Delta_C=\overline{\Star(\sigma_C,\Delta(D))}$, and $v*(\Delta_C)$ are contractible. Moving $v$ to $v_\tau$ defines retraction of $(\Delta(D))''=\Delta(D) \cup v*(\Delta_C)$ to its subcomplex  $\Delta(D)$.
 
 (5) Assume $C$ is properly contained in  $D_{\sigma_C}$.
Then  as in (4)  if $\tau\in \Delta_C$ then  $D_\tau\cap C \neq \emptyset$ and $D_\tau \cap D_{\sigma_C} \neq \emptyset$. As before  $\Star(\tau,\Delta(D))\cap \Star(\sigma_C,\Delta(D))\neq \emptyset$, and  
 the maximal simplices of $\Delta_{C}$ are in $\Star(\sigma_{C},\Sigma_D)$. 
 

Write $\Delta(D')=\Delta(D)\setminus \Delta_C^0 \cup v*(\Delta_C\setminus \Delta_C^0)$. Since ${\Star(\sigma_C,\Delta(D))}\subset \Delta^0_C$ the complex $\Delta(D')$ this is a subcomplex of
  the star subdivision $\Delta(D)\setminus {\Star(\sigma_C,\Delta(D))} \cup v*(\Delta_C\setminus {\Star(\sigma_C,\Delta(D))})$.
As before we eliminate one by one maximal faces $\tau$ in $\Delta^0_C$   in $\Delta(D)\setminus {\Star(\sigma_C,\Delta(D))}$ by moving their  barycenter $v_\tau$  to $v$ defines the retraction eliminating face $\tau$. The process will transform $\sigma\cdot \Delta(D)$ to $\Delta(D')$.

\end{proof}
\begin{remark} Assume  $C$ is the component of $D$ that corresponds to certain maximal face $\sigma_C\in \Delta(D)$ with the property $C=D_{\sigma_C}$. Then the set  $\Sigma_C:=\{\sigma\in \Delta(D)\mid D_\sigma=C\}$ consists $\sigma_C$ and some of its faces.
 Then 
if $D_\tau\subset C$ then $\sigma_C\leq \tau_{max}$ which means $\tau\in  \Star(\sigma,\Delta(D))$ for $\sigma \in \Sigma_C$. The component $D_\tau$ is contained in  $C=D_\tau\subset \overline{D_{\sigma_C}}$ iff $\tau\in\Star(\sigma_C,\Delta(D))$.
 
Since the set   $\Delta^0_C=\{\tau \in \Delta(D)\mid D_\tau\subset C\}$ is contained in $\overline{\Star(\sigma_C,\Delta(D))}$, it is 
equal to $$\Delta^0_C= \bigcup_{\sigma \in\Sigma_C}\Star(\sigma,\overline{\Star(\sigma_C,\Delta(D))})$$

\end{remark}
\begin{remark} Consider a union of the coordinate axes in $\bA^3$.
It is an SNC variety $E$with maximal components $E_i$ ,$i=0,1,2$ given by the axes. The dual complex  $\Delta(E)$  consists of the simplex $\Delta(e_0,e_1,e_2)$ and their faces. The cone $\Delta(e_0,e_1,e_2)$ and its all $1$ -dimensional faces  correspon the origin $O$, and they form the subset $\Delta^0_C$ for $C=\{0\}$. The set $\Delta_C$ coincides with $\Delta(E)$.
  The cone extension of $\Delta(E)$ at $(\Delta_C,\Delta^0_C)$ corresponding to the blow-up of $C=\{0\}$ is given by $v*(\Delta(E)\setminus \Delta^0_C
$, and thus consists of three one dimensional simplices $\Delta(v,e_i)$ interscecting at $v$. It corresponds to the exceptional divisors meeting the three disjoint strict transforms of the axes.

\end{remark}

The particular case of SNC divisor is somewhat simpler and was first studied by
Stepanov \cite{step2} (Cases (1),(2), and (4) )
\begin{proposition}\label{cone3} Let $C$ be center having SNC with a  SNC divisor $D$ on  a smooth variety $X$. 

Then the following possibilities hold

  \begin{enumerate}
\item  If the center  $C$ is the intersection  component and corresponds to the face $\sigma_C$ then $\Delta^0_C=\Star(\sigma_C,\Delta(D))$, and $\Delta_C=\overline{\Star(\sigma_C,\Delta(D))}$.
 Consequently the new  complex $\Delta(D')$ is given by the star subdivision of $\Delta(D)$ at $\sigma_C$.
 
\item If the center  $C$ is properly contained in the smallest intersection  component corresponding to $\sigma_C$  then $\Delta^0_C=\emptyset$, $\Delta_C\subset\overline{\Star(\sigma_C,\Delta(D))}$. Moreover the maximal simplices of $\Delta_C$ are in $\Star(\sigma_C,\Delta(D))$.

In this case $\Delta(D')$ is a pure cone extension at $(\overline{\Star(\sigma_C,\Delta(D))},\emptyset)$, and $\Delta(D')$ is homotopy equivalent to $\Delta(D)$.

\item If $C$ is not contained in $D$, and $D$ is an SNC divisor then
$D$ contains no components of $D$ and thus $\Delta^0_C=\emptyset$, and $\Delta(D')$ is a pure cone extension of $\Delta(D)$ at $(\Delta_C,\emptyset)$.
\item In the case (3) if $D''$ is the inverse image of $D$ then
$\Sigma(D)=\Sigma(D')$.
 
 \end{enumerate}
 \end{proposition}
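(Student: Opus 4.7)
The plan is to derive each case of Proposition \ref{cone3} from Proposition \ref{cone2} by exploiting the simpler geometry available in the SNC divisor setting. The key tool is the \'etale local model furnished by the SNC hypothesis on $(C,D)$: at every point there are coordinates $x_1,\ldots,x_n$ in which $D = V(x_1)\cup\cdots\cup V(x_k)$ and $C = V(x_{i_1})\cap\cdots\cap V(x_{i_m})$. Every stratum $D_\tau$ is locally defined by an index subset $J \subset \{1,\ldots,k\}$, so the containment $D_\tau \subset C$ reduces to the purely combinatorial condition $\{i_1,\ldots,i_m\}\subset J$, and nonempty intersection $D_\tau \cap C \neq \emptyset$ is checked by requiring the combined index set to describe a globally nonempty stratum.

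For case (1), when $C = D_{\sigma_C}$, the index set $\{i_1,\ldots,i_m\}$ equals the vertex set of $\sigma_C$ and lies in $\{1,\ldots,k\}$; the dictionary above gives $D_\tau \subset C$ iff $\sigma_C \leq \tau$, so $\Delta^0_C = \Star(\sigma_C,\Delta(D))$, and an analogous argument (nonempty intersection in place of inclusion) yields $\Delta_C = \overline{\Star(\sigma_C,\Delta(D))}$. Definition \ref{de: star subdivision} then identifies the cone extension with the star subdivision at $\sigma_C$. For cases (2) and (3), I would establish $\Delta^0_C = \emptyset$ simultaneously: in case (2) the proper containment $C \subsetneq D_{\sigma_C}$ forces at least one $i_j > k$, while in case (3) the hypothesis $C \not\subset D$ yields $\{i_1,\ldots,i_m\}\cap\{1,\ldots,k\}=\emptyset$; in either situation no subset $J\subset\{1,\ldots,k\}$ can contain $\{i_1,\ldots,i_m\}$. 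The pure cone extension, together with the homotopy equivalence in case (2), then follows directly from Proposition \ref{cone2}(5), and the inclusion $\Delta_C \subset \overline{\Star(\sigma_C,\Delta(D))}$ with maximal simplices in $\Star(\sigma_C,\Delta(D))$ is obtained by applying the same combinatorial analysis to intersections rather than containments.

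For case (4), with $D''$ the total inverse image of $D$, I would observe that in the situation of case (3) the blow-up is an isomorphism away from $C$ and that no stratum of $D$ lies in $C$, so the strict transform of $D$ is combinatorially identical to $D$ itself; reading case (4) as the statement that adding the exceptional component $E$ does not disturb this identification then gives $\Sigma(D) = \Sigma(D'')$, since the pure cone $v_C \ast \Delta_C$ attached in case (3) is contractible and does not contribute new incidences between transforms of the $D_i$. The main obstacle will be the clean bookkeeping of how the local model controls $\Delta^0_C$ and $\Delta_C$ across the various case distinctions; once this dictionary is pinned down, all combinatorial claims reduce to routine verification and the topological statements follow from Proposition \ref{cone2}.
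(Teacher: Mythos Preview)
Your proposal is correct and follows essentially the same approach as the paper. Both arguments hinge on the local SNC coordinate model to pin down $\Delta^0_C$: the paper phrases it as the contrapositive (if $C \supset D_\alpha$ for some stratum $D_\alpha$ locally given by $u_1=\ldots=u_k=0$, then $C$ must itself be $u_1=\ldots=u_r=0$ with $r\leq k$, hence an intersection component), while you argue directly that in cases (2) and (3) the center $C$ involves at least one non-divisorial coordinate and therefore cannot contain any $D_\tau$; the two are logically equivalent, and the remaining assertions are then read off from Proposition~\ref{cone2}.
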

\begin{proof}  Suppose $C$ contains a component $D_\alpha$ which is locally desccribed by the compatible coordinates $u_1=\ldots=u_k=0$, where $u_i$ describe the divisorial components. Then the center must have a form $u_1=\ldots=u_r=0$, for $r\leq k$, and appropriate coordinate  rearrangements. This means that it is a component of $D$. Then $\Delta^0_C=\emptyset$ in case (2) and (3). The rest of the assertions follows from the previous Proposition.

\end{proof}


\begin{lemma}\label{impo} Consider any embedding $ Z\subset T$ of smooth varieties. Let $\overline{Z}\to Z$ be a sequence of blow ups of nonsingular centers
having  SNC with exceptional divisors and let $\overline{T}\to T$ be the induced morphism. Denote the exceptional (SNC) divisor of $\overline{Z}\to Z$ by $D^Z$,  and by $D^T$ be the exceptional divisor of $\overline{T}\to T$. Then 
\begin{enumerate}
\item $D^Z=D^T\cap \overline{Z}$
\item  There is an inclusion of complexes $\Delta_Z:=\Delta(D^Z)\subset \Delta_T:=\Delta(D^T)$.
\item  Each intersection component $D^Z_\alpha$ is contained in a unique minimal component $D^T_\alpha$. Moreover the components $D^Z_\alpha$ are in bijective correspondence with the components of $D^T_\alpha$ intersecting $Z$.
\item $\Delta_Z$ is  a deformation retract of its subcomplex $\Delta_T$.

\end{enumerate}
\end{lemma}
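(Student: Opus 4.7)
The plan is to induct on the number $r$ of blow-ups in the sequence $\overline{Z} \to Z$, establishing (1)--(4) simultaneously at each stage. The base case $r=0$ is vacuous: $D^Z$ and $D^T$ are empty and $\overline{Z}=Z$.

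For the inductive step on (1), assume at stage $i$ that $D^Z_i = D^T_i \cap Z_i$, with $Z_i \subset T_i$ smooth and $Z_i$ having SNC with $D^T_i$. The next center $C_i \subset Z_i$ is smooth and SNC with $D^Z_i$. In \'etale local coordinates on $T_i$ where $D^T_i$ is a union of coordinate hyperplanes and $Z_i$ is a coordinate subspace (possible by the SNC assumption on $Z_i$ with $D^T_i$), the SNC condition of $C_i$ with $D^Z_i$ on $Z_i$ upgrades to SNC of $C_i$ with $D^T_i$ on $T_i$. Standard blow-up formulas then yield $Z_{i+1} = \mathrm{Bl}_{C_i} Z_i$ as the strict transform of $Z_i$ inside $T_{i+1} = \mathrm{Bl}_{C_i} T_i$, with $E^Z_{i+1} = E^T_{i+1} \cap Z_{i+1}$, and strict transforms of old components restrict compatibly; hence $D^Z_{i+1} = D^T_{i+1} \cap Z_{i+1}$.

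For (2) and (3), the vertex-level bijection is a direct consequence of (1) together with the codimension count. For higher strata, in \'etale local coordinates around a point of $\overline{Z}$, each irreducible component of $D^T_{j_0}\cap\cdots\cap D^T_{j_p}$ is locally a coordinate subspace whose intersection with $\overline{Z}$ (itself a coordinate subspace) is either empty or again a single coordinate subspace---i.e., a single component of $D^Z_{j_0}\cap\cdots\cap D^Z_{j_p}$. Distinct $D^T$-strata therefore yield distinct $D^Z$-strata whenever they meet $\overline{Z}$, giving (3). The simplicial map $\Delta_Z \hookrightarrow \Delta_T$ sending a $D^Z$-stratum to the unique minimal $D^T$-stratum containing it is then injective with face-closed image: any $D^T$-stratum containing one that meets $\overline{Z}$ itself meets $\overline{Z}$. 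This gives the subcomplex inclusion of (2).

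For (4), apply Proposition~\ref{cone2} on both sides simultaneously at stage $i+1$. The shared center $C_i\subset Z_i\subset T_i$ produces cone extensions of $\Delta(D^T_i)$ and $\Delta(D^Z_i)$ sharing the same new apex vertex $v_{C_i}$ (corresponding to $E^T_{i+1}$ and its restriction $E^Z_{i+1}$). A direct check using $C_i\subset Z_i$ and the stratum correspondence of (3) gives $\Delta^Z_{C_i} = \Delta^T_{C_i}\cap \Delta(D^Z_i)$ and $\Delta^{0,T}_{C_i}\cap \Delta(D^Z_i)\subseteq \Delta^{0,Z}_{C_i}$, from which $\Delta(D^Z_{i+1})\subseteq \Delta(D^T_{i+1})$ follows. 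To promote the inductive retract $r_i\colon |\Delta(D^T_i)|\to |\Delta(D^Z_i)|$ to $r_{i+1}$, send $v_{C_i}$ to itself and extend across the new cone using the homotopy invariance statements in Proposition~\ref{cone2}(4)--(5).

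The main obstacle lies in (4): the containment $\Delta^{0,T}_{C_i}\cap \Delta(D^Z_i)\subseteq \Delta^{0,Z}_{C_i}$ may be strict, because a $D^T$-stratum can meet $\overline{Z}$ in a locus inside $C_i$ without the whole stratum being inside $C_i$. Such simplices survive in $\Delta(D^T_{i+1})$ but drop out of $\Delta(D^Z_{i+1})$, so $r_i$ must first be corrected in a neighborhood of these simplices by collapsing them toward $v_{C_i}$ (which lies in both complexes) before extending over the new cone. The contractibility of $v_{C_i}*(\Delta^T_{C_i}\setminus \Delta^{0,T}_{C_i})$, established in the proof of Proposition~\ref{cone2}, makes this correction homotopically trivial.
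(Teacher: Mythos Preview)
Your approach matches the paper's: induction on the number of blow-ups, describing both dual complexes after each step as cone extensions via Proposition~\ref{cone2}, and handling the simplices in $\Delta^{0,Z}_{C}\setminus\Delta^{0,T}_{C}$ by collapsing them toward $v_C$ before extending the inductive retract.

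One point to sharpen: the paper proves and uses the \emph{equality} $\Delta^T_{C}=\Delta^Z_{C}$, not just your intersection form $\Delta^Z_{C}=\Delta^T_{C}\cap\Delta_Z$. This holds because $C\subset Z_i$, so any $D^T$-stratum meeting $C$ meets $\overline{Z}$ and hence already lies in $\Delta_Z$ by (3). That equality is what makes the retraction step go through cleanly: it implies $\Delta^{0,Z}_{C}$ is star-closed in all of $\Delta_T$ (so $|\Delta^{0,Z}_{C}|$ is open in $|\Delta_T|$), and since the inductive deformation retract fixes $\Delta_Z\supseteq\Delta^T_{C}$ pointwise, it restricts to the closed complement $|\Delta_T\setminus\Delta^{0,Z}_{C}|$ and then extends over the new cone by the identity on its base. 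With only the weaker intersection statement you wrote, it is not immediate that $r_i$ carries $v_C*(\Delta^T_{C}\setminus\Delta^{0,T}_{C})$ into $\Delta(D^Z_{i+1})$, since a priori $r_i$ could move points of $\Delta^T_{C}$ off that subcomplex.
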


\begin{proof} We use induction on the number of blow-ups $k$. If $k=0$ then both statements are obvious.
We can assume by the induction that
$\Delta_Z\subset \Delta_T$ and $\Delta_Z$ corresponds to those of intersection components of $T$ which intersect $\overline{Z}$. 

By the proof of the previous theorem each  component $D^T_\alpha$ is created  exactly after  the same blow-up   as the component $D^Z_\alpha$, with $D^Z_\alpha=D^T_\alpha\cap Z$. Morover the component $D^T_\alpha$ (or $D^Z_\alpha$)
will  vanish if the center of the blow-up $C$ contains $D^T_\alpha$ (or $D^Z_\alpha$). Thus if the component $D^T_\alpha$ vanishes then  $D^Z_\alpha$ also does. If $C$ contains $D^Z_\alpha$  but not $D^T_\alpha$ then the component $D^Z_\alpha$ will vanish while $D^T_\alpha$  will become disjoint from the strict transform of $Z$.
Let $C\subset Z$ be a smooth center having SNC with $ZS$. The new complexes $\Delta_{Z'}$, and $\Delta_{T'}$ are obtained from $\Delta_Z$ and $\Delta_T$ by the cone extensions at 
$(\Delta_{Z,C},\Delta^0_{Z,C})$ and $(\Delta_{T,C},\Delta^0_{T,C})$ respectively: 
$$\Delta_{Z'}=\Delta_Z\setminus \Delta^0_{Z,C}\cup \,v_C*(\Delta_{Z,C}\setminus \Delta^0_{Z,C}),$$ 
$$\Delta_{T'}=\Delta_T\setminus \Delta^0_{T,C}\cup \,v_C*(\Delta_{T,C}\setminus \Delta^0_{T,C})$$

Note that $$\Delta_{Z,C}=\{\tau\in \Delta_Z \mid D^Z_\tau\cap C\neq \emptyset\}=\{\tau\in \Delta_T \mid D^T_\tau\cap C\neq \emptyset\}=\Delta_{T,C}$$
 On the other hand 
 $$\Delta^0_{Z,C}=\{\tau\in \Delta_Z \mid D^Z_\tau\subset C\}\supseteq \{\tau\in \Delta_T \mid D^T_\tau\subset C\}=\Delta^0_{T,C}$$

This implies immediately that $\Delta_{Z'}$ is a subcomplex of $\Delta_{T'}$.

Moreover one can retract homotopically $$|\Delta_{T'}|=|\Delta_T\setminus \Delta^0_{T,C}\cup \,v_C*(\Delta_{T,C}\setminus \Delta^0_{T,C})|$$ to
$$|\Delta_T\setminus \Delta^0_{Z,C}\cup \,v_C*(\Delta_{T,C}\setminus \Delta^0_{Z,C})|$$
Consider a simplex $\tau$ of maximal dimension in $\Delta^0_{Z,C}\setminus \Delta^0_{T,C}$. Let $v_\tau$ be its barycenter. The face 
$\tau$ can be eliminated and $|\Delta_T|$ retracted to $|(\Delta_T)'\setminus \{\tau\}|$  by  putting $v_\tau
\mapsto (1-t)v_\tau+tv_C$, and mapping all vertices identically.
Note that any simplex $\tau'\in \Delta_T$ such that $\tau\leq \tau'$ is  necessarily in $\Delta^0_{Z,C}\subset \Delta_{T,C}$, and by maximality is in $\Delta^0_{T,C}$.  Thus   $\tau$ is a maximal face in $\Delta_{T'}$ and the retraction affects only the internal points of $\tau$ collapsing them to $v_C$. 
We transform $\Delta_{T'}$ to $\Delta_T\setminus (\Delta^0_{T,C}\setminus \{\tau\})\cup \,v_C*(\Delta_{T,C}\setminus (\Delta^0_{T,C}\setminus \{\tau\}))$
 By the repeating this peocedure one by one we collapse, or eliminate all simplices in $\Delta^0_{Z,C}\setminus \Delta^0_{T,C}$, thus transforming 
$\Delta_{T'}$ into 
$\Delta_T\setminus \Delta^0_{Z,C}\cup \,v_C*(\Delta_{T,C}\setminus \Delta^0_{Z,C})$.

Note that $\Delta^0_{Z,C}$ is star closed in $\Delta_T$. If $\tau \in \Delta^0_{Z,C}$ and $\tau\leq \tau'$ for $\tau'\in \Sigma_T$. Then $\tau' \in \Delta_{T,C}=\Delta_{Z,C}\subset \Delta_Z$.  But $\Delta^0_{Z,C}$ is star closed in $\Delta_Z$, and thus $\tau'\in \Delta^0_{Z,C}$.

This implies that $|\Delta^0_{Z,C}|$ is open in $|\Sigma_T|$.
The retraction $|\Delta_T|$ to $|\Delta_Z|$ transforms an open $|\Delta^0_{Z,C}|$ identically. Thus it extends to  the retraction of $|\Delta_T\setminus \Delta^0_{Z,C}|$ to $|\Delta_Z\setminus \Delta^0_{Z,C}|$
and the retraction of $|\Delta_T\setminus \Delta^0_{Z,C}\cup \,v_C*(\Delta_{Z,C}\setminus \Delta^0_{Z,C})|$ to
$|(\Delta_Z)'|=|\Delta_Z\setminus \Delta^0_{Z,C}\cup \,v_C*(\Delta_Z,C)\setminus \Delta^0_{Z,C})|$.

\end{proof}
\begin{corollary}\label{impo2} Consider any embedding $ Z\subset T$ of smooth varieties, and let $\overline{Z}\to Z$ be the sequence of blow-ups of centers having SNC with exceptional divisors, and $\overline{T}\to T$ be the induced sequence of blow-ups on $T$.
 Denote the exceptional (SNC) divisor of $\overline{Z}\to Z$ by $D^Z$,  and by $D^T$ be the exceptional divisor of $\overline{T}\to T$. Let  
$D^Z_1\subset\ldots\subset  D^Z_k\subset D^Z$ and  $D^T_1\subset\ldots\subset  D^T_k\subset D^T$ denote filtrations of divisors such that 
\begin{enumerate}
\item $D_i^Z=D_i^T\cap \overline{Z}$
\item  There is an inclusion of complexes $\Delta^Z_i:=\Delta(D^Z)\subset \Delta_T:=\Delta(D^T)$.
\item  Each intersection component $D^Z_\alpha$ is contained in a unique minimal component $D^T_\alpha$. Moreover the components $D^Z_\alpha$ are in bijective correspondence with the components of $D^T_\alpha$ intersecting $Z$.
\item $\Delta^Z$ is  a deformation retract of its subcomplex $\Delta^T$. Moreover the deformation transforms 
each $\Delta^T_i$ into $\Delta^T_i$

\end{enumerate} Let  $C\subset \overline{Z}$ be a center having SNC with $D^Z$ (and thus $D^T$). Denote by $D^{T'}$ and $D^{Z'}$ the full transformation of the divisors  $D^T$ and $D^Z$. Consider the natural filtration of $D^{T'}$ (or $D^{Z'}$) by 
$D^{T'}_i$ and $D^{Z'}_i$, where $D^{Z'}_i$ is the inverse image of  $D^{Z'}_i$, and $D^{Z'}_i$. 
Then the above conditions are satisfied after the  blow-up of $C$ for 

$D^{Z'}_1\subset\ldots\subset  D^{Z'}_k\subset D^{Z'}$ and  $D^{T'}_1\subset\ldots\subset  D^{T'}_k\subset D^{T'}$.

\end{corollary}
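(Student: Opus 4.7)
The plan is to mimic the one-step inductive argument from the proof of Lemma~\ref{impo} and observe that every construction there respects the filtration. Since that lemma is proved by induction on the number of blow-ups and we are adding only one further blow-up at $C$, verifying $(1)$--$(4)$ for the new divisors in a filtration-compatible way suffices.

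For $(1)$, since $C \subset \overline{Z}$ has SNC with $D^T$ (and hence with each $D^T_i$), the blow-up $\overline{T'} \to \overline{T}$ restricts to the blow-up $\overline{Z'} \to \overline{Z}$ at $C$, and the total transform of $D^T_i$ restricted to $\overline{Z'}$ is the total transform of $D^T_i \cap \overline{Z} = D^Z_i$; this gives $D^{Z'}_i = D^{T'}_i \cap \overline{Z'}$. For $(2)$ and $(3)$, I would apply Proposition~\ref{cone2} at each level $i$, writing
$$\Delta(D^{Z'}_i) = (\Delta_{Z,C,i},\Delta^0_{Z,C,i})\cdot\Delta^Z_i, \qquad \Delta(D^{T'}_i) = (\Delta_{T,C,i},\Delta^0_{T,C,i})\cdot\Delta^T_i,$$
with the same new vertex $v_C$ in both. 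The equality $\Delta_{Z,C,i} = \Delta_{T,C,i}$ and inclusion $\Delta^0_{T,C,i} \subseteq \Delta^0_{Z,C,i}$ follow from $(1)$ and $(3)$ for the old filtration; combined with $\Delta^Z_i \subset \Delta^T_i$ this yields the new inclusion of complexes and the bijective correspondence of intersection components (read off from parts $(1)$--$(3)$ of Proposition~\ref{cone2}).

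For $(4)$, I would build the new filtered retraction exactly as in the proof of Lemma~\ref{impo}: collapse the simplices of $\Delta^0_{Z,C} \setminus \Delta^0_{T,C}$ one by one onto $v_C$, then apply on the complement the old retraction $|\Delta^T| \to |\Delta^Z|$, which by hypothesis fixes the open star-closed set $|\Delta^0_{Z,C}|$ pointwise. The main obstacle, and only nontrivial point, is filtration compatibility: that a collapsed simplex $\tau \in \Delta^{T'}_i$ ends up inside $|\Delta^{Z'}_i|$. This reduces to two checks, both consequences of the old $(1)$ and $(3)$. First, the identity $\Delta^Z \cap \Delta^T_i = \Delta^Z_i$: if $\tau \in \Delta^T_i \cap \Delta^Z$, then each vertex of $\tau$ is a maximal component of $D^T_i$ meeting $\overline{Z}$, so by $(1)$ it is a component of $D^Z_i$. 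Hence $\tau \in \Delta^T_i \cap \Delta^0_{Z,C}$ already lies in $\Delta^0_{Z,C,i}$. Second, the SNC condition on $C$ with $D^Z$ together with the existence of some $\tau \in \Delta^0_{Z,C}$ forces $C$ to coincide locally with an intersection stratum $D^Z_{\tau'}$ for some $\tau' \leq \tau$; since in the situation above each vertex of $\tau$ (and hence of $\tau'$) lies in $\Delta^Z_i$, this intersection stratum is supported in $D^Z_i$, so $C \subset D^Z_i$ in support and $v_C$ is a legitimate vertex of $\Delta^{Z'}_i$. The collapse therefore remains inside $|\Delta^{Z'}_i|$, and the full retraction transports each $|\Delta^{T'}_i|$ into $|\Delta^{Z'}_i|$ as required.
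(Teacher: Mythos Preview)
Your proof is correct and shares its scaffolding with the paper's: both start from the cone-extension description of the blow-up, use the identities $\Delta_{Z,C}=\Delta_{T,C}$ and $\Delta^0_{T,C}\subseteq\Delta^0_{Z,C}$ coming from Lemma~\ref{impo}, and define the filtered pieces $\Delta_{Z,C,i}$, $\Delta^0_{Z,C,i}$ by intersection with $\Delta^Z_i$. The genuine difference is in how condition~(4) is verified. The paper argues by a trichotomy on the position of $C$: (i) $C$ not a component of $D^Z$, so both cone extensions are pure and the old retraction extends identically; (ii) $C$ a component of $D^Z$ but not of $D^T$, so one first retracts $|\Delta_{T'}|$ to $|\Delta_T|$ by moving $v_C$ to the barycenter $v_{\sigma_C}$ and then applies the old retraction, using that $\Delta_{Z'}$ is a star subdivision of $\Delta_Z$; (iii) $C$ a component of both, so both sides are star subdivisions at $\sigma_C$ and the old retraction passes through unchanged. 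You instead import the uniform ``collapse the simplices of $\Delta^0_{Z,C}\setminus\Delta^0_{T,C}$ onto $v_C$, then apply the old retraction'' mechanism verbatim from the proof of Lemma~\ref{impo}, and add only the filtration check via $\Delta^Z\cap\Delta^T_i=\Delta^Z_i$ and $\sigma_C\le\tau$. Your route is more economical and avoids the case split; it does lean implicitly on Proposition~\ref{cone3} (that for an SNC divisor a nonempty $\Delta^0_{Z,C}$ forces $C=D^Z_{\sigma_C}$ with $\sigma_C\le\tau$), which you invoke only in the phrase ``forces $C$ to coincide locally with an intersection stratum $D^Z_{\tau'}$''. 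The paper's case analysis makes the geometry in each situation more visible but is otherwise equivalent.
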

\begin{proof} The blow-up of the center $C$ defines cone extensions  $\Delta_{Z'}$ and $\Delta_{T'}$ of $\Delta^Z$ and $\Delta^T$ respectively  at $(\Delta_{Z,C},\Delta^0_{Z,C})$ and $(\Delta_{T,C},\Delta^0_{T,C})$. Moreover by the previous proof we know that $\Delta_{Z,C}=\Delta_{T,C}$ and $\Delta^0_{Z,C}$ contains $\Delta^0_{T,C}$ and is a star closed subset of both $\Delta^Z$ and $\Delta^T$.
Correspondingly $\Delta_{Z,C,i}=\Delta_{Z,C}\cap \Delta^Z_i$, and $\Delta^0_{Z,C,i}=\Delta^0_{Z,C}\cap \Delta^Z_i$.

Consider  3 cases. 

{\bf Case 1.} Assume the center  $C$ is not a component of $D_Z$.  Then it is also not a  component of  $D_T$. The complexes  
$\Delta_{Z'}$ and $\Delta_{T'}$ are obtained from $\Delta_Z$ and $\Delta_{T'}$ by pure cone extension at $\Delta_{Z,C}=\Delta_{T,C}$. Then the retraction of $|\Delta_{T}|$ to $|\Delta_{Z}|$ is identical on $|\Delta_{Z,C}|=|\Delta_{T,C}|$ and extends identically  to  the retraction of $|\Delta_{T'}|$ to $|\Delta_{Z'}|$. This is compatible with the retraction of filtration $|\Delta_{iT'}|$ to $|\Delta_{iZ'}|$.

{\bf Case 2.} Assume  the center $C$ is a component of $D^Z$ corresponding to $\sigma_C\in \Delta_Z$ but is not a component of $D_T$.  Then as before $\Delta_{Z,C}=\Delta_{T,C}$, and $\Delta^0_{Z,C}=\Star(\sigma_C,\Delta_{Z})$,  with  $\Delta^0_{T,C}=\emptyset$.

Then $|\Delta_{T'}|$ rectracts to $|\Delta_{T}|$ which further retracts by the inductive assumption to $|\Delta_{Z'}|=|\Delta^{Z}|$ as $\Delta_{Z'}$ is a star subdivision of $\Delta_{Z}$. The subcomplexes $\Delta_{T'i}$ are retracted to 
$\Delta_{T,i}$ (by moving $v$ to $v_\sigma$) and then further to
$\Delta_{Z'i}$  with $|\Delta_{Z'i}=|\Delta_{Z'i}|$. 

{\bf Case 3.} Assume the center $C$ is a component of both $D^Z$ and $D^T$ then $$\Delta_{Z,C}=\Delta_{T,C}=\Star(\sigma_C,\Delta_{Z})=\Star(\sigma_C,\Delta_{T}),$$
and  complexes $\Delta_{Z'}$ and $\Delta_{T'}$ are obtained from 
$\Delta_{Z}$ $\Delta_{T}$, by the star subdivision at $\sigma_C$. The retraction of $|\Delta_{T,C}|$ to $|\Delta_{Z,C}|$ is identical on $|\Star(\sigma_C,\Delta_{Z})|$ and defines the retarction of $|\Delta_{T',C}|$ to $|\Delta_{Z',C}|$ compatible with  filtration $(\Delta_{T'i})$.

\end{proof}
For any simplicial complex $\Sigma$ and is face $\sigma$ let $\overline{\sigma}$ be the set of all faces of ${\sigma}$. In particular the set
$\overline{\sigma}$ is subcomplex of $\Sigma$.
If $\tau\leq \sigma$ then the face map $i_{\tau,\sigma}: \tau \to \sigma$ extends to  a map (inclusion) between complexes $\overline{i_{\tau,\sigma}}: \overline{\tau} \to \overline{\sigma}$
 Moreover $\Sigma$ is a union of the subcomplexes $\overline{\sigma}$ for $\sigma\in \Sigma$, with the natural identifications of simplices determined by the maps $\overline{i_{\tau,\sigma}}$.

\begin{definition}
The {\it simplicial product} of simplices $\sigma_1=\sigma_1(e_0,\ldots,e_k)$ and $\sigma_2=\sigma_2(e'_0,\ldots,e'_n)$ with vertices respectively $e_0,\ldots,e_k$, and $e'_0,\ldots,e'_n$, is the  simplex $$\sigma_1\otimes\sigma_2=\sigma_1\otimes\sigma_2(e_{00},\ldots,e_{10},\ldots,e_{kn}),$$  
with the set of vertices $\{e_{00},\ldots,e_{10},\ldots,e_{kn}\}$ corresponding to the product $\{e_0,\ldots,e_k\}\times\{e'_0,\ldots,e'_n\}$.
\end{definition}

If $\tau_1\leq \sigma_1$, and $\tau_2\leq \sigma_2$ are defined by the inclusion of vertices then
there is a face map $$i_{\tau_1\otimes\tau_2,\sigma_1\otimes\sigma_2}:  \tau_1\otimes\tau_2 \to \sigma_1\otimes\sigma_2$$
which is also defined  by the inclusion of vertices. 
\begin{definition}
The {\it simplicial product} of simplicial complexes  $\Sigma_1$ and $\Sigma_2$ is  the simplicial complex
$\Sigma_1\otimes\Sigma_2$ which is a union of all the complexes $\overline{\sigma_1\otimes\sigma_2}$ where $\sigma_i\in \Sigma_i, i=1,2$, with inclusion maps $\overline{i_{\tau_1\otimes\tau_2,\sigma_1\otimes\sigma_2}}:  \overline{\tau_1\otimes\tau_2} \to \overline{\sigma_1\otimes\sigma_2}$ for $\tau_1\leq \sigma_1$, and $\tau_2\leq \sigma_2$
\end{definition}

The definition is, essentialy equiavalent to the definition of the product in the category of simplicial sets.

This notion is closely related to the {\it cartesian product} of 
simplicial complexes which is a polyhedral complex:
$$\Sigma_1\times\Sigma_2:=\{\sigma_1\times\sigma_2\mid \sigma_i\in \Sigma_i, i=1,2 \}$$

The face maps in $\Sigma_1\times\Sigma_2$ are given by the products of the face maps in $\Sigma_i$. As before $\Sigma_1\times\Sigma_2$ can be represented as the union of the complexes $\overline{\sigma_1\times\sigma_2}=\overline{\sigma_1}\times\overline{\sigma_2}$. 

\begin{lemma} If $D_1$ and $D_2$ are SNC divisors then $D_1\times D_2$ is a variety with SNC. Moreover $$\Delta(D_1\times D_2)=\Delta(D_1)\otimes\Delta(D_2).$$
\end{lemma}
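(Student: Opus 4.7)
The plan is to first verify the SNC property by producing \'etale local coordinates on the product, then match up maximal components (vertices), then match simplices (irreducible components of multi-intersections), and finally check that the face-inclusion maps agree on both sides.

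First I would fix smooth ambient varieties $X_1 \supset D_1$ and $X_2 \supset D_2$ with $D_1 = \bigcup_i D_{1,i}$ and $D_2 = \bigcup_j D_{2,j}$ the decompositions into smooth irreducible components, and take any point $p = (p_1,p_2) \in D_1 \times D_2$. Using the SNC hypothesis I pick \'etale neighborhoods $\phi_1:U_1 \to \mathbb{A}^{n_1}$ and $\phi_2:U_2 \to \mathbb{A}^{n_2}$ on which $D_1$ and $D_2$ pull back from unions of coordinate hyperplanes. Then $\phi_1 \times \phi_2: U_1\times U_2 \to \mathbb{A}^{n_1+n_2}$ is \'etale and the preimage of the corresponding union of coordinate subspaces (of possibly different dimensions) is exactly $(D_1\cap U_1)\times(D_2\cap U_2)$. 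Since each maximal stratum of a product of coordinate unions is itself a product of coordinate subspaces, $D_1\times D_2$ satisfies the definition of a variety with SNC, and its maximal components are precisely the products $D_{1,i}\times D_{2,j}$ (for $D_{1,i}$, $D_{2,j}$ maximal). These are in bijection with pairs of vertices of $\Delta(D_1)$ and $\Delta(D_2)$, i.e.\ with vertices of $\Delta(D_1)\otimes\Delta(D_2)$.

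Next I would identify the higher-dimensional simplices. A $p$-fold intersection of maximal components of $D_1\times D_2$ has the form
\[
\bigcap_{k=0}^{p}\bigl(D_{1,i_k}\times D_{2,j_k}\bigr) \;=\; \Bigl(\bigcap_k D_{1,i_k}\Bigr)\times\Bigl(\bigcap_k D_{2,j_k}\Bigr),
\]
and each factor decomposes into irreducible components indexed by simplices $\alpha\in\Delta(D_1)$ with vertex set $\supseteq\{i_k\}$ and $\beta\in\Delta(D_2)$ with vertex set $\supseteq\{j_k\}$. Thus the irreducible components of the intersection are precisely the products $D_{1,\alpha}\times D_{2,\beta}$, and the simplex of $\Delta(D_1\times D_2)$ attached to such a component is determined by the pair of simplices $(\alpha,\beta)$ together with the chosen vertex subset $\{(i_k,j_k)\}$. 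This is exactly the data of a face of $\alpha\otimes\beta$ in the simplicial product, so the map sending the simplex of $\Delta(D_1\times D_2)$ labelled by $(\alpha,\beta,\{(i_k,j_k)\})$ to the corresponding face of $\overline{\alpha\otimes\beta}$ gives a bijection between simplices of the two complexes.

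Finally I would check compatibility of face maps. If $\tau\leq\sigma$ in $\Delta(D_1\times D_2)$, it corresponds under the bijection to an inclusion of vertex sets inside some $\alpha\otimes\beta$, and the inclusion map $i_{\tau\sigma}$ is exactly $\overline{i_{\tau_1\otimes\tau_2,\,\alpha\otimes\beta}}$ restricted appropriately, because both are determined by the inclusion of vertices. The only genuinely delicate point, which I view as the main thing to watch, is the distinction stressed in the earlier remark that different simplices may correspond to the same subvariety (one must index simplices by the \emph{irreducible component} of the intersection, not just by the set of vertices); once one keeps careful track of $(\alpha,\beta)$ as labels, the simplicial-product construction accommodates exactly this ambiguity and the two complexes match on the nose. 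This yields the equality $\Delta(D_1\times D_2)=\Delta(D_1)\otimes\Delta(D_2)$.
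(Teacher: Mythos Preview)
Your proposal is correct and follows the same approach as the paper's proof, which is extremely terse (it simply observes that $D_1\times D_2=\bigcup_{i,j} D_{1i}\times D_{2j}$ and asserts that this yields the simplicial product). Your version supplies the details the paper omits: the \'etale-local check of the SNC property, the identification of intersection components with products $D_{1,\alpha}\times D_{2,\beta}$, and the matching of face maps.

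One small imprecision: where you write ``simplices $\alpha\in\Delta(D_1)$ with vertex set $\supseteq\{i_k\}$'', you want the vertex set of $\alpha$ to be \emph{equal} to the set $\{i_k\}$ of distinct first coordinates, not merely to contain it; the irreducible components of $\bigcap_k D_{1,i_k}$ are indexed exactly by the simplices of $\Delta(D_1)$ on that vertex set. This does not affect the rest of your argument, which correctly records the data $(\alpha,\beta,S)$ with $S\subset V(\alpha)\times V(\beta)$ surjecting onto both factors and matches it with a face of $\alpha\otimes\beta$.
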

\begin{proof} If $D_1=\sum D_{1i}$ and $D_2=\sum D_{2i}$ then
$D_1\times D_2=\bigcup D_{1j}\times D_{2i}$ which defines the dual complex $\Delta(D_1)\otimes\Delta(D_2)$.

\end{proof}

Observe that the star subdivision of a polyhedral complex $\Sigma$ at any of its vertices $v$ creates a new complex 
$$\Sigma'=\Sigma\setminus \Star(v,\Sigma)\cup v*\Link(v,\Sigma),$$ with the same set of vertices and all the faces containg $v$ are of the form $v*\sigma$, with $v$ independent of $\sigma$. Thus applying the star subdivisions to all the vertices of $\Sigma$ creates its triangulation, that is a simplicial complex $\Sigma^{simp}$  with the same geometric realization.

Let $\Sigma_1, \Sigma_2$ be two simplicial complexes. For any faces  $\sigma_1\in \Sigma_1$ and $\sigma_2\in \Sigma_2$ the bijective correspondence between the vartices defines  a natural projection $\phi_{\sigma_1,\sigma_2}: {\sigma_1\otimes\sigma_2}\to \sigma_1\times \sigma_2$ which agrees on the faces and extends to the map  $$\phi_{\sigma_1,\sigma_2}: \overline{\sigma_1\otimes\sigma_2}\to \overline{\sigma_1\times\sigma_2}$$ and 
which globalizes to $$\phi: \Sigma_1\otimes\Sigma_2\to \Sigma_1\times\Sigma_2,$$
and the map of its geometric realizations:
$$|\phi|: |\Sigma_1\otimes\Sigma_2|\to |\Sigma_1\times\Sigma_2|.$$

Note that the map $\phi$ is the product of two natural projections $\pi_1:\Sigma_1\otimes\Sigma_2\to \Sigma_1$ and $\pi_2:\Sigma_1\otimes\Sigma_2\to \Sigma_2$, induced by the projections of the vertices.
On the other hand the inclusion maps defined on vertices induces 
a unique inclusion map 
$$i_{\sigma_1,\sigma_2}:  \overline{(\sigma_1\times\sigma_2)}^{simp}\to \overline{\sigma_1\otimes\sigma_2}$$ 
extending to $$i:  (\Sigma_1\times\Sigma_2)^{simp}\to\Sigma_1\otimes\Sigma_2.$$
and the closed embedding of the geometric realizations:
$$|i|: |(\Sigma_1\times\Sigma_2)^{simp}|=|(\Sigma_1\times\Sigma_2)^{simp}|\to |\Sigma_1\otimes\Sigma_2|.$$
\begin{lemma}\label{product}
The maps $|i|$ and $|\phi|$ defines a homotopy equivalence between the geometric realizations of both complexes. That is 
$$|\Sigma_1\otimes\Sigma_2| \simeq |(\Sigma_1\times\Sigma_2)^{simp}|=|(\Sigma_1\times\Sigma_2)|$$

\end{lemma}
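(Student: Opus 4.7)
The strategy is to show that $|i|$ and $|\phi|$ are homotopy inverses. First I would check that $|\phi|\circ|i|=\mathrm{id}$ on $|(\Sigma_1\times\Sigma_2)^{simp}|=|\Sigma_1\times\Sigma_2|$. Each cell $\overline{\sigma_1\otimes\sigma_2}$ is the full simplex on the vertex set $\{(e_i,e'_j)\}$, so every simplex of $(\sigma_1\times\sigma_2)^{simp}$ is automatically a face of $\overline{\sigma_1\otimes\sigma_2}$; this is what makes $i_{\sigma_1,\sigma_2}$ a bona fide simplicial embedding. Both $i_{\sigma_1,\sigma_2}$ and $\phi_{\sigma_1,\sigma_2}$ are affine maps determined by the identification of vertices $(e_i,e'_j)$ in the two complexes, so $\phi_{\sigma_1,\sigma_2}\circ i_{\sigma_1,\sigma_2}$ fixes every vertex and hence equals the identity on the polytope $\overline{\sigma_1\times\sigma_2}$. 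Gluing along face maps gives the claim globally.

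For the converse, I would build a straight-line homotopy. For a single simplex $\overline{\sigma_1\otimes\sigma_2}$, the image of $i_{\sigma_1,\sigma_2}\circ\phi_{\sigma_1,\sigma_2}$ lies inside the convex set $|\overline{\sigma_1\otimes\sigma_2}|$, so one may define
\[
H^{\sigma_1,\sigma_2}_t(x):=(1-t)\,x+t\cdot\bigl(i_{\sigma_1,\sigma_2}\circ\phi_{\sigma_1,\sigma_2}\bigr)(x),\qquad x\in|\overline{\sigma_1\otimes\sigma_2}|,\ t\in[0,1].
\]
This interpolates continuously between $\mathrm{id}$ and $i\circ\phi$ inside each simplex.

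The main (and essentially only) point to verify is that the local homotopies assemble into a continuous global homotopy $H_t\colon|\Sigma_1\otimes\Sigma_2|\times[0,1]\to|\Sigma_1\otimes\Sigma_2|$. This reduces to the compatibility of $\phi$ and $i$ with face inclusions $\overline{\tau_1\otimes\tau_2}\hookrightarrow\overline{\sigma_1\otimes\sigma_2}$ for $\tau_1\leq\sigma_1$, $\tau_2\leq\sigma_2$: both maps are defined at the vertex level and extended affinely, and the face map is itself affine on vertices, so $i_{\sigma_1,\sigma_2}\circ\phi_{\sigma_1,\sigma_2}$ restricts to $i_{\tau_1,\tau_2}\circ\phi_{\tau_1,\tau_2}$ on $|\overline{\tau_1\otimes\tau_2}|$, and therefore $H^{\sigma_1,\sigma_2}_t$ restricts to $H^{\tau_1,\tau_2}_t$. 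The pieces thus glue, yielding $|i|\circ|\phi|\simeq\mathrm{id}$ and, combined with the first step, the homotopy equivalence $|\Sigma_1\otimes\Sigma_2|\simeq|(\Sigma_1\times\Sigma_2)^{simp}|=|\Sigma_1\times\Sigma_2|$; the last equality is by construction since the triangulation procedure leaves the underlying topological space unchanged.
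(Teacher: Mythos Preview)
Your proposal is correct and follows essentially the same approach as the paper: the paper also verifies $\phi\circ i=\mathrm{id}$ and then uses the straight-line homotopy $y_t=(1-t)y+t\cdot i(\phi(y))$ to show $i\circ\phi\simeq\mathrm{id}$. Your version is simply more explicit about why the convex combination is well-defined and why the local homotopies glue along faces, points the paper leaves implicit.
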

\begin{proof} Indeed $\phi i=id_{(\Sigma_1\times\Sigma_2)^{simp}}$. On the other hand consider the homotopy which takes $y\in |\Sigma_1\otimes\Sigma_2|$ to 
 $y_t=  (1-t)y+t \cdot i(\phi(y))$ which defines homotopical equivalence of $id_{|\Sigma_1\otimes\Sigma_2|}$ and $i\phi:|\Sigma_1\otimes\Sigma_2|\to |\Sigma_1\otimes\Sigma_2|$.

\end{proof}

\section{Combinatorial type of varieties}

The following theorem gives us a  functorial correspondence between quasiprojective varieties and the their {\it combinatorial types}, that is the homotopy types of certain associated complexes. It is  a generalization of the correspondence between SNC divisors and their dual complexes. While the notion of the combiantorial type is defined for any schemes of finite types, the  induced maps between combinatotial types are considered  only for quasiprojective varieties.

\begin{theorem} Let $K$ be a field of characteristic zero.
One can associate with any scheme  $X$ of finite type over $K$ its {\bf combinatorial type} $\Sigma(X)$ that is a unique canonical homotopy type of a finite complex such that 
\begin{enumerate}
\item If $X$ is any variety with SNC crossings then  $\Sigma(X)$ is the homotopy type of its dual complex $\Delta(X)$.

\item For any morphism $\phi: X\to Y$ of the quasiprojective schemes of the finite type there is a canonial topological 
map $\Sigma(f): |\Sigma(X)|\to |\Sigma(Y)|$ defined uniquely up to homotopy.
\item  For any  morphisms $Z \buildrel{\phi}\over \to Y\buildrel{\psi}\over \to X$ of quasiprojective varieties the maps of the topological spaces:
$$\Sigma(\psi\phi): |\Sigma(Z)|\to |\Sigma(X)|\,\,\,\mbox{and}\,\,\, \Sigma(\psi)\Sigma(\phi) : |\Sigma(Z)|\to |\Sigma(X)|$$ are homotopy equivalent 

\item  $\Sigma(X\times Y)=\Sigma(X)\otimes \Sigma(Y)\simeq |\Sigma(X)\times \Sigma(Y)|$, and for the projection $\pi_Y: X\times Y\to Y$, the induced map $\Sigma(\pi_Y): |\Sigma(X)|\times |\Sigma(Y)|\to |\Sigma(Y)|$ is homotopy equivalent to the projection \\$\pi_{|\Sigma(Y)|}: |\Sigma(X)|\times |\Sigma(Y)|\to |\Sigma(Y)|$.

\item If $\phi: Z_1\dashrightarrow Z_2$ is a proper birational map of smooth varieties which transforms closed subvariety $X_1\subset Z_1$ to $X_2\subset Z_2$ then $\Sigma(X_1)=\Sigma(X_2)$.

\item If $X=X_1\cup X_2$  is a union of the closed subvarieties $X_1$ and $X_2$ then 
$\Sigma(X)$ is a push out of $\Sigma(X_1)\leftarrow \Sigma(X_1\cap X_2)\to \Sigma(X_2)$.  
\item
 If $X$ is a complex projective variety then $W_0(H^i(X,\CC))\simeq H^i(\Sigma(X),\CC)$.

\end{enumerate}

\end{theorem}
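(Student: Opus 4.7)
The plan is to \emph{define} $\Sigma(X)$ through an embedded resolution and then verify the seven properties using the cone extension machinery of Section~2 together with weak factorization. For (1), pick any embedding $X\hookrightarrow Z$ into a smooth quasiprojective $Z$ and apply extended Hironaka desingularization (Definition~1.4) to get $\overline{Z}\to Z$ whose total transform of $X$ is a variety $\overline{X}$ with SNC; set $\Sigma(X):=\Delta(\overline{X})$ up to homotopy. When $X$ is already SNC, Theorem~\ref{th: 2}(3) and Proposition~\ref{cone2} show that no resolution is needed and the definition agrees with $\Delta(X)$, giving (1).

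For well-definedness, two auxiliary resolutions can be dominated by a common one via the functoriality part of Theorems~\ref{th: 3}--\ref{th: 1}; each additional blow-up along a center having SNC with the existing divisor alters the dual complex by a cone extension, and Propositions~\ref{cone2}--\ref{cone3} show these cone extensions are homotopy equivalences (cases (4) and (5)). Independence of the ambient embedding is exactly Lemma~\ref{impo}, which provides a deformation retract $\Delta_T\to \Delta_Z$; comparing two embeddings via the diagonal inside their product and then passing through a common resolution handles this. The overall uniqueness up to homotopy then follows from weak factorization applied to pairs of resolutions, combined with Corollary~\ref{impo2} which ensures the homotopy equivalences are compatible through sequences of blow-ups.

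For functoriality in (2) and (3) I would first resolve $Y$ to $\overline{Y}$ with SNC, then resolve $X\times_Y \overline{Y}$, and finally use the SNC morphism theorem (announced for Sections~4--6) to further modify so that the induced map $\overline{X}\to\overline{Y}$ sends strata to strata. A stratum-preserving morphism of SNC varieties induces a simplicial map of dual complexes, giving $\Sigma(\phi)$; composition $\Sigma(\psi)\Sigma(\phi)\simeq\Sigma(\psi\phi)$ follows by choosing a common resolution dominating both $\phi$ and $\psi\phi$ and invoking uniqueness. For (4), use Lemma~\ref{product}: an SNC resolution of $X\times Y$ can be taken as the product of SNC resolutions of the factors, and the identification $\Delta(\overline{X}\times\overline{Y})=\Delta(\overline{X})\otimes\Delta(\overline{Y})\simeq|\Delta(\overline{X})|\times|\Delta(\overline{Y})|$ handles both the identification and the projection compatibility.

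For (5), weak factorization reduces a birational map between smooth varieties to a sequence of blow-ups along smooth centers having SNC with the strict transforms; Propositions~\ref{cone2} and \ref{cone3} show each such step preserves the homotopy type of $\Delta(\overline{X}_i)$. For (6), apply simultaneous embedded principalization (Theorem~\ref{th: 1}) to the pair of ideals $\mathcal{I}_{X_1},\mathcal{I}_{X_2}$ in a common ambient variety; the resulting $\overline{X}_1,\overline{X}_2$ and their intersection are all SNC, and for dual complexes of SNC varieties one has the pushout $\Delta(\overline{X}_1\cup\overline{X}_2)=\Delta(\overline{X}_1)\cup_{\Delta(\overline{X}_1\cap\overline{X}_2)}\Delta(\overline{X}_2)$ directly from the definition. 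Property (7) follows from Deligne's description of $W_0H^i$ for an SNC compactification of a smooth variety together with birational invariance (5) applied to the complement: it is classical that $W_0H^i(U,\mathbb{Q})$ is computed by the dual complex of the boundary divisor, and our construction reduces the projective case to this picture. The main obstacle will be step (2)--(3): producing a \emph{canonical} (up to homotopy) map on dual complexes from a morphism of arbitrary schemes requires the SNC-morphism resolution theorem of Section~6, and verifying that different such resolutions give homotopic induced maps is the combinatorial heart of the argument, since it forces control of the simplicial maps through the cone extensions of Corollary~\ref{impo2}.
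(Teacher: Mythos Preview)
Your construction of $\Sigma(X)$ and the verification of (1), (4), (6) are broadly aligned with the paper's approach, though the paper leans more heavily on the external input Theorem~\ref{intro} (from \cite{abw}) and on Proposition~\ref{impo3} for well-definedness rather than on dominating two resolutions by a third.

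The substantive gap is in your treatment of (2) and (3). You propose to resolve $Y$, pull back, and then use the SNC-morphism results of Sections~4--6 to produce a morphism $\overline{X}\to\overline{Y}$ of SNC varieties that ``sends strata to strata,'' and you assert that such a morphism induces a simplicial map of dual complexes. This does not work. First, after resolving $Y$ you obtain a \emph{smooth} variety, not an SNC one; $\Sigma(Y)$ lives on the exceptional divisor of an embedded resolution of $Y$ inside some ambient smooth $Z_Y$, and there is no morphism from $X$ (or any modification of it) to that exceptional divisor. Second, and more fundamentally, an SNC morphism in the sense of Section~4 is a condition on the \emph{fibers}; it gives no control over which maximal component of the target a given maximal component of the source lands in. The only mechanism in the paper for producing a simplicial map $\Delta(E)\to\Delta(D)$ is Lemma~\ref{map}, which requires a closed embedding $E\subset D$ with each maximal $E_i$ contained in a unique maximal $D_i$. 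A general morphism does not supply this.

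The paper's route is entirely different: it factors an arbitrary morphism $f:X\to Y$ through its graph as a closed embedding $X\hookrightarrow X\times Y$ followed by a projection $X\times Y\to Y$. Closed embeddings are handled in Step~2 by resolving the smaller variety first and then the larger, producing an honest inclusion of SNC divisors and hence of dual complexes (Lemma~\ref{impo5}). Projections are handled in Step~3 via the product identification (Lemmas~\ref{map2} and~\ref{impo7}), which reduces to the closed embedding $Y\simeq\{x\}\times Y\hookrightarrow Z_X\times Y$. Functoriality (3) is then a diagram chase (Step~4) comparing the two graph factorizations of $g\circ f$. The SNC-morphism theorems of Sections~4--6 play no role here; they are used only for the fiberwise constructibility results of Section~6.

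Your argument for (5) also skips a step: Propositions~\ref{cone2} and~\ref{cone3} describe how the dual complex of an \emph{existing} SNC variety changes under a blow-up with SNC center, but in the weak-factorization chain between $Z_1$ and $Z_2$ the intermediate transforms of $X_1$ are not SNC and the centers have no reason to be compatible with them. The paper obtains (5) as a corollary of Proposition~\ref{impo3}, which in turn cites Theorem~\ref{intro} from~\cite{abw}.
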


The theorem will be proven in a few steps. First we introduce the definition of the combinatorial type for quasiprojective varieties.

{\bf Step 1.} {\bf The combinatrial type of quasiprojective  varieties.}

 Given a  quasiprojective scheme of finite type $X$  one  can associate with it its combinatorial type. Embed $X$ into a smooth quasiprojective   variety $Z$, and consider  the extended Hironaka  desingularization $\overline{Z}\to Z$ of $X\subset Z$. It defines an SNC divisor $D\subset Z$. 
We define the {\it  combinatorial type} of $X$, denoted $\Sigma(X)$ to be the homotopy type of the dual complex $\Delta(D)$.

We will  show that  the combinatorial type $\Sigma(X)$ of $X$ does not depend upon the embedding and desingularization or principalization which can be used instead of desingularization. 
 \begin{proposition} \label{22} Let $s$ be the number of blow-ups used in the extended canonical desingularization of $X$, and $n=\dim(X)$ be its dimension. 
 For any extended canonical desingularization of $X\subset Z$, where $m>s+n$ the dual complex $\Delta(D_X^Z)$ of the exceptional divisor $D_X^Z$ is completely determined by the intersections of the centers of blow-ups on the strict transform of $X$ with excepional divisors. Thus it depends only on the strict transforms and it is independent of the embedding. 
 
 More precisely denote by $D^i$ for $i=1,\ldots,s$ the exceptional divisors on the  varieties $Z^i$ obtained by consecutive blow-ups of $Z$ with $D^s=D$.
 Then $\Delta^{can}(X)=\Delta(D_X^Z)$ is obtained by the sequence of the pure cone extensions of $\Delta^i=\Delta(D^i)$ at the canonical subcomplexes 
 $\Delta^i_C$, where $$\Delta^i_C=\{\tau \in \Delta^i\mid D_\tau\cap C \neq 0\}$$ is independent of the ambient varieties $Z^i$ (and depends only on the intersections of $D^i$ with the strict transforms of $X$).
 
\end{proposition}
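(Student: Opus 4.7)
The plan is to exploit the dimension hypothesis $m > s+n$ to force every blow-up in the extended desingularization to induce a \emph{pure} cone extension, and then to identify the data of each pure cone extension with intrinsic intersection data on the strict transform of $X$.

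First I would fix an extended canonical desingularization $Z = Z^0 \leftarrow Z^1 \leftarrow \cdots \leftarrow Z^s$, with centers $C_i \subset Z^i$, exceptional divisors $D^i$, and strict transforms $X_i$ of $X$. By the statement of the Hironaka theorems cited in Section 1, each $C_i$ is contained in $X_i$, so $\dim C_i \leq n$. On the other hand, every intersection component of $D^i$ has dimension $\geq \dim Z^i - i = m - i \geq m - s > n$. Thus no component of $D^i$ is contained in any $C_i$; equivalently, with the notation of Proposition \ref{cone2}, $\Delta^0_{C_i} = \emptyset$. By part (5) of Proposition \ref{cone2} (or part (3) of Proposition \ref{cone3} in the divisorial case), the passage from $\Delta^i$ to $\Delta^{i+1}$ is the pure cone extension at $(\Delta^i_{C_i}, \emptyset)$. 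This takes care of the structural claim: $\Delta^{can}(X)$ is built from successive pure cone extensions.

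Second, I would verify that the subcomplex $\Delta^i_{C_i}$ depends only on the strict transform $X_i$ and not on the ambient $Z^i$. By definition, $\Delta^i_{C_i} = \{\tau \in \Delta^i : D^i_\tau \cap C_i \neq \emptyset\}$. Since $C_i \subset X_i$, we have $D^i_\tau \cap C_i = (D^i_\tau \cap X_i) \cap C_i$, so the membership of $\tau$ in $\Delta^i_{C_i}$ is determined by how the stratum $D^i_\tau \cap X_i$ meets $C_i$ inside $X_i$. By Lemma \ref{impo}, applied inductively to the embedding $X_i \subset Z^i$ (or rather to the smooth ambient varieties produced at each stage), the intersection components of $D^i$ with $X_i$ are in canonical bijective correspondence with a subcomplex, and this correspondence is determined by the inductive construction on $X$ alone. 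Moreover the centers $C_i$ themselves are determined by the canonical Hironaka algorithm applied on the strict transform of $X$, via functoriality under the smooth embedding $X_i \hookrightarrow Z^i$ (Theorem \ref{th: emde}, part (4)).

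Third, to prove independence of the embedding, I would compare two embeddings $X \subset Z$ and $X \subset Z'$ with $\dim Z, \dim Z' > s+n$ by considering the diagonal embedding $X \subset Z \times Z'$ and the two projections. Functoriality of the Hironaka algorithm with respect to the smooth projections $Z \times Z' \to Z$ and $Z \times Z' \to Z'$ means the resolutions on $Z$ and $Z'$ are pulled back from the common resolution on $Z \times Z'$, and by the previous paragraph the combinatorial data $\{(\Delta^i, \Delta^i_{C_i})\}$ agree after restriction to the strict transform of $X$ in all three ambient varieties. Hence the sequence of pure cone extensions, and thus the homotopy type $\Delta^{can}(X)$, is the same.

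The main obstacle I expect is the bookkeeping in the inductive step: one must verify that the identification of $\Delta^i_{C_i}$ with intrinsic data on $X_i$ is compatible with the pure cone extension at each stage, so that the bijective correspondence from Lemma \ref{impo} (and Corollary \ref{impo2}) is preserved through the entire sequence. This is exactly the content of Corollary \ref{impo2}, which ensures that the star-closed subsets $\Delta^0_{Z,C}$ and $\Delta^0_{T,C}$ behave coherently under blow-up; invoking it inductively together with the dimension bound $m > s+n$ (which rules out Cases 2 and 3 of that corollary, leaving only the pure cone extension Case 1) should close the argument.
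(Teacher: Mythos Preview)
Your core argument is exactly the paper's: the dimension bound $m>s+n$ forces every intersection component of $D^i$ to have dimension $\geq m-s>n$, while each center $C_i$ lies in the strict transform $X_i$ and so has dimension $\leq n$; hence $\Delta^0_{C_i}=\emptyset$ and each step is a pure cone extension governed by $\Delta^i_{C_i}$, which is visibly determined by intersections on $X_i$. The paper phrases this as: a component $D^s_{j_1,\ldots,j_k}$ is \emph{created} precisely when $D^{s'}_{j_1,\ldots,j_{k-1}}$ meets $C_{j_k}\subset X_{j_k}$, and once created it can never be destroyed by a later center; so the final simplicial complex is read off from creation events on the strict transforms alone.

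Two of your elaborations should be dropped. First, Lemma~\ref{impo} is stated for an embedding $Z\subset T$ of \emph{smooth} varieties, so it does not apply to the singular strict transform $X_i\subset Z^i$; you do not need it anyway, since $C_i\subset X_i$ already gives $D^i_\tau\cap C_i=(D^i_\tau\cap X_i)\cap C_i$, making $\Delta^i_{C_i}$ intrinsic once $\Delta^i$ is (induction). Second, the diagonal-embedding comparison in your third paragraph does not work as written: functoriality of the Hironaka algorithm under the smooth projection $Z\times Z'\to Z$ relates the resolution of $X\subset Z$ to that of its \emph{preimage} $X\times Z'\subset Z\times Z'$, not to the diagonal copy of $X$. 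The paper avoids this entirely: since the sequence of centers and the sets $\Delta^i_{C_i}$ are already shown to live on the strict transforms (which are canonical by Theorem~\ref{th: 3}), no comparison of ambient varieties is needed. Your first two paragraphs, with the induction made explicit, already constitute the full proof; the appeals to Lemma~\ref{impo}, Corollary~\ref{impo2}, and the diagonal trick are superfluous.
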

\begin{proof}
In fact  we can assume that the exceptional divisors $D^\cdot_1,\ldots, D^\cdot_k$ are ordered by the sequence of blow-ups. The component $D^{  \cdot s}_{j_1,\ldots,j_k}$ is created when  the component $D^{  \cdot s'}_{j_1,\ldots,j_{k-1}}$ intersects the center $C_{j_k}$ on the strict transform $X_{j_k}$ of $X$. Moreover it becomes the exceptional divisor of the component of the intersection $C_{j_k}\cap D^{s'}_{j_1,\ldots,j_{k-1}}$  on $D^{s'}_{j_1,\ldots,j_{k-1}}$. Once the coponent is created it will define a nonsingular subvariety of codimension $\leq s$ so of dimension $>n$. The consecutive centers are contained in the strict transforms of $X$, and are of dimension $\leq n$ so they won't affect the created components $D^s_{j_1,\ldots,j_k}$.
Thus the component $D^s_{j_1,\ldots,j_k}$ depends entirely on its creation on the strict transform of $X$ and is independent on the embedding. Similarly the intersection component $D^s_{j_1,\ldots,j_k}\cap \overline{X}$ depends on the strict transform $\overline{X}$ of $X$, and the restrictions of the exceptional divisors to $\overline{X}$.

\end{proof}

\begin{proposition} For any quasiprojective variety $X$ there exists a canonical dual complex $\Delta^{can}(X)$ which is functorial with respect to smooth morphisms such that 
\begin{enumerate}
\item For any embedding $X\hookrightarrow Z$ into a smooth variety $Z$ of sufficiently large dimension $\Delta^{can}(X)=\Delta(D^Z_X)$

\item Any  smooth morphism $\phi: Y\to X$  defines the map of complexes $\Delta^{can}(\phi): \Delta^{can}(Y)\to \Delta^{can}(X)$
\item For any smooth morphisms $Z\to Y \to X$ we have  that
$$\Delta^{can}(\psi\phi)=\Delta^{can}(\psi)\Delta^{can}(\phi)$$ 
 \item If $\phi: U\to X$ is an open inclusion then $\Delta^{can}(\phi) :\Delta^{can}(U)\subset\Delta^{can}(X)$ is a canonical inclusion of complexes.
\end{enumerate}

\end{proposition}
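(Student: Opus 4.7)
My plan is to define $\Delta^{can}(X)$ as the dual complex produced by an embedded extended Hironaka desingularization in a sufficiently large smooth ambient, and to derive the functorial maps from the functoriality of the canonical Hironaka under smooth morphisms of ambient varieties. For (1), I take a closed embedding $X\hookrightarrow Z$ with $\dim Z>s+n$ and set $\Delta^{can}(X):=\Delta(D^Z_X)$. By Proposition \ref{22} this complex is determined by the canonical Hironaka centers on the successive strict transforms of $X$ (and their intersections with the restrictions of the exceptional divisors), hence is independent of $Z$. Consequently two embeddings $X\hookrightarrow Z_1$ and $X\hookrightarrow Z_2$ of sufficiently large dimension produce canonically isomorphic simplicial complexes, making $\Delta^{can}(X)$ well defined up to canonical isomorphism.

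For (2), given a smooth morphism $\phi:Y\to X$, the central construction is to realize $\phi$ as the restriction to $Y$ of a smooth morphism of smooth ambient varieties in which $Y$ is the exact scheme-theoretic preimage of $X$. I would fix an embedding $X\hookrightarrow Z_X$ and construct an embedding $Y\hookrightarrow Z_Y$ together with a smooth morphism $\tilde{\phi}:Z_Y\to Z_X$ satisfying $\tilde{\phi}|_Y=\phi$ and $Y=\tilde{\phi}^{-1}(X)$. Locally such an extension exists because a smooth morphism factors Zariski-locally as étale over $X\times\mathbb{A}^n$, so one can take $Z_Y$ locally étale over $Z_X\times\mathbb{A}^n$; the local pieces glue globally using the quasiprojectivity of $Y$, with the consistency of subsequent resolutions enforced by the canonical nature of the Hironaka algorithm. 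Theorem \ref{th: 2}(4) then gives that the canonical embedded Hironaka of $Y\subset Z_Y$ is the base change via $\tilde{\phi}$ of that of $X\subset Z_X$, producing a smooth morphism $\bar{\tilde{\phi}}:\bar{Z}_Y\to\bar{Z}_X$ with $D^{Z_Y}_Y=\bar{\tilde{\phi}}^{-1}(D^{Z_X}_X)$. Each irreducible component of the preimage maps onto a unique component of $D^{Z_X}_X$, and intersection incidences are preserved, which yields a natural simplicial map $\Delta(D^{Z_Y}_Y)\to\Delta(D^{Z_X}_X)$ that I take as $\Delta^{can}(\phi)$. Independence from the choice of $\tilde{\phi}$ is established by comparing any two choices through a common larger ambient, reducing to the argument for (1).

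For (3), compatible embeddings for $Z\to Y\to X$ fit into a tower $Z_Z\to Z_Y\to Z_X$ with $Y=Z_Y\times_{Z_X}X$ and $Z=Z_Z\times_{Z_Y}Y=Z_Z\times_{Z_X}X$, so transitivity of base change yields $\bar{\tilde{\psi\phi}}=\bar{\tilde{\psi}}\circ\bar{\tilde{\phi}}$ and hence the composition of the induced simplicial maps. For (4), an open immersion $U\hookrightarrow X$ admits the canonical choice $Z_U:=Z_X\setminus(X\setminus U)$, an open subset of $Z_X$; since the canonical Hironaka commutes with restriction to opens, $D^{Z_U}_U=D^{Z_X}_X\cap\bar{Z}_U$, its components inject into components of $D^{Z_X}_X$ (meeting $\bar{Z}_U$), and the induced simplicial map is the inclusion of the corresponding subcomplex. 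The main obstacle will be the global construction of compatible ambients $Z_Y\to Z_X$ realizing $Y=\tilde{\phi}^{-1}(X)$ for an arbitrary smooth $\phi$ and the verification that the induced simplicial map is independent of auxiliary choices; this argument leans critically on the canonical, Zariski-local, smooth-functorial nature of the Hironaka algorithm to guarantee that different compatible ambients produce canonically identified resolutions on overlaps.
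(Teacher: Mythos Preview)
Your treatment of (1) matches the paper: you invoke Proposition~\ref{22} to see that $\Delta(D^Z_X)$ is determined by the centers on the strict transforms of $X$ and their intersections with the restricted exceptional divisors, hence is independent of the ambient $Z$ once $\dim Z>s+n$.

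For (2)--(4), however, you take a genuinely different route from the paper, and your route contains a real gap. You attempt to realize an arbitrary smooth $\phi:Y\to X$ as the restriction of a smooth morphism of smooth ambients $\tilde\phi:Z_Y\to Z_X$ with $Y=\tilde\phi^{-1}(X)$, and then to apply the embedded functoriality of Theorem~\ref{th: 2}(4). The difficulty you flag---globally gluing local ambient extensions---is not resolved by ``quasiprojectivity of $Y$'' or by invoking canonicity of the algorithm. Extending a smooth morphism from a closed subscheme to a smooth morphism of smooth ambients with the prescribed fibre product condition is a nontrivial lifting problem; there is no general mechanism producing such a global $Z_Y$, and different local choices of $Z_Y$ need not glue to a scheme (let alone a smooth one over $Z_X$). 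Your subsequent independence argument (``comparing two choices through a common larger ambient'') presupposes the existence of such choices in the first place.

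The paper avoids this entirely. The point of Proposition~\ref{22} is not only that $\Delta^{can}(X)$ is independent of $Z$, but that each simplex $\Delta^s_{j_1,\ldots,j_k}$ is canonically identified with an intersection component $D^{s'}_{j_1,\ldots,j_{k-1}}\cap C_{j_k}$ living on the strict transform $X_{j_k}$ of $X$---an object intrinsic to $X$ and the canonical algorithm, with no reference to the ambient. A smooth morphism $\phi:Y\to X$ then lifts, by the smooth functoriality of canonical Hironaka (Theorem~\ref{th: 3}(2)), to smooth morphisms $Y_k\to X_k$ between the successive strict transforms, carrying the pulled-back centers and restricted exceptional divisors to one another. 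This immediately produces the map on intersection components, hence the simplicial map $\Delta^{can}(\phi)$, with (3) and (4) following from composability of these liftings and the behaviour of the algorithm under open restriction. No ambient extension is needed. You should rework (2)--(4) along these intrinsic lines rather than through ambient smooth extensions.
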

\begin{proof} Follows immediately from the canonicity of the  Hironaka (extended) desingularization. Note that any simplex $\Delta^s_{j_1,\ldots,j_k}$ in $\Delta^{can}(Y)$ can be idntified canonically with the subset $D^s_{j_1,\ldots,j_{k-1}}\cap C_k$ on the strict transform $X_k$ of $X$. This gives functoriality of $\Delta(\phi)$  defined for the smooth maps $\phi: X\to X'$, since such a map necessairily extends to $X_k\to X'_k$, defining the unique maps between components transforming divisor into divisors.

\end{proof}

By the proposition one can construct the canonical dual complex for any scheme $X$ of finite type. Consider an open affine cover $(X_i)$ of $X$  and construct $\Delta^{can}(X)$   by gluing the complexes $\Delta^{can}(X_i)$ along $\Delta^{can}(X_i\cap X_j)$.

\begin{corollary}  If $X=X_1\cup X_2$ then there exist canonical inclusion $\Delta^{can}(X_1\cap X_2)\subset \Delta^{can}(X_i)$. Then $\Delta^{can}(X)$ is obtained by the canonical glueing of $\Delta^{can}(X_i)$ along $\Delta^{can}(X_1\cap X_2)$.
\end{corollary}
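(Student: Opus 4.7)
The plan is to combine the open-inclusion functoriality from item (4) of the preceding Proposition with the affine-cover construction of $\Delta^{can}$ given immediately above the corollary. I interpret the setup as one in which $X_1, X_2$ are open in $X$ (this is the case implicit in the preceding construction by affine covers); then $X_1 \cap X_2$ is open in each $X_i$.

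First I would produce the canonical inclusions. By item (4) of the preceding Proposition, every open immersion of quasiprojective varieties induces a canonical inclusion of the associated canonical dual complexes. Applied to $X_1 \cap X_2 \hookrightarrow X_i$ this directly gives $\Delta^{can}(X_1 \cap X_2) \subset \Delta^{can}(X_i)$, and applied to $X_i \hookrightarrow X$ it gives $\Delta^{can}(X_i) \subset \Delta^{can}(X)$. Functoriality under composition (item (3) of the Proposition) guarantees that the two routes $\Delta^{can}(X_1 \cap X_2) \hookrightarrow \Delta^{can}(X_1) \hookrightarrow \Delta^{can}(X)$ and $\Delta^{can}(X_1 \cap X_2) \hookrightarrow \Delta^{can}(X_2) \hookrightarrow \Delta^{can}(X)$ coincide as inclusions of simplicial complexes, which establishes the first claim.

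For the gluing identification, I would refine any affine open cover of $X$ to one $(Y_\alpha)_{\alpha \in I}$ such that each $Y_\alpha$ is contained in $X_1$ or in $X_2$. Partition $I = I_1 \cup I_2$ accordingly (with some indices possibly in both). By the construction recalled just above the corollary, $\Delta^{can}(X)$ is obtained by gluing the $\Delta^{can}(Y_\alpha)$ along the $\Delta^{can}(Y_\alpha \cap Y_\beta)$ over all $\alpha, \beta \in I$. The sub-diagram indexed by $I_i$ produces $\Delta^{can}(X_i)$, while the sub-diagram indexed by $I_1 \cap I_2$ produces $\Delta^{can}(X_1 \cap X_2)$. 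Standard manipulation of colimits of simplicial complexes over the decomposition $I = I_1 \cup I_2$ then yields
\[
\Delta^{can}(X) \;=\; \Delta^{can}(X_1) \,\cup_{\Delta^{can}(X_1 \cap X_2)}\, \Delta^{can}(X_2),
\]
which is the canonical gluing claimed. Compatibility under change of refining cover follows because any two such refinements admit a common one which, by item (4), maps canonically and injectively into both.

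The main technical obstacle is to verify that the inclusions supplied by item (4) are genuine subcomplex inclusions of the canonical complexes rather than merely homotopy equivalences. This rests on the intrinsic description in Proposition \ref{22}: the canonical complex $\Delta^{can}$ depends only on the strict transform and its intersections with the centers of the extended Hironaka algorithm, so the restriction of the desingularization to an open subvariety picks out exactly the subset of the exceptional divisor whose simplices are supported over that open piece. Once this point is secured, the pushout statement is formal from the colimit description.
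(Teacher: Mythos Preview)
The paper gives no explicit proof of this corollary; it is stated as an immediate consequence of the preceding Proposition (especially item~(4)) together with the affine-cover gluing construction of $\Delta^{can}$ in the paragraph just before. Your argument is correct and simply spells out this implicit reasoning: open inclusions yield canonical subcomplex inclusions by item~(4), functoriality gives compatibility of the two routes through $\Delta^{can}(X_1\cap X_2)$, and the colimit over a refined affine cover factors as the pushout over $X_1$ and $X_2$. One minor comment: the ``main technical obstacle'' you flag at the end is not actually an obstacle, since item~(4) of the Proposition already asserts that $\Delta^{can}(U)\hookrightarrow\Delta^{can}(X)$ is a genuine inclusion of complexes, not merely a homotopy equivalence; so that paragraph can be dropped.
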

\begin{theorem} If $X$ is a variety with SNC then $\Sigma(X)$ is a homotopy type of the dual complex $\Delta(X)$.
\end{theorem}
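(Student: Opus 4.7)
The plan is to compare $\Delta(X)$ with $\Delta(D^Z)=\Sigma(X)$ by tracking how the dual complex of the SNC variety $X^i\cup D^i$ (strict transform of $X$ together with the running exceptional divisor) evolves through the blow-ups of the extended Hironaka desingularization $Z=Z^0\leftarrow\dots\leftarrow Z^r=\overline{Z}$ of $X\subset Z$, where $X$ is embedded in some smooth quasiprojective $Z$ of sufficiently large dimension.

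First I would verify by induction on $i$ that $X^i\cup D^i$ is an SNC variety on $Z^i$, and that each successive blow-up center $C_i$ (smooth, contained in $X^i$ by Theorem~\ref{th: emde}(2), and SNC with $D^i$) in fact has SNC with the whole union $X^i\cup D^i$. The base case $X^0=X$, $D^0=\emptyset$ is immediate; the inductive step relies on the functoriality of canonical Hironaka desingularization under smooth morphisms applied to an SNC input, which \'etale-locally forces the chosen centers to respect the SNC stratification of $X^i\cup D^i$.

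Next, Proposition~\ref{cone2} shows that each blow-up modifies $\Delta(X^i\cup D^i)$ by a cone extension at $(\Delta_{C_i},\Delta^0_{C_i})$. Because $C_i\subset X^i$, the center is either itself an intersection component of $X^i\cup D^i$ (case (4) of Proposition~\ref{cone2}) or properly contained in the smallest such intersection component (case (5)); in both cases the geometric realization of the dual complex is preserved up to homotopy equivalence. Concatenating gives
\[
|\Delta(X)|=|\Delta(X^0\cup D^0)|\simeq |\Delta(X^1\cup D^1)|\simeq \dots \simeq |\Delta(X^r\cup D^r)|.
\]
For the final extended step $Z^r\to Z^{r-1}$, the center is the smooth $\widetilde{X}=X^{r-1}$, whose irreducible components are vertices of $\Delta(X^{r-1}\cup D^{r-1})$; performing the blow-up componentwise keeps us in case (4). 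After this last blow-up, $X^r=\emptyset$, so $\Delta(X^r\cup D^r)=\Delta(D^r)=\Delta(D^Z)$, whose homotopy type is by definition $\Sigma(X)$.

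The main obstacle lies in the inductive step of the first paragraph: verifying that the canonical Hironaka algorithm applied to an SNC input produces centers compatible not only with the exceptional divisor $D^i$ (as guaranteed by Theorem~\ref{th: emde}) but also with the ambient SNC structure of $X^i$. An alternative is to bypass canonical Hironaka entirely and use a tailor-made desingularization sequence that blows up intersection strata of $X^i\cup D^i$ in order of increasing codimension, which manifestly maintains SNC and places every center in case (4); the independence of $\Sigma(X)$ from the choice of desingularization (established in the preceding discussion via Proposition~\ref{22}) then makes this substitution legitimate.
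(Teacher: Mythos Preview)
Your proposal is correct and follows essentially the same approach as the paper: both arguments track the dual complex of the strict transform together with the exceptional divisor through the blow-ups of the extended Hironaka desingularization, invoking cases (4) and (5) of Proposition~\ref{cone2} at each step to preserve the homotopy type. Your version is more detailed than the paper's one-sentence proof; in particular, you explicitly isolate the technical point (that the canonical centers must have SNC not only with $D^i$ but with $X^i\cup D^i$) and propose a workaround via a tailor-made sequence of stratum blow-ups together with the independence of $\Sigma(X)$ from the chosen desingularization, whereas the paper simply asserts that the relevant cone extensions are of the required form.
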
 
\begin{proof} The complex $\Delta^{can}(X)$ is obtained from $\Delta(X)$ by the sequence of the canonical cone extensions of the form (4) and (5) from Proposition \ref{cone2} corresponding to the blow-ups in the extended Hironaka desingularization. These operations do not change the homotopy type.
\end{proof}

\begin{proposition}\label{impo3} Let $X\subset Z$ be any  closed embedding of $X$ as a closed subscheme of
a smooth variety $Z$. Let ${Z}_0\to Z$ be any proper birational map from smooth variety ${Z}_0$, such that the proper  transform of $X$ is  a variety $D^{Z_0}_X$ with SNC  on
${Z}_0$. Then the combinatorial type $\Sigma(X)$ is the homotopy type the dual complex $\Delta(D^{Z_0}_X)$.
\end{proposition}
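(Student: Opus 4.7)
The strategy is to interpolate between the extended Hironaka desingularization $\bar Z\to Z$ (which by construction gives $\Sigma(X)\simeq|\Delta(D)|$, where $D$ is the associated SNC divisor on $\bar Z$) and the given resolution $Z_0\to Z$ via a common smooth model. First I would apply canonical Hironaka principalization to the ideal of indeterminacy of the rational map $\bar Z\dashrightarrow Z_0$, performed compatibly with the SNC divisor $D$ on $\bar Z$ and the SNC variety $D^{Z_0}_X$ on $Z_0$. This produces a smooth variety $W$ together with two proper birational morphisms $\pi_1\colon W\to\bar Z$ and $\pi_2\colon W\to Z_0$, each realized as a sequence of blow-ups of smooth centers having SNC with the running SNC structure on its respective side.

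Next I would apply Proposition \ref{cone2} iteratively along each leg. Along $\pi_1$, every blow-up is at a smooth center having SNC with the current exceptional divisor and contained in some intersection component (arranged by the compatibility of the principalization with the divisor structure). Cases (4) and (5) of Proposition \ref{cone2} then show that each corresponding cone extension of the dual complex is a homotopy equivalence. Telescoping yields $|\Delta(D^W)|\simeq|\Delta(D)|$, where $D^W$ is the SNC divisor on $W$ produced along this leg. The symmetric argument along $\pi_2$ yields $|\Delta(\widetilde D^W)|\simeq|\Delta(D^{Z_0}_X)|$, where $\widetilde D^W$ is the SNC divisor obtained from the $Z_0$ side.

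The main obstacle is identifying $D^W=\widetilde D^W$ as the same SNC divisor on $W$, so that the two homotopy equivalences can be concatenated. The key point is that both should be the canonical SNC divisor on $W$ determined intrinsically by the morphism $W\to Z$ and the subscheme $X\subset Z$, namely the reduced total transform of $X$ in $W$; this is guaranteed by the canonicity of the Hironaka algorithm together with the independence-of-embedding statement of Proposition \ref{22}. If necessary, one can further refine $W$ by additional compatible blow-ups (which by the same cone-extension argument preserve the homotopy types on both sides) until the identification is forced. Combining the two equivalences then gives $\Sigma(X)=|\Delta(D)|\simeq|\Delta(D^W)|=|\Delta(\widetilde D^W)|\simeq|\Delta(D^{Z_0}_X)|$, which is the claim.
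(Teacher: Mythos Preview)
Your argument has two genuine gaps, and both occur precisely at the steps you flag as ``the main obstacle''.

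First, the assertion that along $\pi_1$ (and likewise $\pi_2$) ``every blow-up is at a smooth center \ldots\ contained in some intersection component'' is not justified. Canonical principalization of the indeterminacy ideal on $(\bar Z,D)$ guarantees only that each center has SNC with $D$, not that it lies inside $D$. The indeterminacy locus of $\bar Z\dashrightarrow Z_0$ need not sit over $X$ at all (nothing in the hypothesis forces $Z_0\to Z$ to be an isomorphism away from $X$), so the centers produced by principalization may well be disjoint from, or only partially meet, the current SNC structure. In that situation you are outside cases~(4) and~(5) of Proposition~\ref{cone2}; you get a pure cone extension at a subcomplex $\Delta_C$ that is not a closed star, and such cone extensions do change the homotopy type in general (compare Proposition~\ref{cone3}(3) and the example $(\emptyset,\emptyset)\cdot\Sigma=\Sigma\sqcup\{v\}$). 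So the telescoping homotopy equivalence $|\Delta(D^W)|\simeq|\Delta(D)|$ is not established.

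Second, even granting a common roof $W$, the identification $D^W=\widetilde D^W$ fails for the reason that $D^{Z_0}_X$ is by hypothesis the \emph{proper} transform of $X$ on $Z_0$, not the total transform. Exceptional divisors of $Z_0\to Z$ lying over $X$ are not part of $D^{Z_0}_X$, so the reduced total transform of $X$ in $W$ via $W\to Z_0\to Z$ generally strictly contains the pullback of $D^{Z_0}_X$. Thus the two SNC loci on $W$ are not the same object, and Proposition~\ref{22} (which concerns the extended canonical desingularization, not an arbitrary $Z_0$) does not force the identification.

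The paper's proof avoids both difficulties by a different route: it enlarges the ambient variety to $T=Z\times\bP^n$ with $n\gg 0$, uses Lemma~\ref{impo} to show $\Delta(D^Z_X)$ is a deformation retract of the canonical $\Delta(D^T_X)=\Delta^{\can}(X)$, and then invokes Theorem~\ref{intro} (from \cite{abw}, a consequence of Weak Factorization) to compare $\Delta(D^{Z_0}_X)$ with $\Delta(D^Z_X)$. The comparison of two arbitrary SNC models really does require Weak Factorization; a single common roof is not enough, which is exactly where your argument breaks.
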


\begin{proof}  Consider a further closed embedding $Z\subset T=:Z\times \bP^n$ into a closed embedding  of sufficiently large dimesion $n$.
Then the extended canonical embedded desingularization $\overline{Z}\to Z$ of $X\subset Z$ defines an extended canonical embedded desingularization $\overline{T}\to T$ of $X\subset T$. It transforms $X$ into $D^{Z}_{X}$ on $\overline{Z}$ and $D^T_{X}$ on $\overline{T}$.
Moreover the dual complex $\Delta(D^T_{X})=\Delta^{can}(X)$ on $\overline{T}$ is independent of the embedding by Proposition \ref{22}.
On the other hand, by Lemma \ref{impo}, the dual complex $\Delta(D^Z_{X})$ is a deformation retract of $\Delta(D_{X}^{T})$.
To finish the proof we need  the following theorem 
\begin{theorem}(\cite{abw}, see also \cite{step},\cite{payne})
\label{intro}
Suppose that  $X$ is a smooth  variety, and
$D\subset X$ be its closed subcsheme  which is a  variety with SNC. Then the homotopy type of the
dual complex of $D$  depends only the  the
complement $X-D$, and in fact only on its proper birational class.
\end{theorem}
By Theorem \ref{intro}, $\Delta(D^{Z_0}_{X})$ is homotopy equivalent to $\Delta(D^Z_{X})$, and thus to $\Delta^{can}(X)=\Delta(D^T_{X})$. 
\end{proof}



{\bf Step 2.} {\bf Maps of complexes associated with closed embeddings.}

Let $i: Y\subset X$ be closed embedding of quasiprojective varieties. Consider the embeddings $Y\subset X\subset Z$ into smooth variety $Z$ of a sufficiently large dimension  and the canonical extended designularization $Z_1\to Z$  $Y\subset Z$. It produces  a divisor $D^{Z_1}_Y$ on ${Z}_1$. Next apply the canonical extended desingularization $\overline{Z}\to Z_1$ of the proper transform $X_1\subset Z_1$ with SNC to $D^{Z_1}_Y$.

 This defines  a pair of SNC divisors $D^Z_Y\subset D^Z_X$ and the inclusion $\Delta(D^Z_Y)\hookrightarrow \Delta(D^Z_X)$.
 As in Theorem \ref{22} the process is canonical and produces
a unique pair of complexes $$\Delta^{can}(Y; (Y,X)):=D^Z_Y \subset \Delta^{can}(X; (Y,X)):=\Delta(D^Z_X).$$

Observe that, by Proposition \ref{impo3}, $\Sigma(Y)$ and $\Sigma(X)$ are the homotopy type of $\Delta^{can}(Y; (Y,X))$ and  $\Delta^{can}(X; (Y,X))$. We define the map $\Sigma(i):\Sigma(X)\to \Sigma(Y)$ 
of the combinatorial types associated with the embedding $i$ to be  the homotopy type of the inclusion $$\Delta^{can}(Y; (Y,X))\subset\Delta^{can}(X; (Y,X))$$


The construction can be 
extended to a sequence closed embeddings $X_1 \buildrel{i_1}\over\hookrightarrow X_2\buildrel{i_2}\over\hookrightarrow  \ldots \buildrel{i_{k-1}}\over\hookrightarrow  X_k\subset Z$  where $Z$ is an ambient smooth field of sufficiently large dimension. By
applying   
 the  canonical embedded desingularization  of $X_1$ followed 
by the canonical embedded desingularization of $X_2$ and etc, up $X_k$ 
we produce the sequence of SNC divisors $D^Z_{X_1}\subset \ldots\subset D^Z_{X_k}$. As before ( Theorem \ref{22}) the process is canonical and produces and produces
a unique sequence  of complexes $$\Delta^{can}(X_1; (X_1,\ldots,X_k)):=\Delta(D^Z_{X_1}) \subset \ldots\subset \Delta^{can}(X_k; (X_1,\ldots,X_k)):=\Delta(D^Z_{X_k})\eqno (*)$$

Note that the by Proposition \ref{impo3}, the homotopy type of $\Delta^{can}(X_k; (X_1,\ldots,X_k))$ is equal to $\Sigma(X_i)$. Recall the following result (A simple consequence of the Weak factorization Theorem):
\begin{lemma} (\cite{payne},\cite{abw})\label{ppp} Let $X_1\subset \ldots\subset X_k\subset Z$  be embeddings of  $X_i$ into smooth $Z$, then for any proper birational transformation $Z'$ of $Z$ into SNC divisors (or varieties with SNC) $X'_1\subset \ldots\subset X'_k\subset Z'$ on the smooth $Z'$,
the induced embeddings of the dual complexes $\Delta(X'_1)\subset \ldots\subset \Delta(X'_k)$ are homotopy equivalent.
\end{lemma}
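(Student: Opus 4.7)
The plan is to reduce the statement to the case of a single smooth blow-up via the Weak Factorization Theorem, and then to invoke the filtered cone-extension analysis already developed in Proposition \ref{cone2} and Corollary \ref{impo2}.

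First, given two proper birational models $(Z',X'_\bullet)$ and $(Z'',X''_\bullet)$ of $(Z,X_\bullet)$ that both produce SNC filtrations, I would construct a common smooth domination. Resolve the closure of the graph of $Z'\dashrightarrow Z''$ inside $Z'\times_Z Z''$, and then apply the iterated embedded desingularization of Section~1 across all $2k$ members of the combined filtration to obtain a smooth $W$ on which the total transforms of $X'_\bullet$ and of $X''_\bullet$ are simultaneously SNC. It then suffices to show the following: if $W\to Z'$ is a proper birational morphism of smooth varieties carrying the SNC filtration $X'_\bullet$ to an SNC filtration $W_\bullet$, then the induced inclusions of dual complexes are homotopy equivalent in a filtered sense. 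By the strong form of the Weak Factorization Theorem, such $W\to Z'$ factors into blow-ups and blow-downs along smooth centers, each having SNC with every member of the current SNC filtration, so it suffices to handle a single such blow-up.

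Second, for a single blow-up $\sigma:\widetilde{Z'}\to Z'$ at a smooth center $C$ with SNC to every $X'_i$, Proposition \ref{cone2} identifies $\Delta(\widetilde{X'_i})$ individually as the cone extension of $\Delta(X'_i)$ at $(\Delta_{X'_i,C},\Delta^0_{X'_i,C})$, and provides a homotopy equivalence $|\Delta(\widetilde{X'_i})|\simeq |\Delta(X'_i)|$ in each of its cases (4) and (5). The compatibility of these equivalences across the filtration is exactly the type of statement proved in Corollary \ref{impo2}: one checks the set-theoretic relations $\Delta_{X'_i,C}\subseteq \Delta_{X'_{i+1},C}$ and $\Delta^0_{X'_i,C}\cap\Delta(X'_{i+1})\supseteq \Delta^0_{X'_{i+1},C}$, together with the star-closedness of $\Delta^0_{X'_i,C}$ inside $\Delta(X'_j)$ for $j\ge i$, and then iteratively collapses maximal simplices in the differences $\Delta^0_{X'_i,C}\setminus\Delta^0_{X'_{i+1},C}$ toward the new vertex $v_C$. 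This yields a deformation retraction at each level whose restriction to the lower levels coincides with the retraction built there.

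The main obstacle is exactly this filtered bookkeeping: one must verify simultaneously for all pairs $i\le j$ that $\Delta^0_{X'_i,C}$ is star-closed in $\Delta(X'_j)$ and that the partial collapses introduced to retract level $j$ restrict to identity on the realizations of the lower levels. This is the content of the $k=2$ analysis in Corollary \ref{impo2}; iterating its argument level by level, and carefully ordering the collapses by decreasing filtration index, is routine once the star-closedness is established (which reduces to the local model of SNC varieties, where membership in $\Delta^0_{X'_i,C}$ records a local inclusion of strata into $C$ that passes to larger strata). Once the filtered homotopy equivalence is in place for a single blow-up, composing across the sequence of blow-ups supplied by the Weak Factorization Theorem yields the required equivalence of filtered dual complexes.
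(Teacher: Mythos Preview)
The paper does not supply its own proof of this lemma: it is quoted from \cite{payne} and \cite{abw} with the remark that it is ``a simple consequence of the Weak factorization Theorem.'' Your overall strategy---reduce to a single smooth blow-up via Weak Factorization and then analyze the effect on the filtered dual complexes via cone extensions---is exactly the route those references take, and is what the paper is pointing to.

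That said, your appeal to Corollary~\ref{impo2} is not quite right, and one of your displayed relations is backwards. Corollary~\ref{impo2} concerns the comparison between the filtration on $\overline{Z}$ and the induced filtration on $\overline{T}$ for a smooth embedding $Z\subset T$, with centers sitting inside $Z$; there the crucial inclusion $\Delta^0_{Z,C}\supseteq\Delta^0_{T,C}$ holds because $D^Z_\tau=D^T_\tau\cap\overline{Z}$ is \emph{smaller} than $D^T_\tau$. In the situation of Lemma~\ref{ppp} you instead have a nested chain $X'_1\subset\cdots\subset X'_k$ of SNC divisors on a \emph{single} ambient variety, and for $\tau\in\Delta(X'_i)\subset\Delta(X'_{i+1})$ the stratum $D_\tau$ is the same whether computed in $X'_i$ or $X'_{i+1}$. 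Hence $\Delta^0_{X'_i,C}=\Delta^0_{X'_{i+1},C}\cap\Delta(X'_i)\subseteq\Delta^0_{X'_{i+1},C}$, the opposite of what you wrote; with this direction the set $\Delta^0_{X'_i,C}\setminus\Delta^0_{X'_{i+1},C}$ you propose to collapse is empty, so the argument as written does not go through.

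The repair is the one actually carried out in \cite{abw}: for SNC divisors one is in the setting of Proposition~\ref{cone3}, not the general Proposition~\ref{cone2}, so each $\Delta^0_{X'_i,C}$ is either empty or equal to $\Star(\sigma_C^{(i)},\Delta(X'_i))$, and the resulting transformation of $\Delta(X'_i)$ is either a star subdivision or a pure cone extension over a contractible $\Delta_{X'_i,C}\subset\overline{\Star(\sigma_C^{(i)},\Delta(X'_i))}$. The filtered compatibility then comes from the fact that $\sigma_C^{(i)}$ is a face of $\sigma_C^{(j)}$ for $i\le j$ and the retractions (moving the new vertex $v_C$ back to the barycenter) are defined by the same formula at every level. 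So your plan is sound, but the bookkeeping must be redone in this SNC-divisor setting rather than transplanted from Corollary~\ref{impo2}.
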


By the the lemma, the sequence (*) defines the sequence of the induced maps of the cominatorial types
$$\Sigma(X_1)\buildrel{\Sigma(i_1)}\over\hookrightarrow\Sigma(X_2)\buildrel{\Sigma(i_2)}\over\hookrightarrow \ldots \buildrel{\Sigma(i_{k-1})}\over \hookrightarrow \Sigma(X_k)\eqno(**)$$

This observation can be further extended
\begin{lemma}\label{impo5}
Let $X_1 \buildrel{i_1}\over\hookrightarrow X_2\buildrel{i_2}\over\hookrightarrow  \ldots \buildrel{i_{k-1}}\over\hookrightarrow  X_k\subset Z$  be embeddings of  $X_i$ into smooth $Z$, then for any proper birational transformation of $Z$ taking $X_i$ into SNC divisors (or varieties with SNC) $X'_1\subset \ldots\subset X'_k\subset Z'$ on the smooth $Z'$,
the induced embeddings of the dual complexes $$\Delta(X'_1)\subset \ldots\subset \Delta(X'_k)\eqno{(***)}$$ define the induced sequence of combinatorial types $$\Sigma(X_1)\buildrel{\Sigma(i_1)}\over\hookrightarrow\Sigma(X_2)\buildrel{\Sigma(i_2)}\over\hookrightarrow \ldots \buildrel{\Sigma(i_{k-1})}\over \hookrightarrow \Sigma(X_k)$$
\end{lemma}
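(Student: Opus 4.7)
The plan is to extend the strategy behind Lemma \ref{ppp} to the filtered setting, using Corollary \ref{impo2} in place of the unfiltered retraction. Let $\overline{Z}\to Z$ denote the canonical model from successive extended Hironaka desingularizations of $X_1,\ldots,X_k$, producing the sequence (*) whose dual complexes realize (**). The given modification $Z'\to Z$ produces the sequence (***). I need to show these two filtered chains of dual complexes are homotopy equivalent as diagrams of inclusions.

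First, I would dominate both $\overline{Z}$ and $Z'$ by a common smooth projective model $W\to Z$. Concretely, resolve the closure of the graph of $\overline{Z}\dashrightarrow Z'$ and then principalize the ideal of the total preimage of $D^{\overline{Z}}_{X_k}\cup X'_k$ using Theorem \ref{th: 1}. The result is a smooth $W$ carrying SNC filtrations $D^W_1\subset\cdots\subset D^W_k$, with $D^W_i$ the proper transform of $X_i$, such that both $W\to\overline{Z}$ and $W\to Z'$ are projective birational morphisms of smooth varieties whose exceptional loci have SNC with the entire chain.

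Next, apply the weak factorization theorem to each of $W\to\overline{Z}$ and $W\to Z'$. Each factorization step is a blowup or blowdown along a smooth center having SNC with the full boundary divisor; since every piece $D_i$ of the filtration is a subvariety of the top SNC component, a center having SNC with the top component automatically has SNC with each $D_i$ simultaneously. At each such step Corollary \ref{impo2} applies: the resulting filtered chain of dual complexes is deformation-retracted onto the previous one by a homotopy that respects the filtration (i.e. carries $\Delta(D'_i)$ into $\Delta(D_i)$ for every $i$). Composing these retractions along the factorization yields a chain-level deformation retraction from $\Delta(D^W_1)\subset\cdots\subset\Delta(D^W_k)$ onto the filtered canonical chain $\Delta(D^{\overline{Z}}_{X_1})\subset\cdots\subset\Delta(D^{\overline{Z}}_{X_k})$, and likewise onto $\Delta(X'_1)\subset\cdots\subset\Delta(X'_k)$. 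By transitivity the given chain realizes the canonical combinatorial type sequence $\Sigma(X_1)\hookrightarrow\cdots\hookrightarrow\Sigma(X_k)$, proving the lemma.

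The main obstacle is the filtered compatibility of weak factorization: one must ensure every center has SNC with every $D_i$ in the chain, not just with the ambient boundary, and that the retractions produced step-by-step compose coherently across the sequence. Both requirements are genuinely subtle in general, but are taken care of here by two facts: the totally ordered inclusion structure of the filtration (which reduces SNC with all $D_i$ to SNC with the top divisor), and Corollary \ref{impo2} (which explicitly packages the homotopical compatibility of a single blowup with the filtered chain). Once these are in place, the argument is essentially a filtered transcription of the proof of Lemma \ref{ppp}.
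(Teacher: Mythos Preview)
Your strategy diverges from the paper's and rests on a misreading of Corollary \ref{impo2}. That corollary is not a statement about how a single blow-up on one smooth variety transforms a filtered chain of dual complexes; it concerns an embedding $Z\subset T$ and asserts that the filtration-compatible retraction $|\Delta^T|\to|\Delta^Z|$ survives a further blow-up of a center in $\overline{Z}$. What you actually need at each weak-factorization step is the filtered analogue of Proposition~\ref{cone2}(4)(5) (or Proposition~\ref{cone3}): a blow-up along a center having SNC with the whole boundary induces, level by level, cone extensions that are simultaneous homotopy equivalences. That is exactly the content packaged in Lemma~\ref{ppp} (from \cite{payne},\cite{abw}); so instead of redoing the weak-factorization argument you could simply invoke Lemma~\ref{ppp} to identify $(***)$ with the filtered chain coming from your $\overline{Z}$.

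The more substantive gap is your opening claim that the canonical process on the given $Z$ already ``produces the sequence $(*)$ whose dual complexes realize $(**)$.'' The canonical complexes $\Delta^{\can}(X_i;(X_1,\ldots,X_k))$ are only literally obtained when the ambient variety has sufficiently large dimension (Proposition~\ref{22}); nothing in the hypothesis of the lemma guarantees this for $Z$. The paper closes this gap by further embedding $Z\hookrightarrow T=Z\times\bP^n$ with $n$ large, running the canonical process on $T$ to obtain the genuine sequence $(*)$ realizing $(**)$, and then invoking Corollary~\ref{impo2} in its intended role: the filtered chain on $\overline{T}$ deformation-retracts, compatibly with the filtration, onto the induced filtered chain on $Z$. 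Only after that does Lemma~\ref{ppp} connect the $Z$-level chain to $(***)$. In short, you have transposed the role of Corollary~\ref{impo2}: you cite it where Lemma~\ref{ppp} (or the cone-extension propositions) is what is needed, and you omit it precisely at the step---passing from the canonical model on $T$ down to $Z$---where the paper genuinely requires it.
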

\begin{proof} The embeddding of $X_i$ into $Z$ can be further  
extended by embedding $Z\to T\times \bP^n$ into a smooth ambient variety $T$ of sufficiently large dimension. Applying 
the extended Hironaka desingularization to $X_i$ on $T$ gives 
a sequence of the dual complexes $$\Delta^{can}(X_1; (X_1,\ldots,X_k)) \subset \ldots\subset \Delta^{can}(X_k; (X_1,\ldots,X_k))$$
which is homotopy equivalent to the sequence (**). Now by Corollary \ref{impo2} the sequnece is homotopy equivalent to the sequence of the embedding of the induced dual complexes on the subvariety $Z$. The latter, in view of Lemma \ref{ppp}, is homotopy equivalent to the sequence (***).
\end{proof}



\bigskip 
{\bf Step 3}. {\bf Maps associated with projections}

Let $X$ and $Y$ be quasiprojective varieties and $X\subset T$ and $Y\subset V$ be  closed embeddings into a smooth ambient varieties of sufficiently large dimension.  
Consider the extended Hironaka desingularization of  $X\subset T$ and $Y\subset V$ producing SNC divisors $D_X$, and $D_Y$. This defines  a variety with SNC $D_X\times D_Y$ corresponding to  the dual complex $\Delta(D_X)\otimes \Delta(D_Y)$ whose homotopy type is equivalent to $|\Delta(D_X)|\times|\Delta(D_Y)|$ and thus of $\Sigma(X)\times \Sigma(Y)$. In the case of arbitrary schemes of finite types
we consider affine covers $(X_i)$ of $X$, and $Y_i$ of $Y$.
Then $\Delta^{can}(X\times Y)$ is obtained by the canonical glueing of $\Delta^{can}(X_i\times Y_i):=\Delta^{can}(X_i) \otimes \Delta^{can}(Y_i)$. This allows to construct
$$\Delta^{can}(X\times Y)=\Delta^{can}(X_i) \otimes \Delta^{can}(Y_i),$$
 and, in view of  Lemma \ref{product},  leads to a homotopy equivalence: 
  $$\Sigma(X\times Y)\simeq \Sigma(X)\times \Sigma(Y).$$
In the particular case, when $Y$ is smooth we get the isomorphism $\Sigma(X\times Y)=\Sigma(X)\times \Sigma(Y)\simeq \Sigma(X)$. We shall describe this isomorphism in a functorial way.
 
\begin{lemma}\label{map} Let $D$ be an SNC divisor on a smooth variety $Z$ and  let $E$ be its closed subscheme which is avriety with  SNC, and let $i:E\subset D$ be the closed embedding with the induced map of combinatorial types $\Sigma(i): \Sigma(E)\to \Sigma(D)$ (introduced  in the previous Step). 
 
 Assume that each maximal irreducible components $E_i$ of $E$ is contained in a unique maximal component $D_i$ of $D$. Then any component $E_\alpha^{s}$ is contained in the relevant component $D_\alpha^{s'}$ for a unique $s'$. This induces a map on vertices of the dual complex $\Delta(E)$ which  extends uniquely to the map 
$ \Delta(E)\to  \Delta(D)$ and its geometric realization    $\alpha: |\Delta(E)|\to  |\Delta(D)|$ which represents the map between combinatotial types of $\Sigma(E)$, and $\Sigma(D)$. Then  the map $\alpha$ is homotopy equivalent to $\Sigma(i)$.

\end{lemma}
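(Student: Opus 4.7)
The plan is to apply the Step~2 construction of $\Sigma(i)$ to the pair $E\subset D\subset Z$ and then build, step by step from the resulting blow-up sequence, compatible homotopy retractions from the two canonical dual complexes down to $\Delta(E)$ and $\Delta(D)$ whose induced quotient is exactly $\alpha$.

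First I run the iterated canonical extended Hironaka desingularization of the preceding Step~2: resolve $E$ first to obtain a modification $Z_1\to Z$ with SNC divisor $D^{Z_1}_E$, then resolve the proper transform of $D$ (with SNC to $D^{Z_1}_E$) to obtain $\overline{Z}\to Z_1$ carrying SNC divisors $D^Z_E\subset D^Z_D$. By the construction in Step~2, the subcomplex inclusion $\Delta(D^Z_E)\hookrightarrow\Delta(D^Z_D)$ represents $\Sigma(i)$, and by Proposition~\ref{22} this is independent of the choice of ambient. The geometric input from the lemma's hypothesis is that every center used in the $E$-resolution lies in the strict transform of $E$ and hence, by the component hypothesis, in the strict transform of a unique maximal component $D_{j(i)}$ of $D$; every center used subsequently for $D$ lies in the strict transform of $D$.

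Next I construct retractions $\rho_E : |\Delta(D^Z_E)|\to|\Delta(E)|$ and $\rho_D : |\Delta(D^Z_D)|\to|\Delta(D)|$ by reversing the blow-up sequences one step at a time. Each blow-up induces a cone extension falling into case~(4) or~(5) of Proposition~\ref{cone2}, and in both cases the new complex retracts homotopically onto the previous one. By an adaptation of Corollary~\ref{impo2}, in which the smooth inclusion $Z\subset T$ is replaced by the SNC inclusion $E\subset D$ with the component hypothesis, these retractions can be chosen compatibly so that $\rho_D$ restricts to $\rho_E$ on the subcomplex $|\Delta(D^Z_E)|\subset|\Delta(D^Z_D)|$. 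The inductive argument is that at each blow-up with center $C\subset E$, the subsets $\Delta_{E,C}$ and $\Delta^0_{E,C}$ of the $E$-dual complex embed into $\Delta_{D,C}$ and $\Delta^0_{D,C}$ on the $D$-side in the same pattern exploited in the proof of Corollary~\ref{impo2}, so the same collapsing retractions descend to both sides in parallel.

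Finally I verify that in the homotopy-commutative square
\[
\begin{array}{ccc}
|\Delta(D^Z_E)| & \hookrightarrow & |\Delta(D^Z_D)| \\
\rho_E \downarrow & & \downarrow \rho_D \\
|\Delta(E)| & \xrightarrow{\alpha} & |\Delta(D)|
\end{array}
\]
the bottom arrow is indeed $\alpha$: on vertices this is precisely the statement that $\rho_D$ followed by the inclusion sends the vertex for $E_i$ to the vertex for the unique $D_{j(i)}$ containing it, which is the defining property of $\alpha$, and the extension to higher simplices is forced by vertex-level compatibility together with Proposition~\ref{cone2}(3). Since the top arrow represents $\Sigma(i)$ and both vertical arrows are homotopy equivalences, we conclude $\Sigma(i)\simeq\alpha$. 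The main obstacle is establishing the SNC-to-SNC analogue of Corollary~\ref{impo2} used in the second paragraph, which is the technical heart of the proof; once the compatibility of retractions is secured, the rest of the verification is routine.
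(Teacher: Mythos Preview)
Your approach is different from the paper's and carries a real gap at precisely the point you flag.

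The paper does \emph{not} run the full Step~2 machinery and then retract back. Instead it performs a short, explicit sequence of blow-ups: blow up, one at a time, each maximal component $E_i$ of $E$ that is not already divisorial in $Z$. For each such blow-up the paper computes the effect on both sides directly from Proposition~\ref{cone2}. On the $E$-side the cone extension at $(\overline{\Star(v_i)},\Delta^0_{E_i})$ replaces $v_i$ by a new vertex $v$, yielding a complex homotopy equivalent to $\Delta(E)$; on the $D$-side, since $E_i$ is not a component of the divisor $D$, one gets a \emph{pure} cone extension at $\Delta_{E_i,\Sigma(D)}\subset\overline{\Star(\alpha(v_i),\Sigma(D))}$. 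The new map $\alpha'$ is then seen to be homotopic to $\alpha$ simply by sliding $v$ to $\alpha(v_i)$. After all non-divisorial $E_i$ are blown up, $\overline{E}\subset\overline{D}$ are both SNC \emph{divisors}, so the inclusion $\Delta(\overline{E})\hookrightarrow\Delta(\overline{D})$ represents $\Sigma(i)$ by Lemma~\ref{impo5}, and it has just been shown homotopic to $\alpha$.

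Your route requires the ``SNC-to-SNC analogue of Corollary~\ref{impo2}'', and this is not a routine adaptation. In Lemma~\ref{impo} and Corollary~\ref{impo2} the ambient pair $Z\subset T$ is smooth with \emph{no} pre-existing divisor structure; the dual complexes arise solely from exceptional divisors, and the key identity $\Delta_{Z,C}=\Delta_{T,C}$ holds because every component meeting $C$ meets it through $Z$. In your situation $E\subset D$ already carry dual complexes $\Delta(E)$, $\Delta(D)$, and the Hironaka centers (which are not intersection components in general) interact with these in a way that does not reduce to the $\Delta_{Z,C}=\Delta_{T,C}$ mechanism. In particular, constructing $\rho_D$ compatibly with $\rho_E$ through the \emph{second} phase of blow-ups (resolving the proper transform of $D$), where the centers are no longer in $E$, is exactly where the argument would have to be made and you have not made it. The paper's approach avoids this entirely by never invoking the canonical resolution: the only blow-ups used are at maximal components of $E$, for which the cone-extension bookkeeping on both sides is transparent.
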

\begin{proof} By definition one  can identify  $\alpha(\Sigma(E))$  with a subcomplex of $\Sigma(D)$.
Consider the blow-up of the maximal component $E_i$ which is not divisorial.  It defines a cone extension  $\Sigma(E')$ of $\Sigma(E)$ at $(\Delta_{E_i,\Sigma(E)},\Delta^0_{E_i,\Sigma(E)})$
with $\Delta_{E_i,\Sigma(E)}=\overline{\Star(v_i,\Sigma(E))}$
$\Delta^0_{E_i,\Sigma(E)}\supseteq {\Star(v_i,\Sigma(E))}$
(see Lemma \ref{cone2}).

It replaces  $v_i$ and all the simplices in $\Delta^0_{E_i,\Sigma(E)}=\Star(v_i,\Sigma(E))$ with a  new vertex $v$ and the  new simplices in $v*(\Delta_{E_i,\Sigma(E)}\setminus \Delta^0_{E_i,\Sigma(E)})$. We obtain the complex $\Sigma(E')$ which is can be identified with the subcomplex of $\Sigma(E)$ by putting $v\to v_i$, and which is homotopy equivalent to the latter one (see Lemma \ref{cone2}).

On the other hand, the center $E_i$ defines the set
$\Delta_{E_i,\Sigma(D)}$ which is contained in $\overline{\Star(\alpha(v_i),\Sigma(D))}$. Moreover, by the definition, $\alpha(\Delta_{E_i,\Sigma(E)})\subset \Delta_{E_i,\Sigma(D)}$.  
 The new complex $\Sigma(D')$ is obtained from $\Sigma(D)$ by the cone extension at $(\Delta_{C,\Sigma(D)},\emptyset)$. The vertex $v$ corresponds to the exceptional divisor $\overline{E}_i$. The inclusion $E'\subset D'$ 
 defines a new map $\alpha': |\Sigma(E')|\to  |\Sigma(D')|$ can be described by glueing two maps
 $\alpha_0': \Sigma(E)\setminus \Delta^0_{E_i,\Sigma(E)}\to \Sigma(D)$
  and $\alpha'_1: v*(\Delta_{E_i,\Sigma(E)}\setminus \Delta^0_{E_i,\Sigma(E)})\to v*\Delta_{C,\Sigma(D)}$.
 
 The map is homotopy $\alpha'$ equivalent to  $\alpha$, as we can move $v$ to $\alpha(v_i)$. By the blowing of all the nondivisorial maximal component we create a closed embedding of SNC divisors $\overline{E}\subset \overline{D}$. Then the induced map $|\Sigma(\overline{E})|\to  |\Sigma(\overline{D})|$ is homotopy equivalent to $\alpha$ and $\Sigma(i)$.

\end{proof}

\begin{lemma}\label{map2}
Let $Z$ be a smooth variety with a fixed point $z\in Z$. Consider the  closed embedding   $j_z: X\simeq X\times \{z\}\hookrightarrow X\times Z$. Then the induced map $\Sigma(j_z): \Sigma(X)\to  \Sigma(X\times Z)$ is an isomorphism. 
\end{lemma}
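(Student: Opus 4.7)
My plan is to combine the product formula $|\Sigma(X\times Z)|\simeq|\Sigma(X)|\times|\Sigma(Z)|$ and the description of $\Sigma(\pi_X)$ as a projection (both established earlier in Step 3), together with the observation that $\Sigma(Z)$ is contractible for smooth $Z$ and the tautological identity $\pi_X\circ j_z=\mathrm{id}_X$, to force $\Sigma(j_z)$ to be a homotopy equivalence.

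First I would verify that $\Sigma(Z)$ is contractible for any smooth variety $Z$. Embed $Z$ into a smooth ambient variety $W$ of sufficiently large dimension and apply the extended Hironaka desingularization of $Z\subset W$. Since $Z$ is already smooth, the algorithm reduces to the single (``extended'') blow-up of $Z$ itself, producing an SNC divisor with one exceptional component. Consequently $\Delta(D_Z)$ is a single vertex and $\Sigma(Z)$ is contractible.

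With this in hand, the induced map $\Sigma(\pi_X):|\Sigma(X\times Z)|\to|\Sigma(X)|$ is homotopy equivalent to the projection $|\Sigma(X)|\times|\Sigma(Z)|\to|\Sigma(X)|$ onto a contractible factor, and so is itself a homotopy equivalence. Applying the functoriality of $\Sigma$ (up to homotopy) to the identity $\pi_X\circ j_z=\mathrm{id}_X$ yields $\Sigma(\pi_X)\circ\Sigma(j_z)\simeq\mathrm{id}_{|\Sigma(X)|}$, so $\Sigma(j_z)$ is a homotopy inverse to $\Sigma(\pi_X)$ and hence a homotopy equivalence.

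The main obstacle is verifying $\Sigma(Z)\simeq\mathrm{pt}$ for smooth $Z$; the remainder is formal. A backup direct route avoiding this observation is to apply Lemmas \ref{impo5} and \ref{map} to the chain $X\times\{z\}\subset X\times Z\subset T\times Z$ (for $X$ embedded in a smooth $T$) and the base-changed desingularization $\overline{T}\times Z\to T\times Z$: working \'etale-locally in coordinates $(u_1,\dots,u_n,y_1,\dots,y_m)$ with $D_X$ cut out by $u_1\cdots u_k=0$ and $z$ by $y_1=\dots=y_m=0$, one checks that the total transforms $D_X\times\{z\}\subset D_X\times Z$ are varieties with SNC in the smooth ambient $\overline{T}\times Z$, each maximal component $D_{X,i}\times\{z\}$ lies in the unique maximal component $D_{X,i}\times Z$, and Lemma \ref{map} then identifies the induced inclusion of dual complexes with the identity on $\Delta(D_X)$ under the natural identifications $\Delta(D_X\times\{z\})\cong\Delta(D_X)\cong\Delta(D_X\times Z)$.
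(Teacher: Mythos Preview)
Your main approach is circular within the paper's logical structure. At the point where Lemma~\ref{map2} is proved, the map $\Sigma(\pi_X)$ for a genuine projection $\pi_X:X\times Z\to X$ has not yet been defined: maps for general morphisms are only introduced in Step~4, and the very definition there passes through the identification $\Sigma(Z_X\times Y)\simeq\Sigma(Y)$ via $\Sigma(j)^{-1}$, which \emph{is} Lemma~\ref{map2}. Likewise, the statement that $\Sigma(\pi_X)$ is homotopic to the projection is Lemma~\ref{impo7}, whose proof explicitly invokes Lemma~\ref{map2}, and the compositionality $\Sigma(\pi_X)\circ\Sigma(j_z)\simeq\Sigma(\pi_X\circ j_z)$ is the final lemma of Step~4. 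So your three ingredients---the description of $\Sigma(\pi_X)$ as a projection, and functoriality for the composite $\pi_X\circ j_z$---all sit logically downstream of the result you are trying to prove.

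Your backup route, by contrast, is exactly the paper's argument, and is the intended proof. One embeds $X$ in a smooth $T_X$, runs the canonical principalization to obtain the SNC divisor $D_X$ on $\overline{T}_X$, and observes that $D_X\simeq D_X\times\{z\}\hookrightarrow D_X\times Z$ inside the smooth $\overline{T}_X\times Z$ satisfies the hypothesis of Lemma~\ref{map}: each maximal component $D_{X,i}\times\{z\}$ lies in the unique maximal component $D_{X,i}\times Z$. Lemma~\ref{map} then identifies $\Sigma(j_z)$ with the vertex-map $\Delta(D_X)\to\Delta(D_X\times Z)$, which is visibly an isomorphism since the intersection components of $D_X\times Z$ are exactly $D_\alpha\times Z$. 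The local-coordinate verification you sketch is not needed; the paper dispatches this in two lines.
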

\begin{proof}
Let $X\subset T_X$ be a closed embedding into smooth variety.
 Consider the canonical principalization $\overline{T}_X\to T_X$ of $X\subset T_X$. It induces  the closed embedding $D_X\hookrightarrow  D_X\times Z$ of varieties with SNC. That is, by Lemma \ref{map}, $\Sigma(D_X)\to  \Sigma(D_X\times Z)$ is an isomorphism.

\end{proof}

\begin{lemma}\label{impo6}
Let $i: X\subset Y$ be a  closed embedding of quasiprojective varieties and $Z$ be a  smooth variety with a fixed point $z\in Z$  consider the induced morphism  $i_Z: X\times Z\subset Y\times Z$ extended by the identity on $Z$ and closed embeddings $j_{X,z}: X\simeq X\times\{z\}\subset X\times Z$, and $j_{Y,z}: Y\simeq Y\times\{z\}\subset Y\times Z$. Then the commutative diagram 

$$\begin{array}{cccl}

X&\hookrightarrow & Y&  \\
\dar  && \dar &\\
 X\times Z& \hookrightarrow &Y\times Z &\end{array}$$
induces the commuative diagram of the maps of complexes

$$\begin{array}{cccl}

\Sigma(X)&\to & \Sigma(Y)&  \\
\dar  && \dar &\\
 \Sigma(X\times Z)& \to &\Sigma(Y\times Z) &\end{array}$$

with induced vertival maps given by isomorphism.

\end{lemma}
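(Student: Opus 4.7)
The plan is to realize both horizontal arrows by a single, coherent resolution and to identify the vertical arrows with the natural ``$\times Z$'' operation on dual complexes. First I would choose a closed embedding $Y \hookrightarrow T$ into a smooth ambient variety of sufficiently large dimension, producing the sequence $X \subset Y \subset T$ and, by taking the product with $Z$, the sequence $X \times Z \subset Y \times Z \subset T \times Z$ with $T \times Z$ smooth. Applying the extended Hironaka desingularization first to $X$ and then to the strict transform of $Y$ yields a proper birational map $\overline{T} \to T$ together with a pair of SNC divisors $D_X \subset D_Y$ on $\overline{T}$ whose inclusion of dual complexes represents the top row $\Sigma(X) \to \Sigma(Y)$ by Lemma \ref{impo5}.

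Next I would take the product of this resolution with $Z$: the morphism $\overline{T} \times Z \to T \times Z$ is proper and birational and transforms $X \times Z \subset Y \times Z$ into the varieties with SNC $D_X \times Z \subset D_Y \times Z$. By Lemma \ref{impo5} applied inside $T \times Z$, the bottom row $\Sigma(X \times Z) \to \Sigma(Y \times Z)$ is represented by the inclusion $\Delta(D_X \times Z) \hookrightarrow \Delta(D_Y \times Z)$. For a smooth connected $Z$ and any variety with SNC $D$ there is a canonical equality $\Delta(D \times Z) = \Delta(D)$, since each maximal component $D_i \times Z$ of $D \times Z$ corresponds to $D_i$ with exactly the same combinatorics of intersections; moreover this equality is natural in $D$ with respect to closed embeddings of SNC varieties.

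Under these natural identifications the bottom row becomes literally equal to the top row, so the vertical identifications commute with the horizontal inclusions. It remains to check that the vertical identifications coincide, up to homotopy, with the canonically defined maps $\Sigma(j_{X,z})$ and $\Sigma(j_{Y,z})$ of Lemma \ref{map2}. This is precisely what Lemma \ref{map} gives, applied to the closed embeddings of varieties with SNC $D_X \hookrightarrow D_X \times Z$ and $D_Y \hookrightarrow D_Y \times Z$: each maximal component $D_i$ of $D_X$ is contained in the unique maximal component $D_i \times Z$ of $D_X \times Z$, so the induced vertex map on dual complexes is exactly the identification above. The fact that the vertical maps are isomorphisms is the content of Lemma \ref{map2} itself.

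The main point to verify is that the product resolution $\overline{T} \times Z \to T \times Z$ really does produce varieties with SNC on which Lemma \ref{impo5} applies; this reduces to the étale-local statement that SNC is preserved under product with a smooth factor, which is immediate from the local coordinate description in the definition of SNC. Granting this, the commutative square of combinatorial types, together with the fact that the vertical arrows are isomorphisms, drops out of the naturality of the $\times Z$ identification of dual complexes in the subdivisor $D_X \subset D_Y$.
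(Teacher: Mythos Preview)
Your proposal is correct and follows essentially the same route as the paper: embed $X\subset Y$ into a smooth ambient, resolve to obtain SNC divisors $D_X\subset D_Y$, take the product with $Z$ to handle the bottom row, and invoke Lemma~\ref{map} for the vertical isomorphisms. Your write-up is more detailed than the paper's (e.g.\ the explicit appeal to Lemma~\ref{impo5} and the check that $D\times Z$ is SNC), but the underlying argument is the same.
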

\begin{proof}
Let $X\subset Y\subset Z_X$ be a closed embedding into smooth variety $Z_X$.
 Consider the canonical principalization $\overline{Z_X}\to Z_X$ of $X\subset Y\subset Z_X$. It induces  the closed embedding $D_X\subset D_Y\subset \overline{Z_X}$ and   $D_X\times Z\subset  D_Y\times Z \subset \overline{Z_X}\times Z$, and commutative diagram of SNC divisors
$$\begin{array}{cccl}

D_X&\hookrightarrow & D_Y&  \\
\dar  && \dar &\\
 D_X\times Z& \hookrightarrow &D_Y\times Z &\end{array}$$
and the corresponding  diagram of the dual complexes:
$$\begin{array}{cccl}

\Delta(D_X)&\hookrightarrow & \Delta(D_Y)&  \\
\dar j_{X,z}  && \dar j_{Y,z}&\\
 \Delta(D_X\times Z)& \hookrightarrow &\Delta(D_Y\times Z) &\end{array}$$ 
 
 with vertical maps given by isomorphisms, as in  Lemma \ref{map}. 
 
 
 \end{proof}

\begin{lemma} \label{impo7} Let $X, Y$ be quasiprojective varieties. Let $X\subset Z_X$ be a closed embedding into a smooth variety $Z_X$. 
Consider the map $X\times Y\to Z_X\times Y$. Let $j: Y\subset Z_X\times Y$ denote an embedding.
Then the composition of the maps  $$\Sigma(X)\times \Sigma(Y)\simeq \Sigma(X\times Y)\to \Sigma(Z_X\times Y)\buildrel{({\Sigma(j)})^{-1}}\over\to \Sigma(Y)$$  defines a projection $\pi_{\Sigma(Y)}$.
\end{lemma}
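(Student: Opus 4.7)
The plan is to exploit the fact that $\Sigma(Z_X)$ is contractible (since $Z_X$ is smooth), together with the naturality of the homotopy equivalence $\Sigma(A\times B)\simeq \Sigma(A)\times \Sigma(B)$ from Step~3 with respect to closed embeddings in the first factor. Once these are in place, the composite in the statement collapses to the projection $\Sigma(X)\times \Sigma(Y)\to \Sigma(Y)$.

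First, I would verify that $\Sigma(Z_X)\simeq \{\mathrm{pt}\}$ whenever $Z_X$ is a smooth irreducible variety. By Proposition~\ref{impo3}, one may compute $\Sigma(Z_X)$ from any smooth proper birational modification along which the proper transform of $Z_X$ has SNC. Taking the identity $Z_X\to Z_X$, the proper transform is $Z_X$ itself, regarded as a variety with SNC having a single maximal component, whose dual complex is a single vertex. In particular, the induced map $\Sigma(i)\colon \Sigma(X)\to \Sigma(Z_X)$ factors, up to homotopy, through this point.

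Next, I would establish that under the canonical equivalences $\Sigma(X\times Y)\simeq \Sigma(X)\times \Sigma(Y)$ and $\Sigma(Z_X\times Y)\simeq \Sigma(Z_X)\times \Sigma(Y)$ of Step~3, the map $\Sigma(i\times \mathrm{id}_Y)$ agrees with $\Sigma(i)\times \mathrm{id}_{\Sigma(Y)}$ up to homotopy. To this end, embed $Y\subset Z_Y$ in a smooth ambient variety and apply the canonical extended principalization to the chain $X\times Y\subset Z_X\times Y\subset Z_X\times Z_Y$. By the functoriality of the Hironaka algorithm with respect to the smooth projections $Z_X\times Z_Y\to Z_X$ and $Z_X\times Z_Y\to Z_Y$, the centers for the principalization of $X\times Y$ can be lifted from independent principalizations of $X\subset Z_X$ and $Y\subset Z_Y$, producing SNC divisors whose components are products of components coming from the two factors. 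The induced inclusion of dual complexes $\Delta(D_{X\times Y})\subset \Delta(D_{Z_X\times Y})$ then splits as $(\Delta(D_X)\subset \Delta(D_{Z_X}))\otimes \mathrm{id}_{\Delta(D_Y)}$, and Lemma~\ref{product} passes this factorization to the homotopy category.

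Finally, by Lemma~\ref{map2} the map $\Sigma(j)$ is an isomorphism, and under the identifications above it corresponds to the canonical inclusion $\Sigma(Y)\simeq \{\mathrm{pt}\}\times \Sigma(Y)\hookrightarrow \Sigma(Z_X)\times \Sigma(Y)$. The composite in the statement therefore equals, up to homotopy,
\[
\Sigma(X)\times \Sigma(Y) \xrightarrow{\Sigma(i)\times \mathrm{id}} \Sigma(Z_X)\times \Sigma(Y) \simeq \{\mathrm{pt}\}\times \Sigma(Y) \xrightarrow{\simeq} \Sigma(Y),
\]
which is the projection $\pi_{\Sigma(Y)}$ since $\Sigma(i)$ factors through the point. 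The main obstacle will be the naturality claim in the third paragraph: one must select canonical principalizations of the three schemes in the chain $X\times Y\subset Z_X\times Y\subset Z_X\times Z_Y$ so that the resulting map of dual complexes literally factors as a product, and then invoke Lemma~\ref{impo5} to deduce that the homotopy class of the induced map is independent of these choices.
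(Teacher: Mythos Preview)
Your overall strategy is sound, but the paper's proof is considerably more direct and avoids the ``main obstacle'' you yourself flag. The paper simply chooses an embedding $Y\subset Z_Y$, applies the extended desingularizations separately to obtain SNC divisors $D_X\subset\overline{Z}_X$ and $D_Y\subset\overline{Z}_Y$, and then observes that the explicit inclusion of SNC varieties $D_X\times D_Y\subset \overline{Z}_X\times D_Y$ satisfies the hypothesis of Lemma~\ref{map}: each maximal component $D_{X,i}\times D_{Y,j}$ lies in the unique maximal component $\overline{Z}_X\times D_{Y,j}$. The vertex map $\alpha$ of Lemma~\ref{map} is then \emph{literally} the simplicial projection $\Delta(D_X)\otimes\Delta(D_Y)\to\Delta(D_Y)$, and Lemma~\ref{map} (together with Lemma~\ref{map2}) identifies this with the map of combinatorial types induced by the closed embedding. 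No abstract naturality statement is needed.

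One technical point in your write-up deserves correction: the appeal to ``functoriality of the Hironaka algorithm with respect to the smooth projections $Z_X\times Z_Y\to Z_X$ and $Z_X\times Z_Y\to Z_Y$'' does not give what you want. Functoriality under a smooth map $\phi$ applies to the pullback ideal $\phi^*\mathcal I$, but the ideal of $X\times Y$ is not the pullback of $\mathcal I_X$ (that pullback cuts out $X\times Z_Y$, not $X\times Y$), so the canonical principalization of $X\times Y$ need not have product centers. What rescues your argument is exactly what the paper uses: by Step~3 the product $D_X\times D_Y$ already represents $\Sigma(X\times Y)$, and by Lemma~\ref{impo5} any SNC model computes the same map of combinatorial types. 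Once you commit to the product model, Lemma~\ref{map} gives the projection in one line, making the detour through contractibility of $\Sigma(Z_X)$ and the abstract product-naturality unnecessary.
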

\begin{proof} Consider the closed embedding $X\times Y\subset Z_X\times Z_Y$, and the induced $D_X\times D_Y\subset \overline{Z}_X\times D_Y\subset \overline{Z}_X$. It satisfies the condition from Lemma \ref{map}.
The induced map $\alpha$  is nothin but the projection on $\Sigma(Z_X\times Y)\simeq \Sigma(Y)$. Thus, by Lemmas \ref{map}, and \ref{map2},
the embeddding 
$j: X\times Y\to Z_X\times Y$ induces the map $\Sigma(j): \Sigma(X\times Y)\to \Sigma(Z_X\times Y)$ which is homotopy equivalent to the projection via the identication $\Sigma(Z_X\times Y)\to \Sigma(Y)$.
\end{proof}

{\bf Step 4.} {\bf Maps associated with general morphisms}

\begin{definition} Let $f: X\to Y$  be a morphism of quasiprojetive varieties. Then the induced map $\Sigma(f): \Sigma(X)\to \Sigma(Y)$ is defined as follows. Let $X\subset Z_X$ be a closed embedding into a smooth ambient variety $Z_X$, with a fixed point $x\in X\subset Z_X$.
Consider the composition $X\buildrel{f_1}\over\hookrightarrow X\times Y\buildrel{f_2}\over\hookrightarrow Z_X\times Y$ of the graph inclusion $f_1$ followed by the induced inclusion $f_2$. The composition morphism  $X\to Z_X\times Y$ defines 
the map $$\Sigma(f): \Sigma(X)\buildrel{\Sigma(f_2f_1)}\over\to \Sigma(Z_X\times Y)\buildrel{({\Sigma(j)})^{-1}}\over\simeq \Sigma(Y)$$

\end{definition}

Moreover the map $\Sigma(f)$ is a composition of two uniquely defined maps $$\Sigma(f_1): \Sigma(X)\to \Sigma(X\times Y)\simeq \Sigma(X)\times \Sigma(Y) $$ and the projection (see Lemma \ref{impo7}) $$\pi_{\Sigma(Y)}=({\Sigma(j)})^{-1}\Sigma(f_2): \Sigma(X\times Y)\simeq \Sigma(X)\times \Sigma(Y) \to  \Sigma(Y)$$ and thus it is uniquely defined up to homotopy type.


\begin{lemma} Let $f:X\to Y$ and $g: Y\to Z$ be morphisms of quasiprojective algebraic varieties  and fix $x\in X$, $y=f(x)\in Y$ and $z=gf(x)\in Z$. Then $\Sigma(g)\Sigma(f): \Sigma(X)\to \Sigma(Z) $ and $\Sigma(gf):\Sigma(X)\to \Sigma(Z)$ are homotopy equivalent. 
 
\end{lemma}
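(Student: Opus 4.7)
The plan is to exhibit both $\Sigma(gf)$ and $\Sigma(g)\Sigma(f)$ as different factorizations of a single canonical map: $\Sigma$ applied to the triple graph, followed by a combinatorial projection to $\Sigma(Z)$.

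Fix closed embeddings $X\subset Z_X$ and $Y\subset Z_Y$ into smooth quasiprojective ambient varieties (with basepoints $x$, $y=f(x)$, $z=gf(x)$). Introduce the triple graph
$$\Gamma: X \to X\times Y \times Z, \qquad x \mapsto (x,f(x),gf(x)),$$
which is a closed embedding. It admits two factorizations through intermediate closed embeddings,
$$\Gamma = (id_X \times \Gamma_g) \circ \Gamma_f = (\Gamma_f \times id_Z) \circ \Gamma_{gf},$$
where $\Gamma_f, \Gamma_g, \Gamma_{gf}$ denote the graphs of the respective morphisms. By the functoriality of $\Sigma$ on chains of closed embeddings established in Step 2 (Lemmas \ref{ppp} and \ref{impo5}), both factorizations determine the same homotopy class of maps $\Sigma(\Gamma): \Sigma(X) \to \Sigma(X\times Y\times Z)$.

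Let $P: \Sigma(X\times Y\times Z) \to \Sigma(Z)$ be the canonical combinatorial projection, obtained from the product identification $\Sigma(X\times Y\times Z)\simeq \Sigma(X)\times \Sigma(Y)\times \Sigma(Z)$ of Lemma \ref{product} together with the iterated use of Lemma \ref{impo7}. Two compatibility statements sit at the heart of the argument:
\begin{enumerate}
\item[(a)] $P \circ \Sigma(\Gamma_f \times id_Z) \simeq \pi_{\Sigma(Z)}$ as maps $\Sigma(X\times Z)\to \Sigma(Z)$;
\item[(b)] $P \circ \Sigma(id_X \times \Gamma_g) \simeq \Sigma(g)\circ \pi_{\Sigma(Y)}$ as maps $\Sigma(X\times Y)\to \Sigma(Z)$.
\end{enumerate}
Granting (a) and (b) and combining them with the two factorizations of $\Sigma(\Gamma)$ yields
$$\Sigma(gf) = \pi_{\Sigma(Z)} \circ \Sigma(\Gamma_{gf}) \simeq P\circ \Sigma(\Gamma) \simeq \Sigma(g)\circ \pi_{\Sigma(Y)}\circ \Sigma(\Gamma_f) = \Sigma(g)\Sigma(f),$$
which is the desired homotopy equivalence.

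The main obstacle is verifying (a) and (b); both are naturality assertions for the canonical combinatorial projections under product-compatible closed embeddings. For (a), the embedding $\Gamma_f \times id_Z$ has image $\{(x,f(x),z)\}\subset X\times Y\times Z$; after further embedding $Y\subset Z_Y$ and using the basepoint $y\in Z_Y$, Lemma \ref{impo6} identifies the induced map on combinatorial types with one factoring through $X\times Z_Y\times Z$, and iterating Lemma \ref{impo7} collapses the smooth factor $\Sigma(Z_Y)$, leaving precisely the projection $\Sigma(X\times Z)\to \Sigma(Z)$. For (b), the same strategy embeds $id_X\times \Gamma_g$ into $Z_X\times Y\times Z$ via the basepoint $x\in Z_X$; iterated Lemma \ref{impo7} collapses the smooth factor $\Sigma(Z_X)$, reducing $P\circ \Sigma(id_X\times \Gamma_g)$ to the projection onto $\Sigma(Y\times Z)$ followed by $\pi_{\Sigma(Z)}^{Y\times Z}\circ \Sigma(\Gamma_g)$, which is $\Sigma(g)\circ \pi_{\Sigma(Y)}$ by the definition of $\Sigma(g)$. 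In both cases the verification is a diagram chase in the homotopy category, assembling only the already-established compatibility lemmas.
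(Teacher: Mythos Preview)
Your approach and the paper's are built on the same core idea: factor through the triple product $X\times Y\times Z$ (or an enlargement of it by smooth ambients) and compare two ways of reaching $\Sigma(Z)$. The paper, however, organizes this as a single explicit ladder of closed embeddings
\[
X\hookrightarrow X\times Y\hookrightarrow T_X\times Y\hookrightarrow T_X\times Y\times Z\hookrightarrow T_X\times T_Y\times Z
\]
with vertical basepoint inclusions from $Y$, $Y\times Z$, $T_Y\times Z$, $Z$; every square is then an instance of Lemma~\ref{impo6} (product with a \emph{smooth} factor), and Lemma~\ref{impo5} controls the horizontal compositions. Your reformulation hides these ambients inside an abstract projection $P:\Sigma(X\times Y\times Z)\to\Sigma(Z)$ and then asserts the two naturality statements (a) and (b).

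The concern is that (a) and (b) are not immediate from Lemmas~\ref{impo6} and~\ref{impo7} as stated: those lemmas only give commutative squares when the extra factor is smooth, whereas your $P$ and the maps $\Sigma(\Gamma_f\times id_Z)$, $\Sigma(id_X\times\Gamma_g)$ involve products with the possibly singular $X$, $Y$, $Z$. Your sketched verifications (``embed $Y\subset Z_Y$, collapse the smooth factor'') are exactly the moves that, once written out, rebuild the paper's ladder: one must pass to $T_X\times Y$, $T_X\times Y\times Z$, $T_X\times T_Y\times Z$ and invoke Lemma~\ref{impo6} square by square. So your proposal is correct but not genuinely different; making (a) and (b) rigorous amounts to reproducing the paper's diagram, and the paper's presentation has the advantage of making each application of Lemma~\ref{impo6} visible rather than bundled into an abstractly defined $P$.
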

\begin{proof}
Consider the composition of closed embeddings maps 

$$\begin{array}{cccccccl}

X\hookrightarrow &X\times Y\hookrightarrow &T_X\times Y&\hookrightarrow &T_X\times Y\times Z& \hookrightarrow & T_X\times T_Y\times Z& \\
&&\uar  && \uar &&  \uar  &\\
 && Y& \hookrightarrow &Y\times Z &\hookrightarrow &   T_Y\times Z&\\ 
&& &&  &&  \uar  &\\

&& && &&   Z&
\end{array}$$
induces, by Lemmas \ref{impo5}, \ref{impo6}, the commutative diagram of combinatorial types with vertical arrows defined by isomorphisms.

$$\begin{array}{cccccccl}

\Sigma(X)\hookrightarrow &\Sigma(X\times Y)\hookrightarrow &\Sigma(T_X\times Y)&\hookrightarrow &\Sigma(T_X\times Y\times Z)& \hookrightarrow & \Sigma(T_X\times T_Y\times Z)& \\
\uar id_{\Sigma(X)}&&\uar  && \uar &&  \uar  &\\
 \Sigma(X)&\buildrel{\Sigma(f)}\over\longrightarrow & \Sigma(Y)& \hookrightarrow &\Sigma(Y\times Z) &\hookrightarrow &   \Sigma(T_Y\times Z)&\\ 
\uar id_{\Sigma(X)}&&\uar id_{\Sigma(Y)} &&  &&  \uar  &\\

 \Sigma(X)&\buildrel{\Sigma(f)}\over\longrightarrow & \Sigma(Y)&&\buildrel{\Sigma(g)}\over\longrightarrow &&   \Sigma(Z)&
\end{array}$$
It follows from commutativity of the diagram that $\Sigma(g)\Sigma(f)$ and $\Sigma(gf)$ are homotopy equivalent. 
\end{proof}

{\bf Step 5.} {\bf Weight Deligne filtration and the combinatorial type of complex projective schemes of finite type}.

Recall that for   a (projective) simple
 normal crossing divisor $D$ , or in general a SNC variety  
  the weight zero  part exactly coincides with the cohomology  of the
  dual complex $\Delta(D)$:

 \begin{lemma}[Deligne]  $W_0(H^i(D,\CC))=H^i(\Delta(D), \CC)$
\end{lemma}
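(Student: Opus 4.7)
The plan is to identify both sides of the equality as the $(i,0)$-entry on the $E_\infty$-page of a single spectral sequence, namely the Mayer--Vietoris (cohomological descent) spectral sequence associated to the covering of $D$ by its maximal components. First I would set up the semi-simplicial variety $D_\bullet$ whose $p$-simplices are the irreducible components of the $(p+1)$-fold intersections $D_{j_0}\cap\ldots\cap D_{j_p}$, with face maps given by the natural inclusions, and consider the associated first-quadrant spectral sequence
$$E_1^{p,q} = H^q(D^{[p]},\CC) \Longrightarrow H^{p+q}(D,\CC),$$
where $D^{[p]}$ is the disjoint union of all $(p+1)$-fold intersection components and the $d_1$-differential is the alternating sum of the restriction maps dual to the face maps.

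Next I would focus on the bottom row $q=0$. Each component $D_\alpha$ of $D^{[p]}$ is smooth and connected, so $H^0(D_\alpha,\CC)=\CC$; hence $E_1^{p,0}=\CC^{N_p}$, where $N_p$ is the number of irreducible components of $D^{[p]}$. By the construction of $\Delta(D)$ recalled in Section 2, $N_p$ is exactly the number of $p$-simplices, and a direct inspection shows that the $d_1$-differential coincides with the simplicial coboundary of the cochain complex $C^\bullet(\Delta(D),\CC)$. Consequently
$$E_2^{p,0} = H^p(\Delta(D),\CC).$$

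To finish, I would invoke Deligne's theory of mixed Hodge structures. Since $D$ is projective, each stratum $D^{[p]}$ is smooth and projective, so $H^q(D^{[p]},\CC)$ carries a pure Hodge structure of weight $q$. The above spectral sequence is a spectral sequence of mixed Hodge structures with strict differentials and therefore degenerates at $E_2$; moreover the induced filtration on the abutment coincides with Deligne's weight filtration $W_\bullet$. No term of negative weight appears, so $W_{-1}H^i(D,\CC)=0$, and restricting to $q=0$ gives
$$W_0H^i(D,\CC) \;=\; \operatorname{Gr}^W_0 H^i(D,\CC) \;=\; E_\infty^{i,0} \;=\; E_2^{i,0} \;=\; H^i(\Delta(D),\CC).$$

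The main obstacle is not the combinatorial identification, which is essentially bookkeeping once the dual complex is set up correctly, but the Hodge-theoretic input: the degeneration at $E_2$ and the fact that the filtration induced on the abutment is Deligne's weight filtration. This is precisely the content of Deligne's original construction of the mixed Hodge structure on a singular projective variety, so it cannot be bypassed. For a general SNC variety (rather than just a divisor) the argument goes through unchanged, since the cohomological descent formalism only uses that each closed stratum is smooth and projective.
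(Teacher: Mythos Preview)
The paper does not give its own proof of this lemma: it is stated as a recalled result and attributed to Deligne (with \cite{deligne} in the bibliography), serving as input for the subsequent arguments in Step~5. So there is no ``paper's proof'' to compare against.

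Your outline is correct and is precisely the standard argument. The semi-simplicial resolution $D_\bullet \to D$ by the smooth projective strata $D^{[p]}$ gives cohomological descent, the bottom row of $E_1$ is the simplicial cochain complex of $\Delta(D)$ because the $p$-simplices of $\Delta(D)$ are by definition indexed by the irreducible components of the $(p{+}1)$-fold intersections, and Deligne's theory supplies both the $E_2$-degeneration and the identification of the resulting filtration on the abutment with the weight filtration. Your last paragraph is honest about where the real content lies: the Hodge-theoretic input is exactly what one must quote from \cite{deligne} (or from \cite{ps}, \cite{gnpp}), and is not something you can reprove here. One small caution: the $E_2$-degeneration of this particular spectral sequence is usually deduced from the existence of the mixed Hodge structure and strictness with respect to $W$, rather than being an independent ingredient; your phrasing already reflects this, but make sure you cite a source that states it in the form you need (e.g.\ \cite{ps}, \S5.3, or \cite{gnpp}).
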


 Thus  $W_0$ 
can be understood   as the ``combinatorial part'' of the cohomology, and  as a generalization of  the cohomology  of the dual complexes of SNC varieties. Both notions can be compared via the relevant Hironaka resolutions. 
In fact we have
\begin{proposition}\label{6} Let $\phi: X\dashrightarrow X'$   be a proper birational map between complete nonsingular smooth varieties, and suppose $Z\subset X$ and $Z'\subset X'$ are two closed subvarieies such  that  $\phi(Z)=Z$. Then 
$$\dim W_0H^i(Z)=\dim W_0H^i(Z')$$

\end{proposition}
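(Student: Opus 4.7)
The plan is to replace both $Z\subset X$ and $Z'\subset X'$ by the \emph{same} SNC variety on a common smooth modification, and then relate the weight-zero cohomology of the singular subvariety to that of its SNC total transform via the excision long exact sequence of compactly supported cohomology.

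First I would construct a common smooth ambient. Let $\Gamma\subset X\times X'$ be the closure of the graph of $\phi$ on its domain of definition, and let $\tilde X\to \Gamma$ be a Hironaka desingularization. This yields proper birational morphisms $p\colon \tilde X\to X$ and $p'\colon \tilde X\to X'$ from the smooth complete $\tilde X$. Since $\phi$ restricts to an isomorphism over some dense open $U\subset X$, the strict transform of $Z$ via $p$ and the strict transform of $Z'$ via $p'$ coincide over $p^{-1}(U)$; being closures of the same locally closed subvariety in $\tilde X$, they are equal as a single closed subvariety $\tilde Z\subset \tilde X$. Applying the canonical extended Hironaka embedded desingularization to $\tilde Z\subset \tilde X$, I obtain a smooth proper birational modification $\bar X\to \tilde X$ on which the total transform of $\tilde Z$ is an SNC variety $D\subset \bar X$.

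Next I would match the combinatorial types. By Proposition \ref{impo3} applied to the composition $\bar X\to X$, the combinatorial type $\Sigma(Z)$ is the homotopy type of $|\Delta(D)|$; applied to $\bar X\to X'$ it gives $\Sigma(Z')\simeq |\Delta(D)|$. Combined with Deligne's lemma $W_0 H^i(D,\CC)=H^i(\Delta(D),\CC)$, the proposition reduces to showing $\dim W_0 H^i(Z)=\dim W_0 H^i(D)$, together with the analogous statement on the $X'$-side.

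For this last point I would exploit the fact that $\bar X\to X$ is an isomorphism on the complement of $Z$, so $U:=\bar X\setminus D\cong X\setminus Z$. The excision long exact sequence of compactly supported cohomology for the closed inclusion $Z\subset X$ reads
\[
\cdots \to H^i_c(U)\to H^i(X)\to H^i(Z)\to H^{i+1}_c(U)\to H^{i+1}(X)\to\cdots,
\]
and since $X$ is smooth and complete, $H^j(X)$ is pure of weight $j$, hence $W_0 H^j(X)=0$ for $j\geq 1$. Taking the weight-zero part collapses the sequence into an isomorphism $W_0 H^i(Z)\cong W_0 H^{i+1}_c(U)$ for $i\geq 1$, with the $i=0$ case handled by counting connected components. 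Exactly the same argument applied to $(\bar X,D)$ gives $W_0 H^i(D)\cong W_0 H^{i+1}_c(U)$, yielding $W_0 H^i(Z)\cong W_0 H^i(D)$. The parallel argument on the $X'$-side then completes the proof. The main obstacle I would expect is bookkeeping the identifications of strict transforms and total transforms so that $\bar X\setminus D$ really equals $X\setminus Z$ as open subvarieties of the two smooth ambients; once the extended Hironaka guarantees this coincidence of open complements, the weight-zero identifications via the long exact sequence, together with Proposition \ref{impo3} and Deligne's lemma, yield the conclusion in a direct manner.
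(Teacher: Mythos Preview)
Your excision-plus-purity strategy is the same engine the paper uses, but the way you set up the common roof has two genuine gaps.

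First, the claim that the strict transforms of $Z$ and $Z'$ in $\tilde X$ coincide is not justified. Your argument needs each strict transform to be the closure of its intersection with $p^{-1}(U)$, which fails whenever $Z$ (or $Z'$) has a component lying entirely in the indeterminacy locus of $\phi$ (respectively of $\phi^{-1}$). For a concrete instance, take $\phi\colon X\to X'$ to be the blow-up of a point $q\in X'$, with $Z$ the exceptional curve and $Z'=\{q\}$: then $\tilde X=X$, the strict transform of $Z$ via $p=\mathrm{id}$ is $Z$ itself, while the strict transform of $Z'$ via $p'=\phi$ is empty.

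Second, even granting a common $\tilde Z$, your identification $\bar X\setminus D\cong X\setminus Z$ requires $\bar X\to X$ to be an isomorphism off $Z$. The extended Hironaka step $\bar X\to\tilde X$ is indeed an isomorphism off $\tilde Z$, but the graph resolution $\tilde X\to X$ modifies $X$ over the indeterminacy locus of $\phi$, which is not assumed to lie in $Z$. Hence the two open complements are in general related only by a proper birational morphism of smooth varieties, not an isomorphism, and to compare their $W_0H^i_c$ you would still need an external input of the type ``$W_0H^i_c$ is a proper birational invariant of smooth varieties'' --- which is precisely what you were hoping to avoid.

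The paper sidesteps both issues: it applies the Weak Factorization Theorem to reduce to the case where $\phi$ is a single blow-up with smooth center, and then runs a five-lemma argument on the parallel excision sequences for $(X,Z)$ and $(X',Z')$, invoking Lemma~5.4 of \cite{abw} to obtain $W_0H^i_c(U)\cong W_0H^i_c(U')$ and $W_0H^i(X)\cong W_0H^i(X')$ directly, without ever requiring the complements to be isomorphic or the subvarieties to share a common strict transform.
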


 \begin{proof} By the Weak Factorization theorem (\cite{wlodarczyk}, \cite{akmw}) the varieties  $X$ and $X'$ can be connected
by a sequence of blow-ups with smooth centers. It suffices prove the proposition in the case when $\phi: X\to X'$ 
is the blow up along a smooth center $C$.
Let $U= X\setminus Z$, $U'=X'\setminus
Z'$.  Consider the diagram
 
 $$
\xymatrix{
\ldots W_0H^{i-1}(Z)\ar[r]\ar[d] & W_0H_c^i(U)\ar[r]\ar[d]^{\simeq} & W_0H^i(X)\ar[r]\ar[d]^{\simeq} & W_0H^i(Z))\ar[d]\ldots \\ 
 W_0H^{i-1}(Z')\ar[r] & W_0H_c^i(U')\ar[r] & W_0H_c^i(X')\ar[r] & W_0H^i(Z')
}
$$
It folows from lemma 5.4 \cite{abw} that  $W_0H_c^i(U)\to W_0H_c^i(U')$ and  $W_0H_c^i(X)\to W_0H_c^i(X')$ are isomorphisms.  By the diagram and 5 lemma we get that $$W_0H^i(Z)\to W_0H^i(Z')$$  is an isomorphism.

\end{proof}

 \begin{remark} In case $Z$ and $Z'$ are SNC varieties we get a somewhat weaker  version (over $Q$) of Theorem 7.9 (\cite{abw}). On the other hand $Z$ and $Z'$ are arbitrary  dual complexes generalize the observation for to a more general situation. 
\end{remark} 

The following theorem extends the Deligne Lemma
\begin{theorem} Let $X$ be a projective  complex variety, and $\Sigma(X)$ be its combinatorial type. Then
$$H^i(\Sigma(X),\CC)\simeq W_0(H_i(X,\CC))$$ 
\end{theorem}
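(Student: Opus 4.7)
The plan is to reduce the statement to the SNC case, where Deligne's Lemma applies directly, via Hironaka desingularization, and then transport the identification back to $X$ itself by using the birational invariance of $W_0$ supplied by Proposition~\ref{6}.

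First, I would fix a closed embedding $X\subset Z$ of $X$ into a smooth projective variety $Z$ (for instance $Z=\bP^N$), and apply the extended Hironaka desingularization of the pair $X\subset Z$ to obtain a proper birational morphism $\phi:\overline{Z}\to Z$ of smooth projective varieties, together with a projective SNC variety $D:=D^{\overline{Z}}_X$ on $\overline{Z}$ satisfying $\phi(D)=X$. By Proposition~\ref{impo3}, the combinatorial type $\Sigma(X)$ is the homotopy type of the dual complex $\Delta(D)$, so
$$H^i(\Sigma(X),\CC)\cong H^i(\Delta(D),\CC).$$
Next, since $D$ is a projective SNC variety, Deligne's Lemma (stated just above the theorem) furnishes
$$H^i(\Delta(D),\CC)\cong W_0(H^i(D,\CC)).$$
Finally, I would invoke Proposition~\ref{6} on the proper birational map $\phi:\overline{Z}\to Z$ of smooth projective varieties, with the closed subvarieties $D\subset\overline{Z}$ and $X\subset Z$ satisfying $\phi(D)=X$, to conclude
$$W_0(H^i(D,\CC))\cong W_0(H^i(X,\CC)).$$
Composing the three isomorphisms yields the desired identification.

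The main obstacle will be verifying that the hypotheses of Proposition~\ref{6} are genuinely met and that the composite isomorphism is intrinsic. Smoothness and projectivity of $\overline{Z}$ are immediate, since the extended desingularization is a finite sequence of blow-ups with smooth centers starting from the smooth projective $Z$. The equality $\phi(D)=X$ holds because every center of blow-up in the embedded part of the desingularization is contained in a successive strict transform of $X$, so all previously created exceptional components lie above $X$, while the final blow-up along the smooth $\widetilde{X}$ produces an exceptional divisor mapping onto $\widetilde{X}$ and hence onto $X$. Independence of the chosen embedding and of the chosen resolution has already been established in Propositions~\ref{22} and~\ref{impo3}, so the resulting isomorphism depends only on the intrinsic geometry of $X$ and is, up to canonical isomorphism, well-defined.
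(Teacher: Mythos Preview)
Your argument is correct and follows essentially the same strategy as the paper: embed $X$ into a smooth projective ambient variety, replace $X$ by an SNC divisor $D$ via a Hironaka-type resolution, apply Deligne's Lemma to $D$, and then invoke Proposition~\ref{6} to transport $W_0$ back to $X$. The paper uses canonical principalization rather than the extended embedded desingularization and records only the equality of dimensions, while you work with the extended desingularization, cite Proposition~\ref{impo3} explicitly, and verify the hypotheses of Proposition~\ref{6} more carefully; these are cosmetic differences, not a different route.
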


\begin{proof} Consider any embedding $X\to \bP^n$, and let $\overline{\sigma}: \overline{Z}\to \bP^n$ be the canonical Hironaka principalization $\overline{\sigma}: \overline{Z}\to \bP^n\times \bP^m$ of  the ideal of $X\to \bP^n$ and  $D$ be  the exceptional divisor. 
Then, by Proposition \ref{6} and Deligne Lemma, we have $$\dim(W_0(H_i(X,\CC))=\dim(W_0(H_i(D,\CC))=\dim(H_i(\Delta(D),\CC))=\dim(H_i(\Sigma(X),\CC))$$

\end{proof}



\section{Generic SNC}

Our next goal is to study  SNC morphisms which can be understood as  an extensions of the notion of smooth morphisms.

\begin{definition} Let $Y$ be an algebraic scheme over  a ground field $K$. An algebraic scheme $D$  over $Y$
together with the morphism $\psi: D\to Y$ will be  called a  {\it SNC-variety over $Y$ of dimension n} if for any point $p\in D$ there exists an open neighborhood $U\subset D$ and the morphism  $\phi: U\to D_A$, where $D_A\subset \bA^{n+1}$ is a SNC divisor in $\bA^{n+1}$ defined by the equation $x_1\cdot\ldots\cdot x_k=0$, where $k\leq n$, and such that the induced morphism
$$\phi\times \psi:  U\to D_A\times Y$$ is \'etale.
Alternatively we say that the morphism $D\to Y$ is SNC of dimension n.
An algebraic  scheme $D$ is an SNC variety of dimension $n$ if it is an SNC variety of dimension $n$ over $Spec{K}$.

\end{definition}
(In other words  the morphism $D\to Y$ is \'etale equivalent to the projection of $D_A\times Y\to Y$ along a SNC variety $D_A$.)


 If a variety  $D$ is SNC over $\Spec(K)$  then for any point $p\in D$ there exists local parameters on $D$  that is the set of functions $u_1,\ldots u_k,u_{k+1},\ldots u_n$ defining the \'etale 
morphism to $D_A\times \bA^{n-k}$ as above. We shall call the function $u_1\ldots u_k$ the local set of {\it fixed parameters}. Then, it follows from the definition that the completion of the local ring $$\widehat{\cO_{D,p}}\simeq K_p[[u_1,\ldots u_k,u_{k+1},\ldots, u_n]]/(u_1\cdot\ldots\cdot u_k),$$
where $K_p$ is the residue field of $D$ at $p$.

\begin{remark} The notion of SNC morphism is a natural extension of the smoothness of the morphism.
\end{remark}

We need the following extension of generic smoothness theorem \cite{Hartshorne}.

\begin{proposition} {\bf Generic SNC theorem} Let $D=\sum D_i$ be a  SNC variety  of dimension $n$. Consider any morphism 
$\psi: D\to Y$. There exists an open subset $V\subset Y$ such that $\psi^{-1}(V)\to V$ is SNC.
\end{proposition}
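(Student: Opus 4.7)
The plan is to adapt the classical generic smoothness argument in characteristic zero to the stratified setting of SNC varieties. First I would reduce to the case where $Y$ is smooth: since in characteristic zero the reduced scheme $Y_{\mathrm{red}}$ contains a dense open smooth subset, and since only an open $V\subset Y$ is sought, we may shrink $Y$ and assume it is smooth of dimension $d$. The SNC variety $D$ is stratified by its intersection strata $D_\alpha$—the smooth irreducible components of the sets $D_{j_0}\cap\cdots\cap D_{j_p}$—and this stratification is finite.

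Next, to each stratum $D_\alpha$ I apply classical generic smoothness (Hartshorne III.10.7) to the restriction $\psi|_{D_\alpha}\colon D_\alpha\to Y$, obtaining a dense open $V_\alpha\subset Y$ such that either $D_\alpha\cap\psi^{-1}(V_\alpha)=\emptyset$ or the induced morphism on $D_\alpha\cap\psi^{-1}(V_\alpha)$ is smooth. Setting $V:=\bigcap_\alpha V_\alpha$ still yields a dense open of $Y$, and I claim this $V$ works.

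The heart of the proof is the verification of the SNC condition at every $p\in\psi^{-1}(V)$. Let $D_1,\ldots,D_k$ be the maximal components of $D$ meeting at $p$ and let $D_p:=D_1\cap\cdots\cap D_k$ be the deepest stratum through $p$. By the absolute SNC structure of $D$, an \'etale neighborhood of $p$ in $D$ carries coordinates $x_1,\ldots,x_{n+1}$ with $x_1\cdots x_k=0$ cutting out $D$ locally, where $x_1,\ldots,x_k$ are the fixed parameters defining the divisorial components through $p$ and $x_{k+1},\ldots,x_{n+1}$ restrict to a regular system of parameters on $D_p$ at $p$. Choose local parameters $y_1,\ldots,y_d$ on $V$ at $\psi(p)$. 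Because $V\subset V_\alpha$ for the stratum $\alpha$ indexing $D_p$, the restriction $\psi|_{D_p}$ is smooth at $p$, and so $d(\psi^*y_1)|_{D_p},\ldots,d(\psi^*y_d)|_{D_p}$ are linearly independent in the $(n+1-k)$-dimensional cotangent space of $D_p$ at $p$. Setting $m:=n-d$, I complete these to a basis by selecting $m+1-k$ indices $i_1,\ldots,i_{m+1-k}\in\{k+1,\ldots,n+1\}$ so that $\{dx_{i_1},\ldots,dx_{i_{m+1-k}},\,d(\psi^*y_j)|_{D_p}\}_j$ is a basis of $T^*_pD_p$.

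With $\phi:=(x_1,\ldots,x_k,x_{i_1},\ldots,x_{i_{m+1-k}})$ mapping an open neighborhood $U$ of $p$ in $D$ into $D_A:=V(w_1\cdots w_k)\subset\bA^{m+1}$, I would then check that the product
$$\Phi:=\phi\times\psi\colon U\longrightarrow D_A\times V$$
is \'etale at $p$. On completed local rings this reduces to the assertion that the $n+1$ elements $x_1,\ldots,x_k,x_{i_1},\ldots,x_{i_{m+1-k}},\psi^*y_1,\ldots,\psi^*y_d$, lifted to the ambient $\bA^{n+1}$, form a regular system of parameters at the origin and map compatibly with the relation $w_1\cdots w_k\leftrightarrow x_1\cdots x_k$. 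The first assertion is precisely the linear independence of the differentials chosen above (the $dx_1,\ldots,dx_k$ form a direct summand, while modulo them the remaining $d(\psi^*y_j)$ and $dx_{i_l}$ span the quotient), and the second holds by construction of $\phi$. Openness of the \'etale locus then upgrades \'etaleness at $p$ to \'etaleness on a whole neighborhood, producing the required SNC chart.

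The main technical obstacle is this final local construction: one must cleanly interweave the fixed parameters witnessing the SNC structure of $D$ with the pullbacks of parameters from $Y$ and verify that the combined data gives an \'etale map to $D_A\times V$. The argument succeeds because at a point of the deepest stratum the fixed parameters vanish on $D_p$ while the pullbacks $\psi^*y_j$ restrict to independent functions on $D_p$, so the two systems of parameters do not interfere and can be assembled into a regular system of parameters on the ambient \'etale model.
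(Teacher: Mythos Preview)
Your proposal is correct and follows essentially the same approach as the paper: reduce to smooth irreducible $Y$, apply classical generic smoothness to every intersection stratum $D_\alpha$ to obtain a common open $V$, and then at each point $p$ of the deepest stratum through it assemble the fixed divisorial parameters, pullbacks of local parameters on $Y$, and complementary parameters on the stratum into an \'etale chart to $D_A\times V$, verified on completed local rings. The only cosmetic difference is the order of assembly---the paper first completes the stratum parameters and then adjoins the pullbacks from $Y$, whereas you first adjoin the pullbacks and then complete---but the resulting system of parameters and the \'etaleness check are the same.
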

\begin{proof}
We can assume that $Y$ is nonsingular and  irreducible and $\psi$ is dominating. Consider all the nonempty intersections $D_{i_1}\cap\ldots\cap D_{i_k}$. They are nonsingular and thus by generic smoothness we can find a nonempty open affine  $V\subset Y$  such that all the induced morphisms from intersections $$\psi_{i_1\ldots i_k}: D_{i_1}\cap\ldots\cap D_{i_k}\cap \psi^{-1}(V)\to V
$$
are smooth of dimension $n+1-k$. We shall prove that   $\psi^{-1}(V)\to V$ is SNC at any point $p\in \psi^{-1}(V)$.
We can assume that $p\in \psi^{-1}(V)\cap D_{1}\cap\ldots\cap D_{k}$, and that  it does not lie  in the intersection component of the smaller dimension. Consider the local fixed parameters $u_1\ldots u_k$  at $p$ describing the components  $ D_{i}\cap\psi^{-1}(V)$ through $p$. Let $u_{k+1}\ldots u_r $ be the local parameters on $V$ such that for the smooth morphism $$\psi_{1\ldots k}: D_{1}\cap\ldots\cap D_{k}\cap \psi^{-1}(V)\to V$$ the induced morphism 
$$\phi_0:=\psi_{1\ldots k}\times (u_{k+1}\ldots u_r): D_{1}\cap\ldots\cap D_{k}\cap \psi^{-1}(V)\to  V \times \bA^r$$ is \'etale
. Denote by  $v_{r+1},\ldots,v_n $  the local parameters at $y:=\psi_{1\ldots k}(p)\in Y$, and by $$u_{r+1}=\psi^*(v_{r+1}),\ldots, u_n:=\psi^*(v_n) $$ the induced functions on $\psi^{-1}(V)$.

Then  $$ u_1\ldots u_k, u_{k+1}\ldots u_r,u_{r+1},\ldots, u_n $$ are local parameters at $p\in V$.

Denote by $v_1,\ldots,v_r$ the natural coordinates on the closed subschemes $D_A\times \bA^{r-k}\subset \bA^{r}$ defined by $v_1\cdot\ldots\cdot v_k=0$.
Consider the   morphism $$\phi:=\psi \times  (u_1\ldots u_k, u_{k+1}\ldots u_r):\quad  \psi^{-1}(V)\,\, \to Z:=V\,\,\times \,\,D_A \,\,\times \,\,\bA^{r}\,\,.$$
The induced morphism $\widehat{\psi}^*_p:\widehat{\cO_{Z,y}} \to \widehat{\cO_{D,p}}$ sends $v_i$ to $u_i$. 
Moreover, by the construction $$\widehat{\cO_{Z,y}}\simeq K_y[[v_1,\ldots,v_n]]/(v_1\cdot\ldots \cdot v_k),$$
where $K_y$ is the residue field of $Z$ at $y=\phi(p)$.
Similarly   $$\widehat{\cO_{D,p}}\simeq K_p[[u_1,\ldots,u_n]]/(u_1\cdot\ldots \cdot u_k).$$ Moreover since  $\phi_1$ is a restriction of $\phi$ the residue field $K_p$ is a finite (separable extension ) of $K_y$.
Thus  $\widehat{\cO_{Z,y}}\otimes_{K_y}K_p\simeq  K_p[[v_1,\ldots,v_n]]/(v_1\cdot\ldots \cdot v_k)$, and $\widehat{\psi}^*_p$ induces the isomorphism the complete local rings :
$$\widehat{\cO_{Z,y}}\otimes_{K_y}K_p \to \widehat{\cO_{D,p}},$$
that is 
 $$K_p[[v_1,\ldots,v_n]]/(v_1\cdot\ldots \cdot v_k)\to K_p[[u_1,\ldots,u_n]]/(u_1\cdot\ldots \cdot u_k),\quad  v_i\mapsto u_i.$$

Then $\phi: \psi^{-1}(V)\to V\times D^A\times \bA^{r}$ is \'etale and $\psi^{-1}(V)\to V$ is SNC in a neighborhood of $p$.

\end{proof}

\section{\'Etale trivializations of strictly SNC morphisms}
Recall a well known fact of trivialization of \'etale proper morphisms:

\begin{lemma} Let $\phi: X\to Y$ be a finite \'etale  morphism of smooth varieties. Then there exists a finite \'etale morphism $\overline{Y}\to Y$ of smooth varieties such that the fiber product $\overline{X}:=\overline{Y}\times_YX$ is  a finite disjoint union of the varieties $\overline{Y}_i$ isomorphic to $\overline{Y}$, and the  
the induced morphism  $$\overline{X}=\bigcup \overline{Y}_i\to \overline{Y}$$  is  an isomorphism on each component. 
\end{lemma}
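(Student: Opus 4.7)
The plan is to reduce to the Galois case by taking a suitable Galois closure and then trivializing via the standard self-trivialization of $G$-torsors. First I would assume $Y$ is connected: otherwise I work on each connected component of $Y$ independently and assemble the covers by disjoint union at the end. With $Y$ connected, the finite \'etale morphism $\phi: X \to Y$ decomposes $X$ as a finite disjoint union $X = X_1 \sqcup \cdots \sqcup X_m$ of connected components, each of which is itself finite \'etale over $Y$.

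Next I would construct a common Galois closure. By Grothendieck's theory of the \'etale fundamental group (SGA 1), there exists a connected finite \'etale Galois cover $\overline{Y} \to Y$, with group $G = \Aut(\overline{Y}/Y)$, which factors through every $X_i \to Y$. Concretely one may take $\overline{Y}$ to be the normalization of $Y$ in the Galois closure of the compositum of the residue field extensions $K(X_i)/K(Y)$, or equivalently a connected component of a sufficiently large iterated fiber product $X_1 \times_Y \cdots \times_Y X_m \times_Y \cdots$. Because $\overline{Y} \to Y$ is \'etale and $Y$ is smooth, $\overline{Y}$ is automatically smooth.

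For each $i$, the cover $X_i$ corresponds to a subgroup $H_i \leq G$, i.e.\ $X_i = \overline{Y}/H_i$ as $Y$-schemes. The standard self-trivialization of Galois covers then gives
$$\overline{Y} \times_Y X_i \;\cong\; \bigsqcup_{gH_i \in G/H_i} \overline{Y},$$
where each component is carried isomorphically to $\overline{Y}$ by the action of a coset representative. Taking the disjoint union over $i$ yields the desired presentation
$$\overline{X} = \overline{Y} \times_Y X \cong \bigsqcup_i \overline{Y}_i,$$
with each $\overline{Y}_i \to \overline{Y}$ an isomorphism.

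The only genuine technical input is the existence of a connected Galois closure $\overline{Y} \to Y$ dominating all the $X_i$ simultaneously; this is standard in the \'etale setting, and once it is in hand the decomposition above is purely formal. The smoothness of $\overline{Y}$ is automatic from the smoothness of $Y$ together with the \'etale property, so no further geometric work is required.
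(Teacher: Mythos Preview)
Your argument is correct, but it proceeds along a genuinely different line from the paper's. The paper gives an elementary induction on the degree $d$ of $\phi$: it base-changes along $\phi$ itself, setting $Y_1 := X$ and observing that the diagonal $X \hookrightarrow X \times_Y X$ splits off a component of $X_1 := Y_1 \times_Y X$ on which the projection to $Y_1$ is an isomorphism; the complementary part $X'_1 \to Y_1$ then has degree at most $d-1$, and one repeats. No Galois theory or \'etale fundamental group enters.

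Your approach, by contrast, imports the structure theory of finite \'etale covers from SGA~1: you pass directly to a connected Galois closure $\overline{Y}\to Y$ dominating every connected component $X_i$ of $X$, and then invoke the standard self-trivialization $\overline{Y}\times_Y (\overline{Y}/H_i)\cong \coprod_{G/H_i}\overline{Y}$. This is cleaner conceptually and identifies $\overline{Y}$ explicitly as a Galois closure, but it presupposes the Galois correspondence for \'etale covers (or, equivalently, enough of the \'etale fundamental group formalism to guarantee the existence of a common Galois envelope). The paper's inductive diagonal-splitting argument is more self-contained and avoids that machinery entirely, at the cost of being less transparent about what $\overline{Y}$ actually is; in fact its iterated self-fiber-product is exactly one concrete way of building the Galois closure you invoke abstractly.
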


\begin{proof} Let $d$ be the degree of finite morphism. We shall give a proof by induction on $d$. Consider first the extension  $Y_1:=X$  and  induced morphism $\pi_1: X_1=Y_1\times_YX\to Y_1$. There is  natural finite morphism $i: X\to X_1=X\times_YX$, such that $\pi\circ i=id_X$. Thus $i(X)$ is a closed subvariety of $X_1$ of the same dimension, which means that $i(X)$ is an irreducible and thus connected component of a smooth subscheme. In other words $X_1$ is a disjoint union $X_1=i(Y_1)\cup X'_1$, where degree $\pi'_1: X'_1\to Y_1$ is $\leq d-1$. By the inductive assumption one can find the desired extension $\overline{Y}\to Y_1$ by successive repetition of this construction which then trivializes $\pi'_1: X'_1\to Y_1$ and consequently $\pi_1: X_1\to Y_1$.

\end{proof}
\begin{definition}
A mophism $f:X\to Y$ will be called {\it strictly smooth} if it is proper and 
can be factored as $f:X\to X'\to Y$ where $g:X\to X'$ is smooth with connected fibers and $X'\to Y$ is \'etale.
A morphism $D\to Y$ will be called a {\it strictly SNC} if it is SNC and each induced morphism $f: D_\alpha\to Y$ is strictly smooth.
\end{definition}

\begin{corollary}\label{34} \begin{enumerate} 
\item Let $\phi: X\to Y$ be a  proper  morphism of smooth varieties. Then there exists a (canonical) nonempty open subset $U\subset Y$ such that $\phi^{-1}(U)\to U$ is strictly smooth.

\item Let $D$ be an SNC variety over ground field $K$ and $f: D\to Y$ be a proper 
surjective morphism. Then there exists a (canonical) open subset $U\subset Y$ such that $f^{-1}(U)\to U$ is strictly SNC.
\end{enumerate}
\end{corollary}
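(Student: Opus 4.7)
The plan is to prove (1) by combining generic smoothness with a Stein factorization argument, and then to derive (2) by applying (1) to each intersection stratum of the SNC variety $D$.

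For (1), first invoke generic smoothness to obtain a canonical open $U_1\subset Y$ over which $\phi$ is smooth. On $\phi^{-1}(U_1)$ form the Stein factorization
$$\phi^{-1}(U_1)\xrightarrow{g} X' \xrightarrow{h} U_1,\qquad X'=\Spec(\phi_{*}\mathcal{O}_{\phi^{-1}(U_1)}),$$
in which $g$ is proper with geometrically connected fibers and $h$ is finite. Because $\phi|_{\phi^{-1}(U_1)}$ is smooth, its fibers are geometrically reduced, hence $h$ has geometrically reduced fibers; since $U_1$ is smooth the étale locus of $h$ is a canonical dense open subscheme. Shrinking to its image yields a canonical open $U\subset U_1$ on which $h$ is étale. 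Then $\phi|_{\phi^{-1}(U)}=h|_U\circ g|_{\phi^{-1}(U)}$ is smooth and $h|_U$ is étale, which by descent of smoothness along étale morphisms forces $g|_{\phi^{-1}(U)}$ to be smooth; its fibers remain connected by construction. This exhibits $\phi^{-1}(U)\to U$ as strictly smooth.

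For (2), apply the generic SNC theorem to obtain a canonical open $V\subset Y$ over which $f$ is SNC. By the local description of an SNC morphism the variety $D|_{f^{-1}(V)}$ has only finitely many intersection strata $D_\alpha$, each smooth, and $f$ restricts to a proper morphism $f_\alpha\colon D_\alpha \cap f^{-1}(V)\to V$ of smooth varieties. Apply (1) to each $f_\alpha$ to obtain a canonical open $U_\alpha\subset V$ on which $f_\alpha$ is strictly smooth, and set $U:=\bigcap_\alpha U_\alpha$, which remains open since the indexing set is finite. Over $U$ the morphism $f$ is still SNC and every stratum morphism is strictly smooth, which is precisely the definition of strictly SNC.

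Canonicity holds at every step: the smooth locus of a morphism, the étale locus of a finite morphism between smooth varieties, the Stein factorization, and the generic SNC locus are each canonical, and finite intersections of canonical opens remain canonical. The main technical point I expect to scrutinize is confirming that the Stein factor $g$ is smooth (not merely proper with connected fibers); this reduces to the fact that if $\phi=h\circ g$ with $\phi$ smooth and $h$ étale, then the relative cotangent sequence together with $\Omega^1_{X'/U}=0$ forces $\Omega^1_{\phi^{-1}(U)/X'}$ to be locally free of the correct rank, yielding smoothness of $g$. This is the one place where the generic smoothness hypothesis on $\phi$ must be explicitly paired with the étaleness obtained after the second shrinking.
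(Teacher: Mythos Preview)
Your proposal is correct and follows essentially the same approach as the paper: for (1) you combine Stein factorization with generic smoothness (the paper factors first and then shrinks, you shrink first and then factor, but the content is identical), and for (2) you combine the generic SNC theorem with (1) applied to each intersection stratum, exactly as the paper does. The extra care you take in verifying smoothness of the Stein factor $g$ via the cotangent sequence is more detail than the paper provides, but not a different idea.
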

\begin{proof} 
(1) Consider Stein factorization  $ X\to Z\to Y$ of $\phi$, where $\psi_1: Z\to Y$ is finite  and $\psi_2: X\to Z$ has connected fibers. By  generic smoothness there exits $U\subset Y$ such that $\psi_1^(U)\to U$ is \'etale and $\psi_2^{-1}\psi_1^(U)\to \psi_1^(U)$ is smooth.
(2) Follows from (1) and from generic SNC.
\end{proof}

\begin{corollary} Let $X\to Y$ be strictly smooth morphism.
 Then there exists an  \'etale  extension $\overline{Y}\to Y$ such that $\overline{X}:=X\times_Y\overline{Y}\to \overline{Y}$ splits into a disjoint union of the irreducible components $\overline{X}=\bigcup \overline{X}_i$, where each morphism $\phi: \overline{X}_i\to \overline{Y} $ is  smooth with connected fibers.
\end{corollary}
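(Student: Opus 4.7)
The plan is to reduce the statement directly to the previous trivialization lemma for finite \'etale morphisms, transferring the trivialization across the smooth factor by base change. By the definition of a strictly smooth morphism, $X\to Y$ factors as
\[
X\buildrel{g}\over\longrightarrow X'\buildrel{h}\over\longrightarrow Y,
\]
with $g$ smooth with connected fibers and $h$ \'etale. Since $X\to Y$ is proper, $h$ is proper and \'etale, hence finite \'etale.

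I would then apply the preceding Lemma to the finite \'etale morphism $h:X'\to Y$. This yields a finite \'etale extension $\overline{Y}\to Y$ of smooth varieties such that
\[
\overline{Y}\times_Y X'\;=\;\bigsqcup_i \overline{Y}_i,
\]
where each projection $\overline{Y}_i\to\overline{Y}$ is an isomorphism. Base-changing $X$ along $\overline{Y}\to Y$ and using the associativity of fiber products, I get
\[
\overline{X}\;=\;\overline{Y}\times_Y X\;=\;\bigl(\overline{Y}\times_Y X'\bigr)\times_{X'}X\;=\;\Bigl(\bigsqcup_i\overline{Y}_i\Bigr)\times_{X'}X\;=\;\bigsqcup_i\bigl(\overline{Y}_i\times_{X'}X\bigr).
\]
Setting $\overline{X}_i:=\overline{Y}_i\times_{X'}X$ gives the desired decomposition into open and closed pieces of $\overline{X}$.

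Finally, each $\overline{X}_i\to\overline{Y}_i$ is obtained as the base change of $g:X\to X'$ along the morphism $\overline{Y}_i\to X'$; smoothness is preserved under base change, and the geometric fiber of $\overline{X}_i\to\overline{Y}_i$ at a point of $\overline{Y}_i$ is identified with the corresponding geometric fiber of $g$, which is connected by hypothesis. Composing with the isomorphism $\overline{Y}_i\simeq\overline{Y}$ realises each $\overline{X}_i\to\overline{Y}$ as a smooth morphism with connected fibers. Since smoothness over a smooth base keeps the total space smooth, each $\overline{X}_i$ is smooth, and connected fibers over each connected component of $\overline{Y}$ guarantee that the pieces $\overline{X}_i$ are precisely the connected (hence irreducible, on each component of $\overline{Y}$) components of $\overline{X}$.

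There is essentially no obstacle here: the only content is the identification of the base change of a fiber product, and the stability of ``smooth with connected fibers'' under arbitrary base change, both of which are formal. The nontrivial ingredient has already been done in the preceding Lemma, which handles the finite \'etale case.
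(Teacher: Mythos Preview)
Your proof is correct and follows essentially the same route as the paper: take the strict-smooth factorization $X\to X'\to Y$, apply the preceding trivialization lemma to the \'etale part to get $\overline{Y}\to Y$, and then base change the smooth-with-connected-fibers map $g$ over the resulting trivial cover. Your explicit fiber-product identification $\overline{X}=(\overline{Y}\times_Y X')\times_{X'}X=\bigsqcup_i(\overline{Y}_i\times_{X'}X)$ is exactly what the paper's terser argument is doing, and your added remark that properness of $X\to Y$ together with surjectivity of $g$ forces $h$ to be proper (hence finite) \'etale is a useful clarification that the paper leaves implicit.
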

\begin{proof}

Let $ X\to Z\to Y$ be a factorization into a smooth morphism with connected fibers and \'etale morphism.

Consider the \'etale extension $\overline{Y}\to Y$ defined for $Z\to Y$ and inducing  a trivial cover $\overline{Z}:=Z\times_Y\overline{Y}\to \overline{Y}$. This implies that
 $\overline{Z}$ is a union of irreducible components isomorphic to $\overline{Y}$.
Thus the induced morphism $\overline{X}:=X\times_Y\overline{Y}\to \overline{Z}$ is smooth and has connected fibers. Consequently $\overline{X}$ is a union of disjoint components which are smooth over $\overline{Y}$.
\end{proof}

\begin{corollary} \label{33} Let   $f: D=\sum D_i\to Y$ be  a  strictly SNC  morphism  of dimension $n$ to a smooth variety $Y$. Then there exists  a finite \'etale morphism $\overline{Y}\to Y$ such that  

\begin{enumerate}
\item The induced morphism $\overline{f}: \overline{D}:=D\times_Y\overline{Y}\to \overline{Y}$ is SNC with $\overline{D}=\bigcup \overline{D}_{j}$ being a union of irreducible components.
\item For any irreducible component	of $\overline{D}_{\alpha}=D^s_{i_1,\ldots,i_k}$, where $\alpha:=\{i_1,\ldots,i_k;s\}$ of the intersection  of $\overline{D}_{i}$ the  restriction $\overline{f}_{|\overline{D}_{\alpha}}: \overline{D}_\alpha\to \overline{Y}$ is smooth proper with irreducible fibers $\overline{D}_{\alpha x}:=\overline{f}_{|\overline{D}_{\alpha}}^{-1}(x)$.
\item Any fiber $\overline{D}_x:=\overline{f}^{-1}(x)$  of $x\in \overline{Y}$ under $\overline{f}$ is a SNC variety with maximal components $\overline{D}_{j x}$  which are intersections of the maximal components  $\overline{D}_{j}$ and $\overline{f}^{-1}(x)$.
\item There exists a bijective correspondence between the intersection components  $\overline{D}_\alpha$ of $\overline{D}$ and the intersection components of  $\overline{D}_{\alpha x}=\overline{D}_\alpha\cap \overline{f}^{-1}(x)$. In particular $\Delta({\overline{D}})=\Delta(\overline{D}_x)$.
\item  For any $x, y\in Y$, $\Delta(D_x)=\Delta(D_y)$.
\end{enumerate}

\end{corollary}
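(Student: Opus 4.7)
My plan is to build $\overline{Y}$ by simultaneously trivializing the finite \'etale parts appearing in the Stein factorizations of all stratum maps, and then read off (1)--(5) from the resulting geometry.

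By the definition of strictly SNC, each stratum map $f_\alpha : \overline{D}_\alpha \to Y$ (for $\alpha$ ranging over the finitely many irreducible components of intersections $D_{i_1}\cap\ldots\cap D_{i_k}$) factors through its Stein factorization $D_\alpha \to Z_\alpha \to Y$ with $Z_\alpha \to Y$ finite \'etale. Applying the \'etale trivialization lemma at the beginning of this section successively (or once to the fibered product $\prod_Y Z_\alpha$), I obtain a single finite \'etale cover $\overline{Y}\to Y$ splitting every $Z_\alpha \times_Y \overline{Y}$ into a disjoint union of copies of $\overline{Y}$. Base changing, $\overline{f} : \overline{D} := D \times_Y \overline{Y} \to \overline{Y}$ inherits the SNC property, since \'etale base change preserves the local model $D_A \times Y \to Y$ from the definition of SNC morphism; this yields (1). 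For each stratum, $D_\alpha \times_Y \overline{Y}$ becomes a disjoint union of irreducible pieces, each smooth over $\overline{Y}$ with connected (irreducible) fibers. Relabelling these pieces as the intersection components of $\overline{D}$ gives (2).

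Items (3) and (4) follow by restricting this structure to a fiber $\overline{D}_x$, $x\in \overline{Y}$: the \'etale-local model $D_A \times \overline{Y}$ restricts to $D_A \times \{x\}\subset \bA^{n+1}$, so $\overline{D}_x$ is SNC with maximal components $\overline{D}_{jx}$; and since each $\overline{D}_\alpha \to \overline{Y}$ has irreducible fibers, the intersection components $\overline{D}_{\alpha x}$ of $\overline{D}_x$ are irreducible and in canonical bijection with the strata $\overline{D}_\alpha$ of $\overline{D}$, with identical incidence relations. Hence $\Delta(\overline{D}) = \Delta(\overline{D}_x)$ for every $x\in \overline{Y}$. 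For (5), given $x,y\in Y$, choose lifts $\bar x, \bar y \in \overline{Y}$ (possible since $\overline{Y}\to Y$ is surjective); then $\Delta(\overline{D}_{\bar x}) = \Delta(\overline{D}) = \Delta(\overline{D}_{\bar y})$ by (4), and $\overline{D}_{\bar x}$ is the base change of $D_x$ by the finite separable extension $k(\bar x)/k(x)$ (similarly for $\bar y$), which induces the required isomorphism $\Delta(D_x) = \Delta(D_y)$.

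The main obstacle I expect is the descent in (5): one must show that no irreducible component of $D_x$ splits further upon base changing to $k(\bar x)$, so that the natural map $\Delta(D_x) \to \Delta(\overline{D}_{\bar x})$ is an isomorphism rather than a strict refinement. This can be arranged by enlarging $\overline{Y}$ to a Galois \'etale cover (still finite \'etale) that trivializes the monodromy of every $Z_\alpha \to Y$; after this refinement, every stratum fiber is geometrically irreducible, and the combinatorial type of $\Delta(D_x)$ is invariant under the residue field extensions, reducing (5) to the bijection of strata already obtained in (4).
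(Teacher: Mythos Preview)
Your approach matches the paper's: take the finite \'etale pieces $Z_\alpha$ from the strictly-smooth factorizations of the $f_\alpha:D_\alpha\to Y$, trivialize their fibered product over $Y$ via the earlier lemma to obtain $\overline{Y}\to Y$, and then read off (1)--(4) from the fact that each $\overline{D}_\alpha$ splits into components smooth over $\overline{Y}$ with connected (hence irreducible) fibers. The paper's treatment of (5) is equally terse and does not address the descent issue you flag---it simply passes from the bijection $\overline{D}_\alpha\leftrightarrow\overline{D}_{\alpha x}$ to the equality of fiber complexes without comment---so your caution there is well placed, though in the intended applications one works over closed points of an algebraically closed ground field where $D_x\simeq\overline{D}_{\bar x}$ and the issue does not arise.
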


\begin{proof} 
Consider  a strictly smooth morphism from an intersection component $f_\alpha: D_\alpha\to Y$ induced by $f$ 
and its  factorization
$D_\alpha
\buildrel{g_\alpha}\over\to \Z_\alpha\buildrel{h_\alpha}\over\to Y$ into a smooth $g_\alpha$ and  \'etale morphism $Z_\alpha\to Y$.
Let $Z\to Y$ be the component which dominates  $Y$ in the product $\prod_U \widetilde{Z}_\alpha$ over $U$. Then the morphism  $Z\to Y$ is finite and \'etale.
Consider the extension $\overline{Y}\to {Y}$ for the morphism $Z\to U$ such that $\overline{Z}:=Z\times_Y\overline{Y}\to \overline{Y}$ is a trivial cover.

The  factorization $Z\to {Z}_\alpha\to Y$ determines the factorization of the trivial cover $\overline{Z}\to \overline{Z_\alpha}:=\widetilde{Z}_\alpha\times_Y\overline{Y} \to \overline{Y}$.
Thus all $\overline{Z_\alpha}=\bigcup \overline{Z}_{\alpha i}\to \overline{Y}$ are trivial covers over $\overline{Y}$. Since  the morphism $\overline{D}_\alpha:=D_\alpha\times_Y\overline{Y}
\to \overline{Z}_\alpha$ is smooth with connected fibers, the variety $\overline{D}_\alpha$ splits into components  $D_{\alpha i}$ which are  smooth over $\overline{Y}_{\alpha i}$ with connected fibers.  In other words all the connected components of $\overline{D}_\alpha$ are smooth and have connected,and  in fact,  irreducible fibers over $\overline{Y}$.

Each fiber $f^{-1}(x)$ is an SNC variety. It is a union of varieties which are intersections of the fiber and irreducible divisor  $D_{ix}=D_i\cap f^{-1}(x)$. The components intersections of these $D_{\alpha x}$ are exactly
the intersections of components of $D_{\alpha}$ with $f^{-1}(x)$. This shows that there is a bijective correspondence between the $D_{\alpha}$ and $D_{\alpha x}$ as well as their intersections proving that the dual complexes of both are the same.
In particular, the dual complexes of the fibers are the same.

\end{proof}

\section{Desingularization of morphisms and equisingular Hironaka desingularization}
\begin{theorem} {\bf Desingularization of the fibers of morphism}
 Let $\pi: X\to Y$ be any morphism over a field $K$, of characteristic zero.
There exists a canonical resolution, that is a nonsingular variety  $\overline{X}$ and a projective morphism $\phi: \overline{X}\to X$ such that the composition morphism $\overline{X}\to Y$ has all SNC  fibers. Moreover

\begin{enumerate}
\item Let $U\subset Y$ be a maximal open nonsingular subset such that $V:=\pi^{-1}(U)\to U$ is smooth.
Then
$\phi$ is an isomorphism over $V$. 
\item The exceptional locus of $\phi$ is a  SNC divisor $D$.
\item The fibers of $\overline{\psi}$ are SNC varieties and  they have SNC with $D$.
\item There exist a nonsingular locally closed (smooth) stratification $S$ of $Y$ with  the generic stratum  $U$ and such that for any nongeneric stratum $s\in S$ the set $\phi^{-1}(s)$ is a SNC divisor contained in $D$ and the morphism $\phi^{-1}(s)\to s$ is SNC.
\item If $\phi$ is proper then  the combinatorial type of the fibers $\Delta({f^{-1}(x)})$ is the same for all points in the stratum $x\in s$.
\item For each stratum there is an \'etale morphism $\overline{s}\to s$, such that the combinatorial type $\Delta_s$ of the SNC variety $\overline{D}_s=D_s\times_s\overline{s}$ is equal to  $\Delta({f^{-1}(x)})$. 
\end{enumerate}
\end{theorem}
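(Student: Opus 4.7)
My plan is to combine the Hironaka theorems of Section~1 with the generic SNC theorem of Section~4 and the \'etale trivialization results of Section~5, and to organize the argument by noetherian induction on $\dim Y$. The guiding principle will be: at each stage, blow up only in centers lying over the locus where the morphism is not yet SNC on fibers, so that the good behavior already achieved over the complement is never disturbed. This is possible because Hironaka's extended embedded desingularization constrains its centers to lie inside the strict transforms of the prescribed subschemes.

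The first step will be to apply strong Hironaka resolution to $X$ and then to principalize the ideal sheaf of the preimage of $Y\setminus U$, producing $\phi_0\colon X_0\to X$ with $X_0$ smooth, $\phi_0$ an isomorphism over $V=\pi^{-1}(U)$, and exceptional locus an SNC divisor $E_0$. Setting $\pi_0:=\pi\circ\phi_0$, I will next apply the generic SNC theorem of Section~4 to $E_0\to Y$ and to each of its intersection components, obtaining an open $U_1\supseteq U$ over which $\pi_0$ is smooth, the morphism $E_0\to Y$ is SNC, and the fibers of $\pi_0$ are SNC varieties having SNC with $E_0$. The closed complement $Z_1:=Y\setminus U_1$ has strictly smaller dimension than $Y$, so the inductive hypothesis applies to the restricted morphism $\pi_0^{-1}(Z_1)\to Z_1$. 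I will perform the resolution there by applying extended embedded Hironaka to $\pi_0^{-1}(Z_1)\subset X_0$ compatibly with $E_0$; all centers produced lie above $Z_1$, hence nothing changes over $U_1$, and in particular the SNC fiber condition over $U_1$ is preserved. Iterating and using noetherianity yields $\overline{X}\to X$ satisfying (1)--(3).

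The stratification $S$ of~(4) will be the one dictated by the induction: the open stratum is $U$, and each successive stratum is a smooth locally closed piece of one of the differences $U_{i+1}\setminus U_i$ obtained by further Hironaka resolution (again only involving centers over loci of smaller dimension). For (5) and (6), the restriction $\phi^{-1}(s)\to s$ will be a proper SNC morphism by construction, so Corollary~\ref{34} provides an \'etale cover $\overline{s}\to s$ over which, via Corollary~\ref{33}, the pulled-back SNC variety $\overline{D}_s:=D_s\times_s\overline{s}$ has a dual complex equal to that of every geometric fiber; constancy of the combinatorial type on $s$ and the identification $\Delta_s=\Delta(f^{-1}(x))$ then follow from Corollary~\ref{33}(4)--(5).

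\textbf{Main obstacle.} The delicate point, at every inductive stage, will be to guarantee that the blow-ups improving the morphism over the current bad locus $Z_i$ do not spoil the SNC \emph{fiber} structure already secured over $Y\setminus Z_i$, and in particular that fibers are SNC \emph{with} the ambient exceptional divisor $E_i$ rather than merely SNC as abstract varieties. This is precisely why the generic SNC theorem of Section~4, rather than ordinary generic smoothness, is needed, and why the centers must be drawn from inside the strict transforms of prescribed subschemes as in the extended embedded Hironaka setup. The interplay between the functoriality of Hironaka, the constraint on centers, and the ``SNC with divisor'' conclusion of generic SNC is the technical heart of the argument.
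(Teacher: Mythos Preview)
Your proposal is correct and follows essentially the same strategy as the paper: resolve $X$, then iteratively principalize the preimage of the current ``bad'' locus in $Y$, using generic SNC at each stage to enlarge the good open set, with termination by noetherianity. The paper phrases this as a direct descending chain $Y\supset Y_1\supset Y_2\supset\cdots$ rather than noetherian induction on $\dim Y$, and at each step uses canonical \emph{principalization} of the ideal of $\pi_i^{-1}(Y_{i+1})$ on $(X_i,D_0\cup\cdots\cup D_i)$ rather than extended embedded desingularization; but both tools have their centers contained in the prescribed locus and produce an SNC divisor compatible with the ambient one, so the difference is cosmetic. Your identification of the key obstacle (centers over $Z_i$ never touch $\pi^{-1}(U_i)$) and your appeal to Corollaries~\ref{34} and~\ref{33} for parts (5)--(6) match the paper exactly.

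One small wording caution: when you say ``the inductive hypothesis applies to the restricted morphism $\pi_0^{-1}(Z_1)\to Z_1$'' you do not actually invoke the theorem for that morphism in isolation (which would lose the ambient $X_0$ and $E_0$); you immediately, and correctly, work ambient-embedded instead. So your ``noetherian induction'' is really the same iteration the paper runs, and you should present it that way to avoid suggesting a black-box use of the inductive statement.
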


\begin{proof} Let $\phi_0: X_0\to X$ be the Hironaka desingularization  of $X$ with SNC divisor $D_0$ and  denote by $\pi_0: X_0\to Y$  the induced morphism. By the assumption $\phi_0:X_0\to X$ is smooth over $\pi^{-1}(U)$. 
By generic smoothness there is a maximal open subset $U_1\subset Y$ (containing $U$) such that  $\pi_0^{-1}(U_1)\to U_1$ is smooth. Consider a  canonical Hironaka principalization  $\phi_1: X_1\to X_0$ of the ideal of the complement $Z_1:=X_0\setminus \pi_0^{-1}(U_1)$ on $(X_0,D_0)$. 

 Then  $D_1:=\phi_1^{-1}(Z_1)$ is a SNC divisor on $X_1$ with the induced morphism $\pi_1: X_1\to Y$. Set $Y_1:=Y\setminus U_1$ and consider the induced morphism $D_1\to Y_1$.
 Find a maximal nonsingular open subset $U_{1}\subset Y_{1}$ such that the morphism $\pi_1^{-1}(U_{1})\to U_{1}$ is SNC.  Set $Y_2:=Y_1\setminus U_{1}$, and take Hironaka principalization of  $Z_2:=\pi_1^{-1}(Y_2)$ on $(X_1,D_0\cup D_1)$.
 
 We continue this process until $Y_{k+1}=\emptyset$.  We obtain successive varieties $X_i$ with the exceptional SNC divisors $D_0\cup\ldots\cup D_i$ projections $\phi_i:X_i\to Y$, defining the restrictions $\pi_i: D_i\to Y_i$  and open subsets $U_i\subset Y_i$ for which $\pi_i^{-1}(U_{i})$ is SNC over $U_{i}$. We define stratification $S$ on $Y$ by taking irredducible components of the locally closed subsets $U_i\subset Y_i\subset Y$.
. 

\end{proof}
\bigskip
Applying the theorem to the identical morphism yields the corollary

\begin{theorem} {\bf Hironaka's equisingular  desingularization with SNC fibers}  Let $X$ be any algebraic variety over a field $K$, of characteristic zero.
There exists a canonical resolution, that is a nonsingular variety  $\overline{X}$ such that  $\overline{X}\to X$ has all SNC  fibers. Moreover

\begin{enumerate}
\item Let $U\subset X$ be the  open  subset of nonsingular points on $X$. 

\item The exceptional locus of $\phi$ is a  SNC divisor $D$.
\item The fibers of $\overline{\psi}$ are SNC varieties and  they have SNC with $D$.
\item There exist a nonsingular locally closed smooth stratification $S$ of $Y$ such that  the generic stratum is $U$ and for any nongeneric stratum $s\in S$ the set $\phi^{-1}(s)$ is a SNC divisor contained in $D$ and the morphism $\phi^{-1}(s)\to s$ is SNC.
\item The combinatorial type of the fibers is the same for all points in the stratum .
\end{enumerate}
\end{theorem}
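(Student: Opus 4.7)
The plan is to deduce this theorem as an immediate corollary of the previous theorem on desingularization of morphisms, by applying it to the identity morphism $\pi = \id_X : X \to X$. So first I would take $Y = X$ and $\pi = \id_X$, and invoke the previous theorem to obtain a projective resolution $\phi : \overline{X} \to X$ such that the composition $\overline{X} \to X$ (which is $\phi$ itself) has SNC fibers, together with a canonical stratification $S$ of the target $Y = X$.

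Next I would verify that the conclusions (1)--(5) match. The maximal open subset $U \subset Y$ on which $\pi^{-1}(U) \to U$ is smooth, in the case $\pi = \id_X$, is precisely $\pi^{-1}(U) = U$, and this coincides with the regular locus $\Reg(X)$; this gives (1). Conclusion (2) is inherited verbatim: the exceptional locus of $\phi$ is a SNC divisor $D$. Conclusion (3) is also inherited directly. For the stratification in (4), the construction of strata in the proof of the previous theorem proceeds by removing successively the loci of non-smoothness / non-SNC-fiber behavior, giving a locally closed smooth stratification $S$ of $Y = X$; here the generic stratum is $\Reg(X)$ and for each non-generic stratum $s$, the preimage $\phi^{-1}(s)$ is a SNC divisor contained in $D$ with $\phi^{-1}(s)\to s$ being SNC. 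Conclusion (5) follows from item (5) of the previous theorem once we observe that $\phi$ is proper (being a composition of blow-ups with smooth centers, hence projective) and from Corollary \ref{33} which ensures that on each stratum the dual complex of the fiber is constant up to \'etale base change.

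The only genuinely new point to check is that the construction transports well: since the previous theorem is built by iterated canonical Hironaka principalizations applied to complements of strata on which the relevant morphism fails to be SNC, and since in our case the morphism is the identity, the stratification arising on $Y = X$ can equivalently be described as the sequence of non-smooth, then non-SNC-fiber loci intrinsic to $X$, and the resolution $\phi$ is canonical and equivariant. The canonicity and functoriality follow directly from the corresponding properties of the Hironaka resolution and principalization theorems used in the proof of the preceding theorem.

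I do not anticipate any serious obstacle; the main thing to be careful about is ensuring that the stratification inherited from the ambient ``$Y$'' now genuinely lives on $X$ itself (which is automatic since $Y = X$), and that the combinatorial type invariance on strata, proven via the \'etale trivialization of Corollary \ref{33}, carries over without change. No new desingularization construction is needed; the content is entirely a specialization of the preceding theorem.
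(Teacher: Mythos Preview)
Your proposal is correct and matches the paper's approach exactly: the paper simply states that applying the preceding desingularization-of-morphisms theorem to the identity morphism $\id_X: X\to X$ yields this corollary, with no further argument. Your additional verification that conclusions (1)--(5) line up under the specialization $Y=X$, $\pi=\id_X$ is more detailed than the paper's one-line justification, but the underlying idea is identical.
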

\qed
\bigskip

\begin{theorem} {\bf  Hironaka's Canonical Principalization with SNC fibers} Let ${\cI}$ be a sheaf of ideals on a smooth algebraic variety $X$, and  $X\to Y$ be a proper morphism of varieties.
There exists a principalization of ${\cI}$  that is, a sequence

$$ X=X_0 \buildrel \sigma_1 \over\longleftarrow X_1
\buildrel \sigma_2 \over\longleftarrow X_2\longleftarrow\ldots
\longleftarrow X_i \longleftarrow\ldots \longleftarrow X_r =\widetilde{X}$$

of blow-ups $\sigma_i:X_{i-1}\leftarrow X_{i}$ of smooth centers $C_{i-1}\subset
 X_{i-1}$
such that

\begin{enumerate}

\item The exceptional divisor $E_i$ of the induced morphism $\sigma^i=\sigma_1\circ \ldots\circ\sigma_i:X_i\to X$ has only  simple normal
crossings and $C_i$ has simple normal crossings with $E_i$.

\item The
total transform $\sigma^{r*}({\cI})$ is the ideal of a simple normal
crossing divisor
$\widetilde{E}$ which is  a natural  combination of the irreducible components of the divisor ${E_r}$.
\item The fibers of $\overline{\psi}$ are SNC varieties and  they have SNC with $D$.
\item There exist a nonsingular locally closed stratification $S$ of $Y$ such that  the generic stratum is $U$ and for any nongeneric stratum $s\in S$ the set $\phi^{-1}(s)$ is a SNC divisor contained in $\widetilde{E}$ and the morphism $\phi^{-1}(s)\to s$ is SNC.
\item The combinatorial type of the fibers is the same for all points in the stratum.

\end{enumerate}
The morphism $(\widetilde{X},\widetilde{\cI})\rightarrow(X,{\cI}) $ defined by the above principalization  commutes with smooth morphisms and  embeddings of ambient varieties. It is equivariant with respect to any group action not necessarily preserving the ground field $K$.

\end{theorem}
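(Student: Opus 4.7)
The plan is to combine the canonical principalization of $\cI$ (Theorem 1.3) with the iterative fiber-refinement procedure used in the preceding desingularization-of-morphism theorem, in that order. The only genuine novelty compared to the previous theorem is that the monomial form of the total transform of $\cI$ must survive all later modifications; this will follow because every subsequent center will be chosen inside the SNC divisor we have already built, so the additional blow-ups merely enlarge that divisor and multiply the monomial total transform by powers of new exceptional components.

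First I would apply the standard canonical principalization of $\cI$ on $X$ to obtain $\phi_0: X_0 \to X$ with an SNC exceptional divisor $D_0$ and the total transform of $\cI$ of the form $\widetilde{E}_0 \subseteq D_0$. This immediately settles conclusions (1) and (2), together with all functoriality, smooth-equivariance, and group-equivariance clauses, since each of those follows from the corresponding clauses of Theorem 1.3 applied to the composition of principalizations we shall carry out.

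Next I would run the same loop as in the previous theorem. By the generic SNC theorem of Section 4, there exists a maximal open $U_0 \subset Y$ such that $\pi_0^{-1}(U_0) \to U_0$ is SNC. Set $Y_1 := Y \setminus U_0$ and let $Z_1 := \pi_0^{-1}(Y_1) \subset X_0$. Apply the canonical Hironaka principalization of the ideal $\cI_{Z_1}$ on the pair $(X_0, D_0)$; all centers are contained in $Z_1$ (hence disjoint from $\pi_0^{-1}(U_0)$), have SNC with $D_0$, and the enlarged exceptional divisor $D_1 \supset D_0$ is SNC. Because the pull-back of $\widetilde{E}_0$ under blow-ups with centers contained in $D_0$ stays a monomial combination of components of the new $D_1$, condition (2) is preserved. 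Iterate this process on $Y_1$, producing nested closed subsets $Y \supset Y_1 \supset Y_2 \supset \cdots$ that strictly decrease in dimension and thus terminate. Define the stratification $S$ of $Y$ by taking irreducible components of each locally closed set $U_i \subset Y_i$, refined if necessary by canonically desingularizing the non-smooth loci of $Y_i$ to guarantee smoothness of each stratum.

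Over each stratum $s$, the restricted morphism $\phi^{-1}(s) \to s$ is SNC by construction, settling (3) and (4). For the constancy of combinatorial type in (5), I would invoke Corollary \ref{34} to pass, after possibly shrinking the stratum to a smaller open subset (and then refining $S$ accordingly), to a strictly SNC morphism, and then Corollary \ref{33} to produce an \'etale cover $\overline{s} \to s$ over which the family of dual complexes is literally constant and equal to $\Delta(\overline{D}_s)$; since $s$ is irreducible, $\Delta(\phi^{-1}(x))$ is then independent of $x \in s$. The main obstacle I expect is the careful bookkeeping needed to ensure that at every stage the blown-up centers really lie inside the current SNC divisor, so that both the SNC structure and the monomial shape of the total transform of $\cI$ are preserved; this amounts to checking that the ideal being principalized at each step cuts out a subscheme contained in the previously built exceptional divisor, which holds because $Z_i = \pi_{i-1}^{-1}(Y_i)$ is set-theoretically contained in $\pi_{i-1}^{-1}(Y_1) \subseteq D_1 \subseteq D_{i-1}$ after the very first non-generic step.
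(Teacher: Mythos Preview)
Your argument is essentially the same as the paper's: first canonically principalize $\cI$, then iteratively principalize the preimages of the successive closed complements $Y_i$ relative to the running SNC divisor, and take the stratification to be the irreducible components of the resulting locally closed sets $U_i$. The only notable difference is that the paper imposes the \emph{strictly} SNC condition (Corollary~\ref{34}) already when choosing each $U_i$ in the loop, so that Corollary~\ref{33} applies directly to every stratum without the post-hoc refinement you propose; either variant works. Your extra bookkeeping about centers lying inside the previously built divisor is harmless but not needed for (2): since each center has SNC with the current exceptional divisor, the pull-back of the monomial $\widetilde{E}_0$ is automatically again monomial in the enlarged divisor.
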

\begin{proof} Consider the canonical Hironaka principalization $\sigma_1: X_1\to X$ of  the ideal of $\cI$. We create a SNC divisor $D_1$ on $Z_1$  with induced morphism $\pi_1: D_1\to Y$. Then find a maximal open subset $U_1$ of $Y$ such that $\pi_1^{-1}(U_1)\to U_1$ is strictly SNC  Corollary \ref{33}. Set $Y_2=Y\setminus U_1$ , and consider the canonical Hironaka principalization $\sigma_2: Z_2\to Z_1$ of $\cI_{Y_2}$ on $(Z_1,D_1)$. Then we create the excptional divisor $D_2=\sigma^{-1}_2(Y_2)$ having SNC crossing with the strict transform of $D_1$ and  definining the  morphism  $\pi_2:D_2\to Y_2$, which is SNC over an open  nonsingular $U_2\subset Y_2$. Put $Y_3:=Y_2\setminus U_2$ principalize $Y_3$ on $(Z_2, D_1\cup D_2)$, and continue the process untill $Y_{k+1}=\emptyset$. Then put $\overline{Z}:=Z_k$ and let $\overline{\pi}: \overline{D}:=D_1\cup\ldots\cup D_k\to Y$  $\overline{\sigma}: \overline{X}\to X$ be the induced projections.

Consider the stratification $S$ is determined by an irredducible components of the locally closed subsets $U_i\subset Y_i\subset Y$.
Then  $\overline{\pi}^{-1}(s)\to s$ is SNC and the combinatorial type  $\Delta(\overline{\pi}^{-1}(x))$ is the same for all $x\in s$.

\end{proof}

\begin{theorem} {\bf Constructibility of the combinatorial type of fibers}
Let $f: X\to Y$ be a projective morphism of complex projective varieties. Then there is a locally closed stratification $S=\{s\}$ of $Y$ such that for any stratum $s$ the combinatorial type of fibers $\Sigma(f^{-1}(x))$  is constant for all points $x\in s$. In particular the combinatorial type of fibers is a constructible invariant.
\end{theorem}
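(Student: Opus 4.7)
The plan is to reduce the statement to the Desingularization of the Fibers of Morphism theorem just proven above. Apply that theorem to $f\colon X\to Y$ to obtain a projective birational morphism $\phi\colon \overline{X}\to X$ with $\overline{X}$ smooth such that the composition $\overline{f}:=f\circ\phi\colon \overline{X}\to Y$ has all fibers SNC, together with a locally closed nonsingular stratification $S$ of $Y$ such that for every stratum $s\in S$ the induced morphism $\overline{f}^{-1}(s)\to s$ is SNC in the sense of Section 4. Since $\phi$ is obtained from a sequence of blow-ups with smooth projective centers, it is projective, hence proper; in particular all fibers $\overline{X}_x:=\overline{f}^{-1}(x)$ are SNC varieties.

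For each stratum $s\in S$, Corollary \ref{34} provides an \'etale surjection $\overline{s}\to s$ over which $\overline{f}^{-1}(s)\to s$ becomes strictly SNC. Corollary \ref{33} then asserts that all fibers $\overline{X}_x$, for $x\in \overline{s}$, have isomorphic dual complexes $\Delta(\overline{X}_x)$; since the dual complex is invariant under a change of residue field, $\Delta(\overline{X}_x)$ is one and the same simplicial complex for all $x\in s$.

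It remains to identify this common dual complex with the combinatorial type $\Sigma(f^{-1}(x))$. To this end, embed $X\hookrightarrow W$ into a smooth projective ambient variety $W$ (possible because $X$ is projective) and lift the sequence of blow-ups defining $\phi$ to ambient modifications of $W$. Concretely, apply canonical Hironaka principalization on $W$ to the ideals of the centers used in the construction of $\phi$, together with the ideal of $X$, exploiting the functoriality of the Hironaka algorithm with respect to the closed embedding $X\hookrightarrow W$. This produces a proper birational morphism $\widetilde{W}\to W$ from a smooth variety $\widetilde{W}$ containing $\overline{X}$ as the proper transform of $X$, and for which the total transform of the ideal of $f^{-1}(x)\subset W$ is an SNC divisor $D^W_{f^{-1}(x)}$ obtained from $\overline{X}_x$ by the ambient blow-ups: the comparison of dual complexes along embeddings recorded in Lemma \ref{impo} and Corollary \ref{impo2} shows that $\Delta(\overline{X}_x)$ is a deformation retract of $\Delta(D^W_{f^{-1}(x)})$, and the additional ambient cone extensions of Proposition \ref{cone2} are homotopy equivalences. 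By Proposition \ref{impo3}, $\Sigma(f^{-1}(x))$ is the homotopy type of $\Delta(D^W_{f^{-1}(x)})$, and therefore coincides with the homotopy type of $\Delta(\overline{X}_x)$.

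Combining the two identifications, $\Sigma(f^{-1}(x))$ is constant on each stratum $s\in S$, which is the asserted constructibility. The main obstacle is the third step: producing a compatible ambient modification $\widetilde{W}\to W$ such that canonical principalization of $f^{-1}(x)\subset W$ yields a dual complex that retracts to $\Delta(\overline{X}_x)$. This compatibility ultimately rests on the canonicity and functoriality of Hironaka's algorithm under the closed embedding $X\hookrightarrow W$, along with the stratified behavior of cone extensions encoded in Lemma \ref{impo} and Corollary \ref{impo2}.
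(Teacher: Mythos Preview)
Your approach differs from the paper's and contains a genuine gap in the third step you yourself flag as the main obstacle.

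The paper does not desingularize $X$ at all. Instead it embeds $X$ via the graph of $f$ into the smooth ambient $\mathbb{P}^n\times\mathbb{P}^m$ and applies the \emph{Canonical Principalization with SNC fibers} theorem (the result immediately preceding this one) to the ideal $\mathcal{I}_X$ on $\mathbb{P}^n\times\mathbb{P}^m$, relative to the projection $\pi\colon\mathbb{P}^n\times\mathbb{P}^m\to\mathbb{P}^m$. This yields $\overline{\sigma}\colon\overline{Z}\to\mathbb{P}^n\times\mathbb{P}^m$ together with a stratification such that the fibers of $\overline{\pi}=\pi\circ\overline{\sigma}$ are SNC with constant dual complex on strata. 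Since all blow-up centers lie over $X$, one has $\overline{\sigma}^{-1}(f^{-1}(x))=\overline{\pi}^{-1}(x)$ for $x\in Y$. Proposition~\ref{impo3} now applies directly: $f^{-1}(x)$ sits in the smooth variety $\mathbb{P}^n\times\mathbb{P}^m$, and $\overline{\sigma}$ is a proper birational map from the smooth $\overline{Z}$ carrying $f^{-1}(x)$ to the SNC variety $\overline{\pi}^{-1}(x)$, so $\Sigma(f^{-1}(x))=\Delta(\overline{\pi}^{-1}(x))$. Constancy on strata is then immediate.

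Your route through the Desingularization of Fibers theorem breaks down exactly where you anticipate. That theorem produces $\phi\colon\overline{X}\to X$ with $\overline{X}$ smooth and SNC fibers $\overline{X}_x$, but $\overline{X}_x$ lives inside $\overline{X}$, which is birational to the \emph{singular} $X$, not to a smooth ambient for $f^{-1}(x)$. Proposition~\ref{impo3} therefore does not apply. Your proposed fix---lifting the blow-ups to an ambient $W$ and invoking Lemma~\ref{impo} and Corollary~\ref{impo2}---does not go through as written: those results compare exceptional divisors for blow-ups performed on a \emph{smooth} $Z$ inside a smooth $T$, but the first batch of blow-ups in your construction (the Hironaka desingularization of $X$) is performed on the singular $X$, so the hypothesis fails at the outset. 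Even after an embedded desingularization gives smooth $X_0\subset W_0$, you would still need to identify $\overline{X}_x=\phi^{-1}(f^{-1}(x))$ with the proper transform of $f^{-1}(x)$ under some ambient modification of $W$; but $\overline{X}_x$ is a total transform inside $\overline{X}$, not inside $W$, and no such identification is established. The paper sidesteps all of this by principalizing in the smooth ambient from the start, so that the SNC fiber and the object required by Proposition~\ref{impo3} coincide by construction.
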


\begin{proof} $X, Y$ can be embedded as the closed subvarieties of $\bP^n$, and $\bP^m$ respectively. Then the graph of $f:X\to Y$ can be embedded 
into $\bP^n\times \bP^m$ so that we have the embedding $X\to \bP^n\times \bP^m$ defined by the graph and the commutative diagram

$$\begin{array}{rcl} 
X & \stackrel{f}{\to} & Y\\
 \dar & & \dar  \\
\bP^n\times \bP^m& \stackrel{\pi}{\to} &\bP^m
\end{array}$$

Consider the canonical Hironaka principalization $\overline{\sigma}: \overline{Z}\to \bP^n\times \bP^m$ of  the ideal of $X\to \bP^n\times \bP^m$ with respect to the morphism $\pi: \bP^n\times \bP^m\to \bP^m$.
Then the exceptional divisor $D$ maps to $Y$.

For any $x\in X$ the  fiber $f^{-1}(x)\subset X\subset \bP^n\times \bP^m$ we have that
$$\overline{\sigma}^{-1}(f^{-1}(x))=\overline{\pi}^{-1}(x),$$
which implies $$\Sigma(f^{-1}(x))=\Sigma(\overline{\sigma}^{-1}(f^{-1}(x)))=\Sigma(\overline{\pi}^{-1}(x))=(\Delta({\overline{\pi}^{-1}(x)}))$$
and is the same for all points in the same stratum $s$.

\end{proof}

\begin{corollary} {\bf Constructibility of zero Weight Deligne filtration of fibers}
Let $f: X\to Y$ be a projective morphism of complex projective varieties. Then there is a locally closed stratification $S=\{s\}$ of $Y$ such that for any stratum $s$ the zero weight Deligne filtratation $\dim W_0(H^i(f^{-1}(x))$ of the cohomology of the fiber $f^{-1}(x)$, is constant for all $x\in s$. In particular the invariant $\dim W_0H^i(f^{-1}(x))$ is constructible.
\end{corollary}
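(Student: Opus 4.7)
The plan is to deduce this corollary directly from the preceding theorem on constructibility of the combinatorial type of fibers, combined with the identification of $W_0 H^i$ with the cohomology of the combinatorial type established in Section 3.

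First I would invoke the preceding theorem to obtain a locally closed stratification $S = \{s\}$ of $Y$ such that, on each stratum $s$, the combinatorial type $\Sigma(f^{-1}(x))$ is constant (up to homotopy equivalence) for all $x \in s$. In particular, for any such $s$ and any two points $x, x' \in s$, the topological spaces $|\Sigma(f^{-1}(x))|$ and $|\Sigma(f^{-1}(x'))|$ have the same homotopy type, and hence
\[
\dim H^i(\Sigma(f^{-1}(x)), \mathbb{C}) = \dim H^i(\Sigma(f^{-1}(x')), \mathbb{C})
\]
for every $i$.

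Next I would apply the theorem proved at the end of Section 3 (the extension of Deligne's lemma), which asserts that for a complex projective variety $Z$ we have an isomorphism
\[
H^i(\Sigma(Z), \mathbb{C}) \simeq W_0(H^i(Z, \mathbb{C})).
\]
Since $f$ is projective and $X, Y$ are complex projective varieties, each scheme-theoretic fiber $f^{-1}(x)$ is a complex projective scheme, and applying the isomorphism to $Z = f^{-1}(x)$ yields
\[
\dim W_0 H^i(f^{-1}(x), \mathbb{C}) = \dim H^i(\Sigma(f^{-1}(x)), \mathbb{C}).
\]
Combining this with the constancy of the right-hand side on each stratum $s$ gives the desired constancy of $\dim W_0 H^i(f^{-1}(x))$ on each stratum, which is exactly the statement that $x \mapsto \dim W_0 H^i(f^{-1}(x))$ is constructible.

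There is essentially no obstacle here: the main work has already been done in the preceding theorem (which required the full machinery of resolution with SNC fibers and the cone-extension analysis) and in the identification $W_0 H^i \simeq H^i(\Sigma(-))$ from Section 3. The only mild point to check is that the Section 3 theorem, which is stated for projective varieties, applies to each fiber $f^{-1}(x)$ viewed as a complex projective scheme of finite type; this is covered by the combinatorial type formalism, which is defined for arbitrary schemes of finite type and which satisfies the $W_0$-comparison in the projective case.
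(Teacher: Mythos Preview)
Your proposal is correct and is exactly the intended argument: the paper states this corollary immediately after the constructibility-of-combinatorial-type theorem without giving a separate proof, precisely because it follows at once from that theorem together with the identification $W_0 H^i(Z,\CC)\simeq H^i(\Sigma(Z),\CC)$ established in Section~3.
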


\end{document}